\DeclareFontFamily{OT1}{rsfs}{}
\DeclareFontShape{OT1}{rsfs}{n}{it}{<-> rsfs10}{}
\DeclareMathAlphabet{\mathscr}{OT1}{rsfs}{n}{it}
\def\bar{\overline}
\newcommand{\FF}{\mathbb F}
\newcommand{\ZZ}{\mathbb Z}
\newcommand{\RR}{\mathbb R}
\newcommand{\QQ}{\mathbb Q}
\newcommand{\NN}{\mathbb N}
\newcommand{\gs}{\geqslant}
\newcommand{\sm}{\setminus}
\newcommand{\ls}{\leqslant}
\newcommand{\Sec}{\operatorname{Sec}}
\newcommand{\chara}{\operatorname{char}}
\newcommand{\Spec}{\operatorname{Spec}}
\newcommand{\cd}{\operatorname{cd}}
\newcommand{\HH}[3]{\operatorname{H}^{#1}_{#2}(#3)}
\newcommand{\Het}[2]{\operatorname{H}^{#1}_{\mathrm{\acute{e}t}}(#2)}
\newcommand{\Hetl}[3]{\operatorname{H}^{#1}_{\mathrm{\acute{e}t}, #2}(#3)}
\newcommand{\iso}{\cong}
\def\V{\mathcal{V}}
\def\I{\mathcal{I}}
\def\A{\mathfrak{a}}
\def\AA{\mathbb{A}}
\def\B{\mathfrak{b}}
\def\C{\mathfrak{c}}
\def\p{\mathfrak{p}}
\def\L{\Lambda}
\def\Ch{\check{\mcal{C}}}
\def\fm{\mathfrak{m}}
\def\fn{\mathfrak{n}}
\def\mcal{\mathcal}
\newcommand{\kk}{\Bbbk}
\newcommand{\kv}{{\kk[V]}}
\newcommand{\kvv}{{\kk[V^2]}}
\newcommand{\vg}{{V/\! \!/G}}
\newcommand{\kvg}{{\kk[V]^G}}
\newcommand{\svg}{\mathcal{S}_{V,G}}
\newcommand{\diag}{\operatorname{diag}}
\newcommand{\Ll}{\mathcal{L}}
\newcommand{\Ss}{\mathcal{S}}
\newcommand{\supp}{\operatorname{supp}}
\newcommand{\kx}{\kk[x]}
\newcommand{\Nn}{\mathcal{N}}
\newcommand{\proj}{\operatorname{proj}}
\def\PP{\mathbb{P}}
\def\GL{\operatorname{GL}}
\newcommand{\wt}{\mathrm{wt}}
\newcommand{\spn}{\mathrm{span}}
\newcommand{\conv}{\operatorname{conv}}
\newcommand{\convo}{\conv^{\circ}}
\newtheorem{thm}{Theorem}[section]
\newtheorem*{thm*}{Theorem}
\newtheorem{cor}[thm]{Corollary}
\newtheorem*{cor*}{Corollary}
\newtheorem{prop}[thm]{Proposition}
\newtheorem{lem}[thm]{Lemma}
\newenvironment{customthm}[1]
  {\innercustomthm}
  {\endinnercustomthm}
\theoremstyle{definition}
\newtheorem{ques}[thm]{Question}
\newtheorem{conj}[thm]{Conjecture}
\theoremstyle{remark}
\newtheorem{rmk}[thm]{Remark}
\newtheorem{eg}[thm]{Example}
\newcommand{\done}{\hfill $\triangleleft$}
\title[Mapping toric varieties into low dimensional spaces]{Mapping toric varieties into \\ low dimensional spaces}
\author[Emilie Dufresne]{Emilie Dufresne}
\address{School of Mathematical Sciences, University of Nottingham, University Park, Nottingham, NG7 2RD, UK}
\email{emilie.dufresne@nottingham.ac.uk}
\author[Jack Jeffries]{Jack Jeffries}
\address{University of Michigan, Department of Mathematics, 1825 East Hall, 530 Church Street, Ann Arbor, MI 48109-1043, USA}
\email{jackjeff@umich.edu }
\subjclass[2010]{13A50, 13D45, 14M25}
\keywords{Segre-Veronese varieties, Dimension of secant variety, torus invariants, separating invariants, local cohomology}
\begin{document}

\maketitle

\begin{abstract}
A smooth $d$-dimensional projective variety $X$ can always be embedded into $2d+1$-dimensional space. In contrast, a singular variety may require an arbitrary large ambient space. If we relax our requirement and ask only that the map is injective, then any $d$-dimensional projective variety can be mapped injectively to $2d+1$-dimensional projective space. A natural question then arises: what is the minimal $m$ such that a projective variety can be mapped injectively to $m$-dimensional projective space? In this paper we investigate this question for normal toric varieties, with our most complete results being for Segre-Veronese varieties. 
\end{abstract}

\date{\today}

\section{Introduction}

It is well known that a smooth $d$-dimensional projective or affine variety can always be embedded into $\PP^{2d+1}$. This story is different for singular varieties, including affine cones over smooth projective varieties. For example, the affine cone over the $n$th Veronese embedding of $\PP^1$ cannot be embedded in $\AA^m$ for $m<n+1$. In some situations, one may be willing to lose some information and be satisfied with an injective morphism $X \to \PP^m$ or $X\to \AA^m$. 

\begin{ques}\label{q-1}
What is the minimal $m$ such that a projective (affine) variety $X$ can be mapped injectively into projective space (respectively, affine space) of dimension~$m$?
\end{ques}

The bound $2d+1$ holds in general in this setting by the same linear projection argument that yields an embedding of a smooth projective variety into $\PP^{2d+1}$.

Namely, if $X\subseteq \PP^N$ and $p$ is a point in $\PP^N\setminus X$ that does not lie on any secant to $X$, that is on any line that intersects $X$ in at least two points, then the projection from $p$ to a hyperplane will be injective on $X$. One can then repeat this argument until the set of points on secant lines fills the ambient space. The bound $2d+1$ is simply the expected dimension of the secant variety, which is the Zariski closure of the set of points on secant lines. In the affine case, one simply uses the same argument on a projectivization; see ~\cite[Section~5.1]{ed:si} for a similar argument in a special case and \cite[Theorem~5.3]{tk-gk:aitng} for a different argument in a more general context.

Naturally, an absolute lower bound is given by the dimension of $X$. This absolute lower bound is sometimes attained even when $X$ is not itself $\PP^d$ or $\AA^d$, at least in positive characteristic \cite[Example 3.1]{ed:sifrg}. But this is a rare occurence. In characteristic zero, for normal varieties, it does not happen unless $X$ is isomorphic to $\PP^m$ or $\AA^m$, see~\cite[Corollary~4.6]{ed-hk:ismag}. 

In this paper we focus on toric varieties and the affine cones over them. Our most complete results are for Segre-Veronese varieties, a construction simultaneously generalizing Veronese varieties ($r=1$) and Segre varieties (each $a_i=1$). For the affine cone over a Segre-Veronese variety we obtain:
 
 \begin{thm}\label{thm-MinSizeSV}
Let $Y$ be the affine cone over the Segre-Veronese variety $X$ that is the image of the closed embedding $\prod_{i=1}^r \PP^{n_i-1} \hookrightarrow \PP^N$ given by the line bundle $\mcal{O}(a_1,\dots,a_r)$. If $s$ is minimal such that $Y$ can be mapped injectively to $\AA^s$, then
\begin{enumerate}
\item $s=2n_1-1$, if $r=1$ and $a_1$ is not 1 or a power of $\chara \kk$;\label{thm-MinSizeVero}
 \item $s=2(n_1+n_2)-4$, if $r=2$ and $a_1,a_2$ are either 1 or a power of $\chara \kk$;\label{thm-MinSizeSV2}
  \item $s\leq 2\sum_{i=1}^r n_i  - 2r +2$, in general;\label{thm-MinSizeSVupper}
 \item $s\geq 2\sum_{i=1}^r n_i  - 2r +1$, if at least one $a_i$ is not 1 or a power of $\chara \kk$;\label{thm-MinSizeSV1}

 \item $s\geq 2\sum_{i=2}^r n_i  - 2r +4$, if every $a_i$ is either 1 or a power of $\chara \kk$.\label{thm-MinSizeSV3}
\end{enumerate}
\end{thm}

Lower bounds are obtained via techniques better suited to the affine case. If $Y$ is an affine algebraic variety, having an injective morphism $Y\to \AA^m$ corresponds to having a \emph{separating set} $E=\{f_1,\ldots,f_m\}\subseteq \kk[Y]$. The notion of separating set is usually defined relatively to a larger ring of functions $R\supset \kk[Y]$  and in a much more general context, for ring of functions on a set, see \cite[Definition 1.1]{gk:si}. This variant comes up in many different contexts under different names (see for example the introduction of \cite{eds:fdepeei}). 

For the affine cone $Y$ over a Segre-Veronese variety,  we have a surjective map 
$\AA^d\to Y$ where $d={\sum_{i=1}^r n_i}$, dual to the inclusion map of the ring of regular functions on $Y$ into a polynomial ring in $d$ variables. A consequence is that the morphism ${Y\to \AA^m}$ is injective if and only if the natural inclusion of the reduced fiber products 
\[\AA^d \times_Y \AA^d \subseteq \AA^d \times_{\AA^m} \AA^d\]
 is an isomorphism. As $ \AA^d \times_{\AA^m} \AA^{d}$ is the zero set in $ \AA^{2d}$ of 
 \[(f_i\otimes 1-1\otimes f_i\mid i=1,\ldots,m)\]
  it follows that the \emph{arithmetic rank} of the defining ideal of $\AA^d \times_Y \AA^d$, that is the minimal number of generators up to radical, is a lower bound for the minimal $m$ such that $Y$ can be mapped injectively to $\AA^m$ (cf  \cite[Section~3]{ed-jj:silc}). Our arguments exploit the fact that the affine cone over a Segre-Veronese variety is isomorphic to $\vg:=\Spec(\kvg)$, where $V$ is a representation of an algebraic group $G$ and $\kvg:=\{f\in \kv \mid f(u)=f(\sigma\cdot u), \text{ for all }u\in V,~\sigma\in G\}$. Accordingly, most of this paper will be written from that point of view.

If we assume $G$ is reductive (and so the quotient morphism $V\to \vg$ is surjective), then having an injective morphism $\Spec(\kvg)=:\vg\to \AA^m$ corresponds to having a separating set in the sense of  \cite[Section 2.3.2]{hd-gk:cit}, that is, a set $E$ of invariants such that whenever two points of $V$ can be separated by some invariant, they can be separated by an element of $E$.

In this setting, the fibre product $V\times_{\vg}V$ is called the \emph{separating variety} and denoted by $\svg$. The key observation of \cite[Section~3]{ed-jj:silc} is that the minimal cardinality of a separating set is bounded below by the arithmetic rank of the defining ideal of $\svg$. For representations of finite groups the arithmetic rank of the defining ideal of the separating variety ends up being meaningful in terms of the geometry of the representation (see \cite{ed-jj:silc}). As in \cite{ed-jj:silc}, we use the nonvanishing of local cohomology modules to find lower bounds for the arithmetic rank of the defining ideal of the separating variety. For Segre varieties with two factors, this is not conclusive in positive characteristic, and so instead we must use \'etale cohomology. Following \cite{ed-jj:silc}, the general strategy is to decompose the separating variety as a union of simpler objects. The difficulty is, unlike for representations of finite groups, the separating variety is not simply an arrangement of linear subspaces.

Linear projections are often sufficient to reach the minimal $m$ such that a projective variety can be mapped injectively to $\PP^m$. In the case of toric varieties, the image will often no longer be toric. A natural question is the following:

\begin{ques}\label{min-m-mon}
What is the minimal $m$ such that a projective toric variety is mapped injectively to $\PP^m$ so that the image is also a toric variety? Equivalently, what is the minimal cardinality of a monomial separating set for the affine cone over a toric variety?
\end{ques}

In this paper we address this question for normal affine toric varieties. These include the affine cones over Segre-Veronese varieties. The normality assumption ensures that the ring can be identified with the ring of invariants of a representation of an algebraic torus, which provides extra structure. Indeed, the rings of invariants for representations of tori are determined by the combinatorics and convex geometry of the weights. In Proposition~\ref{prop-SVminmon}, we determine the answer to Question~\ref{min-m-mon} for Segre-Veronese varieties; as a consequence, in Corollary~\ref{cor-sparse} we give a bound on the sparsity of a separating set for the associated torus action.

In general, the minimal size of a monomial separating set is much larger than the minimal size of a separating set, but it is often still smaller than the size of a minimal generating set for the ring of invariants. Inspired by \cite{md-es:hdag}, we show the following.

\begin{customthm}{\ref{thm-2r+1}}
  Let $V$ be a $n$ dimensional representation of a torus $T$ of rank $r\ls n$. The invariants involving at most $2r+1$ variables form a separating set.
\end{customthm}

The remainder of the paper is  organized as follows. In Section~\ref{section-setup} we describe the combinatorial set-up and notation we need in order to discuss linear representations of tori, including giving an explicit link between representations of tori and Segre-Veronese varieties. In Section~\ref{section-SepVar} we first give some general results about the decomposition of the separating variety for representation of tori, before giving more explicit results in the case of Segre-Veronese varieties. Section~\ref{section-UpperBounds} focuses on upper bounds on the size of separatating sets and Section~\ref{section-LowerBounds} on the lower bounds. Finally, in Section~\ref{section-MonomialSepSets} we consider monomial separating sets. As well as the results mentioned above, we give a combinatorial characterization of monomial separating sets for representations of tori.


\section{Set up and Notation}\label{section-setup}

We work over an algebraically closed field $\kk$ of arbitrary characteristic. The characteristic will sometimes make a difference. We consider a $n$-dimensional representation $V$ of a torus $T$ of rank $r$. Without loss of generality we can assume this is given by the weights $m_1,\ldots,m_n\in \ZZ^r$. That is,  with respect to the basis $\{b_1,\dots ,b_n\}$ of $V$, the action of $T$ on $V$ is given by $t\mapsto \diag(t^{-m_1},\ldots,t^{-m_n})$, where $t$ denotes the element $(t_1,\ldots,t_r)$ of $T$, and $t^{-m_i}$ denotes the element $t_1^{-m_{i,1}}\cdots t_r^{-m_{i,r}}$ of $\kk$. Let $A$ be the matrix whose columns are the $m_i$'s. We will assume throughout that $A$ has full rank $r\ls n$. We will use $\{e_1,\dots,e_r\}$ to denote the standard basis of $\ZZ^r$.

We will write $\kk[x]$ to denote $\kk[x_1,\ldots,x_n]=\kk[V]$, where $\{x_1,\dots,x_n\}$ is the basis  of the 1-forms of $\kk[V]$  dual to the basis $\{b_1,\dots ,b_n\}$ of $V$. In terms of the basis $\{x_1,\dots,x_n\}$, the representation of $T$ takes the form $t\mapsto \diag(t^{m_1},\ldots,t^{m_n})$.

The ring of invariants $\kv^T$ is the monomial algebra given by the semigroup $\Ll:=\ker_{\ZZ}A\cap \NN^n$, that is
\[\kv^T=\spn_\kk\{x^\alpha \mid \alpha\in \Ll\}.\]
The field of rational invariants is similarly given by $\ker_\ZZ A$:
\[\kk(V)^T=\spn_\kk\{x^\beta \mid \beta \in \ker_{\ZZ} A\}.\]
For a natural number $a\in\NN$, we write $[a]$ to denote the set $\{1,\ldots,a\}$. For $I\subseteq [n]$, we set $V_I=\spn_\kk\{b_i \mid i\in I\}$, 
\[\ker_{\ZZ} A_I:=\{\beta\in \ker_{\ZZ} A \mid \beta_j=0 \ \text{for} \  j\notin I\}\,,\]
 and $\Ll_I=\ker_\ZZ A_I \cap \Ll$.
 
 For $u=\sum u_i b_i\in V$, we define $\supp(u):=\{i\in [n]\mid u_i\neq 0\}$ and similarly for $\beta\in \ZZ^n$, ${\supp(\beta):=\{i\in [n]\mid \beta_i\neq 0\}}$. For $I\subseteq \{1,\ldots, n\}$, we define the \emph{weight set} of $I$ to be ${\wt(I):=\{m_i\mid i\in I\}}$, and for $u\in V$ and $\alpha \in \ZZ^n$, we write $\wt(u):=\wt(\supp(u))$ and ${\wt(\alpha):=\wt(\supp(\alpha))}$. Further, we will write $\conv(I)$, respectively $\convo(I)$, to denote the convex hull of $\wt(I)$, respectively the relative interior of the convex hull of $\wt(I)$, which is the interior of $\wt(I)$ with respect to the usual metric topology on the linear span of $\wt(I)\subseteq \RR^n$.
 
 
\subsection{Segre-Veronese varieties}\label{section-SV}

We will pay particular attention to the affine cones over Segre-Veronese varieties.

A Segre-Veronese variety is the image of the closed embedding
\[{ {\prod_{i=1}^r \PP^{n_i-1} \hookrightarrow \PP^N}} \qquad \text{for} \ {N=1+\prod_{i=1}^r \binom{n_i+a_i-1}{a_i}}\]
 given by the line bundle $\mcal{O}(a_1,\dots,a_r)$ for some tuple $(a_1,\dots,a_r) \in \NN^r$. Its ring of homogeneous coordinates is
\[S=\kk\big[\ M_1 \cdots M_r \ \big| \ M_i \ \text{is a monomial of degree $a_i$ in the variables} \  x_{i1}, \dots , x_{i n_i} \big]\,.\]
This construction simultaneously generalizes Veronese varieties ($r=1$) and 
Segre varieties (every $a_i=1$).

 If every $a_i$ is coprime to the characteristic of $\kk$, the homogeneous coordinate ring $S$ of the Segre-Veronese is the ring of invariants of the polynomial ring 
 \[R=\kk[x_{i \ell} \ | \ 1 \ls i \ls r, 1 \ls \ell \ls n_i]\,,\]
  under the linear action of a diagonalizable group, given as the product of a torus $T$ of rank $r-1$ acting with weights
  \[m_{i\ell}=\left\{\begin{array}{cl} e_i               & i=1,\ldots,r-1\\
                                              -\sum_{i=1}^{r-1} e_i & i=r
                            \end{array}\right.\,,\]
 and a product of cyclotomic groups $\mu_{a_1} \times \cdots \times \mu_{a_r}=:H$, where the $i$-th factor acts on $x_{i\ell}$ by scalar multiplication. That is, setting $W$ to be the $(\sum_{i=1}^r n_i)$-dimensional $\kk$-vector space dual to the space of 1-forms of $R$, the ring of homogeneous coordinates of the nonmodular Segre-Veronese variety $S$ can be identified with the ring of invariants $R^G=\kk[W]^G$, where $G=T\times H$ acts as described above.

The ring $S$ can also be obtained over any field, up to isomorphism, as the ring of invariants of a representation of a torus. Precisely, set
\[S'=\kk\big[\ x_0 M_1 \cdots M_r \ \big| \ M_i \, \text{is a monomial of degree $a_i$ in the variables} \  x_{i1}, \dots , x_{i n_i} \big]\,,\]
and $R'=\kk[x_0 , x_{i \ell} \ | \ 1 \ls i \ls r, 1 \ls \ell \ls n_i]=\kk[W']$ where $W'$ is the $(1+\sum_{i=1}^r n_i)$-dimensional $\kk$-vector space dual to the space of 1-forms of $R'$. Then $S'$ is the ring of invariants of $R'=\kk[W']$ under the action of a rank $r$ torus with weights $m_0= -\sum_{i=1}^r a_i e_i$ and $m_{i\ell} = e_i$. As both groups are reductive, the isomorphism $S\cong S'$ ensures that finding  separating sets in $S$ and $S'$ is exactly the same, and so any bound established for one holds for the other. This follows from the following key fact: if $G$ is reductive, $E\subseteq \kvg$ is a separating set if and only if the morphism $\vg\to\Spec(\kk[E])$ induced by the inclusion $\kk[E]\subseteq \kvg$ is injective (cf \cite[Theorem~2.2]{ed:sifrg}).

In the following lemma, we set $\operatorname{s}(a_1,\dots,a_r)$ to be the smallest cardinality of a separating set for a representation of a torus with ring of invariants isomorphic to the homogeneous coordinate ring of the Segre-Veronese variety $X$, where $X$ is the image of the closed embedding $\prod_{i=1}^r \PP^{n_i-1} \hookrightarrow \PP^N$ given by the line bundle $\mcal{O}(a_1,\dots,a_r)$.

\begin{lem} \label{lem-nonmodSV}
Let $\kk$ be a field of positive characteristic $p$, and fix $a_1,\dots , a_n$. Write each $a_i=a_i' p^{c_i}$ so that $\operatorname{gcd}(a_i',p)=1$. Then $\operatorname{s}(a_1,\dots,a_r)=\operatorname{s}(a'_1,\dots,a'_r)$.
\end{lem}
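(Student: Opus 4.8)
The plan is to show that for each $i$, replacing $a_i$ by $a_i' p^{e_i} \mapsto a_i'$ does not change the isomorphism class of the ring of invariants presented as a torus invariant ring, so the conclusion follows by induction on $\sum e_i$. Thus it suffices to treat the case of a single exponent: say $a_1 = a_1' p^{e}$ with $e \geq 1$ and $\gcd(a_1', p) = 1$, and all other $a_i$ fixed; I will show $\operatorname{s}(a_1' p^e, a_2, \dots, a_r) = \operatorname{s}(a_1', a_2, \dots, a_r)$.

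First I would pass to the presentation $S' = \kk[\,x_0 M_1 \cdots M_r\,]$ from the text, which realizes the homogeneous coordinate ring of the Segre-Veronese variety as the invariant ring of a rank $r$ torus acting on $R' = \kk[x_0, x_{i\ell}]$ with weights $m_0 = -\sum_{i=1}^r a_i e_i$ and $m_{i\ell} = e_i$. By the key fact recalled in the excerpt (for reductive $G$, $E$ is separating iff $\vg \to \Spec(\kk[E])$ is injective, cf.\ \cite[Theorem~2.2]{ed:sifrg}), the minimal separating set size $\operatorname{s}$ depends only on the isomorphism class of this ring, equivalently only on the affine semigroup $\Ll = \ker_\ZZ A \cap \NN^{N+1}$ of exponent vectors. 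So the task reduces to exhibiting an isomorphism of semigroups (equivalently of monomial algebras) between the $(a_1' p^e, a_2, \dots, a_r)$ case and the $(a_1', a_2, \dots, a_r)$ case. The generators of $S'$ are indexed by tuples $(M_1, \dots, M_r)$ with $M_i$ a degree-$a_i$ monomial in $x_{i1}, \dots, x_{in_i}$; I would write down the monomial in $R'$ attached to each generator and compute the lattice of relations among them. The point is that the characteristic-$p$ Frobenius-type substitution $x_{1\ell} \mapsto x_{1\ell}^{1/p^e}$ — made precise at the level of exponent lattices rather than literally — identifies degree-$a_1' p^e$ monomials in the $x_{1\ell}$ with degree-$a_1'$ monomials (as $p^e$-th powers), and this is a bijection compatible with the defining lattice $\ker_\ZZ A$: multiplying the first block of weights by $p^e$ rescales part of the lattice in a way that does not change its saturation structure, hence does not change the associated normal affine monoid up to isomorphism. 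Concretely, I expect that the map sending the generator $x_0 M_1 M_2 \cdots M_r$ (with $M_1$ of degree $a_1' p^e$, necessarily of the form $N_1^{p^e}$ times a "non-$p^e$-th-power correction" that actually cannot occur once one sorts out the combinatorics — see below) to the corresponding generator in the $a_1'$-presentation induces a $\kk$-algebra isomorphism $S'_{(a_1'p^e,\dots)} \cong S'_{(a_1',\dots)}$.

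The main obstacle — and the place where characteristic $p$ genuinely enters rather than being cosmetic — is that a degree-$a_1' p^e$ monomial in $x_{11}, \dots, x_{1n_1}$ need not be a $p^e$-th power, so the naive substitution is not surjective onto generators. The resolution is that the ring $S'$ (or $S$) is generated as a $\kk$-algebra by the products $x_0 M_1 \cdots M_r$, but one only needs the *semigroup generated by the exponent vectors* to match, and two different monomial algebras can have isomorphic normalizations / isomorphic invariant rings even if their naive monomial generators differ. More precisely, the relevant statement is that the ring of invariants does not see the individual weights $m_i$ but only the lattice $\ker_\ZZ A$ together with the positive orthant; scaling a subset of weights by $p^e$ corresponds, after an $\Aut(\ZZ^r)$-change of coordinates is ruled out, to replacing the toric data by an "index $p^e$" sublattice, and the crucial computation is that the resulting normal affine toric variety is isomorphic (not merely isogenous) to the original. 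I would verify this by a direct comparison of the two semigroup rings: exhibit the explicit $\kk$-algebra map on generators, check it is well-defined (relations map to relations — a finite check on the binomial relations of the Segre-Veronese ideal), check it is surjective (every generator downstairs is hit, using that $a_1'$ is coprime to $p$ so that reduction of exponents mod $p^e$ behaves well), and check injectivity by a dimension/domain argument or by exhibiting the inverse. The coprimality $\gcd(a_i', p) = 1$ is exactly what makes the exponent bookkeeping in the first block consistent. Once the isomorphism $S'_{(a_1'p^e, a_2, \dots, a_r)} \cong S'_{(a_1', a_2, \dots, a_r)}$ is established, the equality $\operatorname{s}(a_1' p^e, a_2, \dots, a_r) = \operatorname{s}(a_1', a_2, \dots, a_r)$ is immediate from the definition of $\operatorname{s}$ as an isomorphism invariant, and iterating over $i = 1, \dots, r$ gives $\operatorname{s}(a_1, \dots, a_r) = \operatorname{s}(a_1', \dots, a_r)$.
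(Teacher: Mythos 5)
Your reduction to changing one $a_i$ at a time is fine, and your observation that $\operatorname{s}$ is an isomorphism invariant of the invariant ring is correct; but the central claim you hang everything on --- that $S'_{(a_1'p^e,a_2,\dots,a_r)}$ and $S'_{(a_1',a_2,\dots,a_r)}$ are isomorphic $\kk$-algebras, because rescaling a block of weights by $p^e$ ``does not change the associated normal affine monoid up to isomorphism'' --- is false, and this is a genuine gap rather than a fixable detail. Already in the simplest Veronese case it fails: take $r=1$, $n_1=2$, $\chara\kk=p$, $a_1=p$, $a_1'=1$. The ring for $a_1'=1$ is a polynomial ring in two variables, while the ring for $a_1=p$ is the $p$-th Veronese of $\kk[x_{11},x_{12}]$, which has $p+1$ minimal generators and a singular vertex; these are not isomorphic (compare embedding dimensions, or regular versus singular). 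In general, multiplying a weight block by $p^e$ replaces the semigroup $\ker_\ZZ A\cap\NN^n$ by a genuinely different one, and the two toric varieties are related only by a finite purely inseparable, bijective morphism (a universal homeomorphism), not by an isomorphism. The issue you flagged as the ``main obstacle'' (degree-$a_1'p^e$ monomials need not be $p^e$-th powers) is exactly the point, and it cannot be argued away: the extra generators are really there and change the ring.

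The content of the lemma is precisely that $\operatorname{s}$ is insensitive to such purely inseparable bijections, and that is how the paper proves it. Writing $R_1$ for the ring with exponent $a_t$, $R_2$ for the ring with exponent $pa_t$, and $R_3\subseteq R_2$ for the image of $R_1$ under the Frobenius-type substitution $x_{t,\ell}\mapsto x_{t,\ell}^p$ (so $R_3\cong R_1$ and $R_2^{\,p}\subseteq R_3$), one transfers separating sets in both directions: a separating set for $R_1$ gives one for $R_3$, hence for $R_2$, because $\Spec(R_2)\to\Spec(R_3)$ is injective (purely inseparable); conversely, raising a separating set for $R_2$ to the $p$-th power lands it in $R_3$ and it still separates, since $c\mapsto c^p$ is injective on $\kk$. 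If you want to salvage your write-up, you should replace the claimed isomorphism by this two-way comparison through the Frobenius copy $R_3$; no statement about isomorphism of the semigroup rings (or of their normalizations --- both rings here are already normal and still non-isomorphic) can be correct.
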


\begin{proof}
We will show that $\operatorname{s}(a_1,\dots,a_t)=\operatorname{s}(a_1,\dots,a_{t-1},p a_t)$, and the claim follows.

Let $R_1$ be the ring of functions on the affine cone over the Segre-Veronese variety given by the line bundle $\mcal{O}(a_1,...,a_t)$, $R_2$ be the  ring of functions on the affine cone over the Segre-Veronese variety given by the line bundle $\mcal{O}(a_1,\ldots,a_{t-1},p a_t)$, and $R_3$ be the image of $R_1$ under the map that sends the set of variables ($x_{t,1},\ldots,x_{t,n_t}$) to their $p$th powers. Note that $R_3$ is isomorphic to $R_1$.

Suppose that $\{g_1,\ldots,g_s\}\subset R_1$ form a separating set in $R_1$. Then their images $\{g''_1,\ldots,g''_s\}$ under the map giving the isomorphism $R_1\cong R_3$ form a separating set for $R_2$. Indeed, it will be a separating set for $R_3$ (applying the isomorphism), and the morphism $\Spec(R_2)\to \Spec (R_3)$ induced by the inclusion $R_3\subseteq R_2$ is injective. It follows that an upper bound on the minimal size of separating sets for $R_1$ is also an upper bound for $R_2$.

Since $(R_2)^{p} \subseteq R_3$, taking a separating set for $R_2$ and taking ${p}$th powers produces a separating set for $R_3$ of the same cardinality. Then, applying the isomorphism $R_1\cong R_3$, we get a separating set for $R_1$ of the same size. Therefore, the minimal size of a separating set in $R_1$ is a lower bound for the minimal size of a separating set in $R_2$, completing the proof of the claim.
\end{proof}




\section{The separating variety}\label{section-SepVar}

In this section we describe the separating variety for representations of tori. The \emph{separating variety} $\svg$ is a closed subvariety of $V\times V$ that encodes which points can be separated by invariants. Namely,
\[\Ss_{V,G}:=\{(u,v)\in V\times V \mid f(u)=f(v),~\forall f\in\kvg\}\,.\]
It deserves its name because it characterizes separating sets. Indeed, $E\subseteq \kvg$ is a separating set if and only if 
\[\V_{V\times V}(f\otimes 1-1\otimes f \mid f\in E)=\Ss_{V,G}\] 
(see \cite[Section 2]{gk:cirgpc}). In particular, the defining ideal for the separating variety is the radical of the \emph{separating ideal}
\[ \I_{V,G}:=(f \otimes 1 - 1 \otimes f \mid f\in \kvg)\,.\]
Of course, the separating variety always contains the graph of the action 
\[ \Gamma_{V,G}:=\{(u,\sigma\cdot u)\mid u\in V,\sigma\in G\}\,,\] 
and its Zariski closure $\overline{\Gamma_{V,G}}$.

Let $V$ be a linear representation of an algebraic group $G$. Let $\pi\colon V\to \vg$ be the morphism corresponding to the inclusion $\kvg\subseteq \kv$. The \emph{nullcone} $\Nn_V$ is defined as $\Nn_V:=\pi^{-1}(\pi(0))$. It coincides with the set $\V_V(f\mid f\in \kv^{G}_+)$ of common zeroes in $V$ of all nonconstant homogeneous invariants. Naturally, the product $\Nn_{V,G}\times\Nn_{V,G}$ is always contained in the separating variety. For a representation of a torus, the nullcone can be described in terms of the geometry of the weights of the action:

\begin{lem}[{see for example \cite[Proposition 4.4]{dlw:wirtipr}}] \label{lem-nullcone}
Let $V$ be a representation of a torus $T$. The nullcone is an arrangement of linear subspaces and its decomposition as irreducibles is as follows:
\[\Nn_{V,T}=\bigcup_{\tiny{\begin{array}{c} I \text{ maximal s.\,t.}\\ 0\notin \conv(\wt(I))\end{array}}} V_I.\]
\end{lem}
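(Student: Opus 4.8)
The plan is to unwind the description $\Nn_{V,T}=\V_V(f\mid f\in\kk[V]^T_+)$ in terms of invariant monomials and then translate the resulting support condition into convex geometry. Recall from Section~\ref{section-setup} that $\kk[V]^T$ is spanned by the monomials $x^\alpha$ with $\alpha\in\Ll=\ker_\ZZ A\cap\NN^n$, so $\kk[V]^T_+$ is spanned by those with $\alpha\neq 0$. Since each such $\alpha$ is nonnegative, $x^\alpha(u)=\prod_i u_i^{\alpha_i}\neq 0$ precisely when $\supp(\alpha)\subseteq\supp(u)$. Hence $u\in\Nn_{V,T}$ if and only if there is no nonzero $\alpha\in\Ll$ with $\supp(\alpha)\subseteq\supp(u)$; equivalently, $\Ll_{\supp(u)}=\{\zero\}$.

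The key combinatorial step I would isolate as a sublemma: for $I\subseteq[n]$, there is a nonzero $\alpha\in\NN^n$ with $A\alpha=\zero$ and $\supp(\alpha)\subseteq I$ if and only if $\zero\in\conv(\wt(I))$. One direction is immediate: such an $\alpha$ gives $\sum_{i\in I}\alpha_i m_i=\zero$ with the $\alpha_i\gs 0$ not all zero, and dividing by $\sum_i\alpha_i>0$ exhibits $\zero$ as a convex combination of $\wt(I)$. Conversely, if $\zero\in\conv(\wt(I))$, then the cone $\{\beta\in\RR^I_{\gs 0}\mid\sum_{i\in I}\beta_i m_i=\zero\}$ contains a nonzero point; being cut out by equalities and inequalities with integer coefficients, it then contains a nonzero rational point, and clearing denominators produces the required $\alpha$. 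This rationality argument for rational polyhedral cones is the only point needing care, and it is entirely standard.

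Combining the two observations, $u\in\Nn_{V,T}$ if and only if $\zero\notin\conv(\wt(\supp(u)))$. Since $\conv(\wt(J))\subseteq\conv(\wt(I))$ whenever $J\subseteq I$, the linear subspace $V_I$ lies in $\Nn_{V,T}$ exactly when $\zero\notin\conv(\wt(I))$: if so, every $u$ with $\supp(u)\subseteq I$ has $\zero\notin\conv(\wt(\supp(u)))$; conversely the point of $V_I$ with all coordinates indexed by $I$ equal to $1$ has support $I$. Therefore
\[\Nn_{V,T}=\bigcup_{\zero\notin\conv(\wt(I))}V_I=\bigcup_{\substack{I\text{ maximal s.t.}\\ \zero\notin\conv(\wt(I))}}V_I,\]
the second equality because $V_J\subseteq V_I$ for $J\subseteq I$. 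Finally, each $V_I$ is a linear subspace, hence irreducible, and by maximality no member of this union is contained in another ($V_I\subseteq V_{I'}$ forces $I\subseteq I'$), so this is precisely the decomposition into irreducible components.
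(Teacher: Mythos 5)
Your proof is correct and complete. Note that the paper does not actually prove this lemma --- it is quoted with a citation to \cite[Proposition 4.4]{dlw:wirtipr} --- so there is no in-paper argument to compare against; your job here was to supply the standard proof, and you have done so. The chain of reductions is sound: $\Nn_{V,T}=\V_V(f\mid f\in\kk[V]^T_+)$ together with the monomial spanning of $\kk[V]^T$ reduces everything to the support condition $\Ll_{\supp(u)}=\{0\}$, and your sublemma (nonzero $\alpha\in\ker_\ZZ A\cap\NN^n$ supported in $I$ exists iff $0\in\conv(\wt(I))$) is exactly the right combinatorial pivot; the rationality step for the polyhedral cone is the one place needing care and you flag and handle it correctly. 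The only stylistic remark: the paper's surrounding arguments (e.g.\ the proof of Lemma~\ref{lem-NullconeGraph}) view the nullcone through the Hilbert--Mumford criterion, characterizing membership by the existence of $\delta\in\ZZ^r$ with $\delta\cdot m_i>0$ for all $i\in\supp(u)$, which is then equivalent to $0\notin\conv(\wt(u))$ by a separation/Gordan-type argument; your route through invariant monomials is more elementary and avoids Hilbert--Mumford entirely, at the cost of not producing the destabilizing one-parameter subgroup that the later lemmas reuse. Either way, the statement is established.
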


We obtain a first coarse decomposition of the separating variety:

\begin{prop}\label{prop-SepVarDecompGen}
 Let $V$ be a representation of a torus of rank $r$ and suppose the matrix of weights has rank $r$. Then the separating variety can be written as
\[\Ss_{V,T}=\overline{\Gamma_{V,T}}\ \bigcup\ (\Nn_{V,T}\times \Nn_{V,T})  \ \bigcup_{K,I,J} \ \Gamma_{V_K,T} \oplus (V_{I}\times V_{J})\,,\]
where the second union ranges over all $K$ with $0\in\convo(\wt(K))$ and all $I,J\subseteq \{1,\ldots, n\}\setminus K$ such that $ 0\notin \convo(\wt(K\cup I))$ and $0\notin\convo(\wt(K\cup J))$.
\end{prop}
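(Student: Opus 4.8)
The plan is to show both inclusions. The inclusion "$\supseteq$" is the easy direction: $\overline{\Gamma_{V,T}}$ and $\Nn_{V,T}\times\Nn_{V,T}$ both lie in $\Ss_{V,T}$ as remarked before the statement. For a piece $\Gamma_{V_K,T}\oplus(V_I\times V_J)$ with $K,I,J$ as described, write a point as $(u,v)$ with $u = u_K + u_I$, $v = \sigma\cdot u_K + v_J$ where $\supp(u_K),\supp(v_K)\subseteq K$, $\supp(u_I)\subseteq I$, $\supp(v_J)\subseteq J$, and $\sigma\in T$. Let $f=x^\alpha$ be a monomial invariant, so $\alpha\in\Ll=\ker_\ZZ A\cap\NN^n$. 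If $\supp(\alpha)$ meets $I$, then since $0\notin\convo(\wt(K\cup I))$ there is a linear functional separating $0$ from $\wt(K\cup I)$, forcing some variable $x_i$ with $i\in\supp(\alpha)\cap(K\cup I)$ to... more carefully: I would argue that any monomial invariant $x^\alpha$ with $\supp(\alpha)\subseteq K\cup I$ must actually have $\supp(\alpha)\subseteq K$, because $0\notin\convo(\wt(K\cup I))$ means every lattice point of $\ker_\ZZ A$ supported on $K\cup I$ is already supported on the face $K$ (this is the same convexity fact underlying Lemma~\ref{lem-nullcone}: a nonnegative combination of the weights summing to zero must use only weights lying in the minimal face of $\conv(\wt(K\cup I))$ containing $0$, and that face is contained in $\conv(\wt(K))$ since $0$ is in the relative interior there). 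Hence $f(u)=f(u_K)$, and similarly $f(v)=f(\sigma\cdot u_K)=f(u_K)$ since $f$ is invariant; so $f(u)=f(v)$, and $(u,v)\in\Ss_{V,T}$. As $\Ss_{V,T}$ is cut out by invariant monomials, this suffices.

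For the inclusion "$\subseteq$", take $(u,v)\in\Ss_{V,T}$. Set $I=\supp(u)\setminus\supp(v)$, $J=\supp(v)\setminus\supp(u)$, and $K=\supp(u)\cap\supp(v)$. The key point is that $f(u)=f(v)$ for all monomial invariants $f=x^\alpha$. I want to show: (a) $0\notin\convo(\wt(K\cup I))$ and $0\notin\convo(\wt(K\cup J))$, unless we instead land in the nullcone piece; (b) $0\in\convo(\wt(K))$ or else $(u,v)$ lies in one of the degenerate pieces already accounted for; and (c) the $K$-components of $u$ and $v$ differ by a torus element, i.e.\ $(u_K,v_K)\in\overline{\Gamma_{V_K,T}}$ — in fact $(u_K,v_K)\in\Gamma_{V_K,T}$ since $\supp(u_K)=\supp(v_K)=K$ and both have all coordinates nonzero there, so one checks the ratios $u_i/v_i$ for $i\in K$ are realized by a single torus element, which follows because for every $\alpha\in\ker_\ZZ A$ supported on $K$ we get $\prod_{i\in K}(u_i/v_i)^{\alpha_i}=1$ (from $x^\alpha$ being a rational invariant, reached by clearing denominators into genuine monomial invariants and using $u_i,v_i\neq 0$), so $(u_i/v_i)_{i\in K}$ is annihilated by $\ker_\ZZ A_K$ and hence lies in the image of the torus. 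For (a): if, say, $0\in\convo(\wt(K\cup I))$, then by Lemma~\ref{lem-nullcone}-type reasoning there is a monomial invariant $x^\alpha$ with $\supp(\alpha)\subseteq K\cup I$ and $\supp(\alpha)\cap I\neq\emptyset$, whence $f(v)=0$ (as $v$ vanishes on $I$) but $f(u)\neq 0$ (all of $u$'s coordinates on $K\cup I\subseteq\supp(u)$ are nonzero), contradicting $(u,v)\in\Ss_{V,T}$; the symmetric argument handles $K\cup J$. For (b): if $0\notin\convo(\wt(K))$, I claim $u\in\Nn_{V,T}$: extend a separating functional for $\wt(K)$; actually the cleaner route is that $K\cup I$ is then a set with $0\notin\conv$ (enlarge to a maximal such set) so $u\in V_{K\cup I}\subseteq\Nn_{V,T}$ by Lemma~\ref{lem-nullcone}, and likewise $v\in\Nn_{V,T}$, landing us in the second piece. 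Otherwise $0\in\convo(\wt(K))$, and combining (a), (b), (c) places $(u,v)$ in $\Gamma_{V_K,T}\oplus(V_I\times V_J)$. One should also note that when $I=J=\emptyset$ this piece degenerates into $\overline{\Gamma_{V,T}}$ (or rather $\Gamma_{V_K,T}$ with $K=\supp(u)=\supp(v)$), which is why the first term of the union is listed separately and the ranging conditions on $I,J$ allow emptiness.

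The main obstacle I anticipate is the careful convex-geometric bookkeeping in steps (a) and (b): precisely extracting, from the condition $0\in\convo(\wt(S))$, an honest \emph{lattice} point $\alpha\in\Ll$ with prescribed support behavior (rather than just a real convex combination), and conversely showing that $0\notin\convo(\wt(K\cup I))$ forces every invariant monomial supported on $K\cup I$ to be supported on $K$. Both rest on the standard fact that the minimal face of a rational polyhedral cone containing a given rational point in its relative interior is spanned by a sub-multiset of the generators, together with rationality/denominator-clearing to pass between $\ker_\ZZ A$ and $\Ll$; the excerpt's Lemma~\ref{lem-nullcone} is exactly the prototype, and I would cite it and \cite{dlw:wirtipr} for the underlying combinatorics. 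The remaining delicacy is making sure the three stated families of pieces are exhaustive — i.e.\ that the trichotomy "$0\in\convo(\wt(K))$ with nonempty or empty $I,J$" versus "$0\notin\convo(\wt(K))$" really covers every $(u,v)\in\Ss_{V,T}$ — which is a finite case check once (a)–(c) are in hand.
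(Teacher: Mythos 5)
Your two-inclusion plan matches the paper's, and the ``$\supseteq$'' half follows the same route (test an invariant monomial $x^\alpha$ and argue that either $\supp(\alpha)\subseteq K$ or $x^\alpha$ vanishes on both coordinates). However, your justification of that dichotomy is backwards: if $F$ is the minimal face of $\conv(\wt(K\cup I))$ containing $0$, then $0\in\convo(\wt(K))$ gives $\conv(\wt(K))\subseteq F$, not $F\subseteq\conv(\wt(K))$. Weights indexed by $I$ may also lie on $F$, and then invariant monomials supported in $K\cup I$ but not in $K$ do exist: for $m_1=(1,0)$, $m_2=(-1,0)$, $m_3=(2,0)$, $m_4=(0,1)$ with $K=\{1,2\}$, $I=\{3,4\}$, $J=\{4\}$, the stated ranging conditions hold, yet $x_2^2x_3$ is invariant and separates the point $\bigl((1,1,1,0),(1,1,0,0)\bigr)$ of $\Gamma_{V_K,T}\oplus(V_I\times V_J)$. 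So this step genuinely requires the stronger condition that $0\notin\convo(\wt(K\cup I'))$ for every nonempty $I'\subseteq I$ (equivalently, that no weight of $I$ lies on $F$), and likewise for $J$. To be fair, the paper's own proof asserts the same dichotomy in one line, so here you have reproduced, rather than repaired, the delicate point; but as written your face argument does not establish it.

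The serious divergence is in ``$\subseteq$'', and there the gaps are fatal as written. The paper never takes $K=\supp(u)\cap\supp(v)$: it uses reductivity to convert $(u,v)\in\Ss_{V,T}$ into $\overline{Tu}\cap\overline{Tv}\neq\emptyset$, takes the point $z$ on the unique closed orbit in both closures, sets $K=\supp(z)$ (so $0\in\convo(\wt(K))$ by Lemma~\ref{lem-closedorbit}), and obtains $u=t_1\cdot z+w_1$, $v=t_2\cdot z+w_2$ with $w_1,w_2\in\Nn_{V,T}$ via the extended Hilbert--Mumford criterion. Your purely support-theoretic substitute breaks in two places. First, in (b), $0\notin\convo(\wt(K))$ does not force $u,v\in\Nn_{V,T}$: with weights $(1,0),(-1,0),(0,1)$ and $u=(1,1,1)$, $v=(1,1,2)$ (a pair in $\Ss_{V,T}$, indeed in $\Gamma_{V,T}$), the origin lies on the boundary but not in the relative interior of $\conv(\wt(K))$, yet $x_1x_2(u)\neq 0$; your ``enlarge to a maximal set with $0\notin\conv$'' step silently upgrades relative interior to the full hull, whereas membership in the nullcone is governed by $\conv$, cf.\ Lemma~\ref{lem-nullcone}. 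Second, the asymmetric case $I=\emptyset\neq J$ with $0\in\convo(\wt(K))$ (same weights, $u=(1,1,0)$, $v=(1,1,1)$, again in $\Ss_{V,T}$) fits neither the nullcone piece nor any admissible triple $(K,I,J)$, so it must be shown to lie in $\overline{\Gamma_{V,T}}$; that needs a degeneration along a one-parameter subgroup (or the closed-orbit argument), which is exactly the Hilbert--Mumford ingredient of the paper's proof and is absent from your outline --- your degeneration remark only covers $I=J=\emptyset$. Finally, your step (c) is sound only in the case $0\in\convo(\wt(K))$, where an invariant monomial with full support $K$ exists and lets you clear denominators against honest invariants; as a blanket statement about $K=\supp(u)\cap\supp(v)$ it is false (e.g.\ for two nullcone points with equal support), though that is the only case in which you use it.
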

\begin{proof}
We first show the inclusion ``$\supseteq$''. By the discussion above, $\overline{\Gamma_{V,T}}$ and $\Nn_{V,T}\times \Nn_{V,T}$ are both contained in the separating variety. It remains to show that for each choice of $K,I,J$, the set $\Gamma_{V_K,T} \oplus (V_{I}\times V_{J})$ is contained in the separating variety. Let $(u_1,u_2)$ be an arbitrary point of $\Gamma_{V_K,T} \oplus (V_{I}\times V_{J})$. By definition we can write $(u_1,u_2)=(v_1,v_2)+(w_1,w_2)$, where $(v_1,v_2)\in \Gamma_{V_K,T}$ and $(w_1,w_2)\in V_{I}\times V_{J}$. Take ${\alpha\in\ker_{\ZZ}A\cap \NN^n}$. As $0$ is not in the interior of the convex hull of $\wt(K\cup I)$ and $\wt(K\cup J)$, it follows that either $\supp(\alpha)\subseteq K$ or $\supp(\alpha)\not\subseteq K\cup I$ and $\supp(\alpha)\not\subseteq K\cup J$. If $\supp(\alpha)\subseteq K$, then
\[x^\alpha(u_1)=x^\alpha(v_1)=x^\alpha(v_2)=x^\alpha(u_2)\,,\]
and if $\supp(\alpha)\not\subseteq K\cup I$ and $\supp(\alpha)\not\subseteq K\cup J$, then
\[x^\alpha(u_1)=0=x^\alpha(u_2)\,.\]
In both cases $(u_1,u_2)\in \Ss_{V,T}$ as desired.

We now prove the reverse inclusion ``$\subseteq$''. Take $(u_1,u_2)\in \Ss_{V,T}$. As $T$ is a reductive group, this is equivalent to $\overline{Tu_1}\cap \overline{Tu_2}\neq \emptyset$ (follows from \cite[Corollary 3.5.2]{pen:impos}). Without loss of generality we have $z\in \overline{Tu_1}\cap \overline{Tu_2}$, where $Tz=\overline{Tz}$ is the unique closed orbit in $\overline{Tu_1}$ and $\overline{Tu_2}$. If $Tu_1=Tz$, we have
\[(u_1,u_2)\in \overline{Tu_2}\times \{u_2\}\subseteq \overline{\Gamma_{V,T}}\,.\]
Similarly, if $Tu_2=Tz$, then $(u_1,u_2)\in\overline{\Gamma_{V,T}}$. We now suppose that $Tz\neq Tu_1,Tu_2$. If $z=0$, then $(u,v)\in \Nn_{V,T}\times \Nn_{V,T}$, so we suppose $z\neq 0$. Then, since the orbit of $z$ is closed, $0$ is in the interior of the convex hull of $\wt(z)$ (see Lemma \ref{lem-closedorbit} below). By the extended Hilbert-Mumford criterion \cite[Theorem~C]{rwr:oooagalg}, there exist 1-dimensional subtori $S_1,S_2\subseteq T$ such that the intersections $\overline{S_1u_1}\cap Tz$ and $\overline{S_2u_2} \cap Tz$ are nonempty; that is, there exist $t_1,t_2\in T$ such that $t_1\cdot z\in \overline{S_1u_1}$ and $t_2\cdot z\in \overline{S_2u_2}$. If $t_1\cdot z\in S_1u_1$, then $Tu_1=Tz$, and this case is done. Similarly, $t_2\cdot z\in S_2u_2$ is also done. So we now suppose that $t_1\cdot z\notin S_1u_1$ and $t_2\cdot z\notin S_2u_2$. The 1-dimensional subtori $S_1,S_2$ correspond to a choice of $\delta_1,\delta_2\in \ZZ^r$. We then have
\[S_1u_1=\{(s^{\delta_1\cdot m_1}u_{1,1},\ldots, s^{\delta_1\cdot m_n}u_{1,n}) \mid s\in \kk^*)\}\,,\]
and
\[S_2u_2=\{(s^{\delta_2\cdot m_1}u_{2,1},\ldots, s^{\delta_2\cdot m_n}u_{2,n}) \mid s\in \kk^*)\}\,.\]
Without loss of generality, our assumption that $t_1\cdot z\notin S_1u_1$ and $t_2\cdot z\notin S_2u_2$ implies that $t_1\cdot z$ and $t_2\cdot z$ are seen to belong to the orbit closures by letting  $s$ tends to zero in the above. It then follows that
\begin{align*}
   \delta_1\cdot m_i >0&, \forall i\in \supp(u_1)\setminus \supp(z)\,,\\
   \delta_2\cdot m_i >0&, \forall i\in \supp(u_2)\setminus \supp(z)\,,\\
   \delta_1\cdot m_i=\delta_2\cdot m_i=0&, \forall i\in \supp(z)\,,
  \end{align*}
and so we have $u_1=t_1\cdot z+w_1$ and $u_2=t_2\cdot z+w_2$, where $w_1,w_2\in\Nn_{V,T}$ and the intersection of their support with the support of $z$ is empty. Our assumptions that $Tz$ is the unique closed orbit in $\overline{Tu_1}$ and $\overline{Tu_2}$ and it is not equal to $Tu_1$ or $Tu_2$ implies that the orbits $Tu_1$ and $Tu_2$ are not closed. As a consequence, $0$ is not in the interior of the convex hull of $\wt(u_1)$ or $\wt(u_2)$. Writing 
\[v_1:=t_1\cdot z, \ v_2:=t_2\cdot z, \ \text{and} \ K:=\supp z,\  I:=\supp(u_1)\setminus K, \ J:=\supp(u_2)\setminus K\,, \]
 we have $0\in\convo(\wt(K))$, $0\notin \convo(\wt(K\cup I)),\convo(\wt(K\cup J))$, and
$(u_1,u_2)=(v_1,v_2)+(w_1,w_2)$ with $(v_1,v_2)\in\Gamma_{V_K,T}$ and $(w_1,w_2)\in V_I\times V_J$. This completes the proof.
\end{proof}

The following is stated without proof in the characteristic zero case in \cite[6.15]{irs:egiv}. We expect that it is already known, but include a proof for lack of an appropriate reference.
\begin{lem}[{cf. \cite[6.15]{irs:egiv}}]\label{lem-closedorbit}
 The orbit of $z$ is closed if and only if $0\in\convo(\wt(z))$.
\end{lem}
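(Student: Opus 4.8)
The plan is to prove both directions by relating closedness of the torus orbit of $z$ to the existence of one-parameter subgroups that degenerate $z$ to a smaller point, via the Hilbert--Mumford criterion, and then to translate the existence of such one-parameter subgroups into the convex-geometric condition on $\wt(z)$.

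First I would reduce to the case $z \neq 0$ and replace $z$ by the restriction of the action to the coordinate subspace $V_K$ with $K = \supp(z)$, so that $z$ has full support. Under this restriction all weights $m_i$ for $i \in K$ are the relevant ones, and the orbit $Tz$ in $V$ is closed if and only if $T z$ is closed in $V_K$ (since $V_K$ is a $T$-stable closed subvariety, the orbit closures agree). For the direction ``$0 \in \convo(\wt(z)) \Rightarrow Tz$ closed'': I would show the contrapositive. If $Tz$ is not closed, then by the Hilbert--Mumford criterion (as already invoked in the proof of Proposition~\ref{prop-SepVarDecompGen}, via \cite[Theorem~C]{rwr:oooagalg}) there is a one-parameter subgroup $\delta \in \ZZ^r$ with $\lim_{s\to 0} \delta(s)\cdot z =: z'$ existing in $V_K$ and lying in a strictly smaller orbit, in particular $z' \neq z$. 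Existence of the limit forces $\delta \cdot m_i \geq 0$ for all $i \in K$, and $z' \neq z$ forces $\delta \cdot m_i > 0$ for at least one $i \in K$. Thus the linear functional $\langle \delta, -\rangle$ is nonnegative on all of $\wt(z)$ and strictly positive somewhere, so $0$ lies on a proper face of $\conv(\wt(z))$ and hence $0 \notin \convo(\wt(z))$.

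For the converse ``$Tz$ closed $\Rightarrow 0 \in \convo(\wt(z))$'', again I argue the contrapositive: if $0 \notin \convo(\wt(z))$ with $z$ of full support $K$, then $0$ lies on the boundary of the cone/polytope generated by $\wt(z)$ (or outside it), so by the separating hyperplane theorem there is $\delta \in \QQ^r$, which we may clear denominators to assume lies in $\ZZ^r$, with $\delta \cdot m_i \geq 0$ for all $i \in K$ and $\delta \cdot m_{i_0} > 0$ for some $i_0 \in K$. (If $0$ is outside $\conv(\wt(z))$ entirely one gets strict inequalities everywhere; the boundary case is the one where some inequalities are equalities, but in all cases at least one is strict because $z$ has full support and $0$ is not interior.) Then the one-parameter subgroup $\delta$ satisfies $\lim_{s \to 0}\delta(s)\cdot z = z''$, where $z''$ is obtained from $z$ by zeroing out the coordinates $i$ with $\delta\cdot m_i > 0$; this limit exists, lies in $\overline{Tz}$, and has strictly smaller support than $z$, hence is not in $Tz$. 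Therefore $Tz$ is not closed.

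The main obstacle I expect is bookkeeping around the two geometric regimes — $0$ strictly outside $\conv(\wt(z))$ versus $0$ on the relative boundary — and making sure the separating functional $\delta$ can be chosen in $\ZZ^r$ (rationality of $\delta$: this follows since $\wt(z)$ is a finite set of integer vectors, so the relevant polyhedral cones are rational and the separating hyperplane can be taken rational) and, crucially, that it produces an honest degeneration inside $V_K$ with strictly smaller support rather than a trivial one. A secondary subtlety is justifying that the limit along a one-parameter subgroup equals the coordinate-zeroing map described above and that it genuinely lands in the orbit closure; this is standard but should be stated. I would also remark that this lemma is exactly the ``Kempf--Hilbert--Mumford'' picture specialized to tori, which is why it is folklore and why \cite[6.15]{irs:egiv} omits the proof.
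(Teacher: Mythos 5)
Your proof is correct and follows essentially the same route as the paper's: the extended Hilbert--Mumford criterion reduces closedness of $Tz$ to the (non)existence of a one-parameter subgroup $\delta\in\ZZ^r$ with appropriate sign conditions on the $\delta\cdot m_i$, which is then translated into the convex-geometric condition via hyperplane separation together with a rationality argument. If anything you are more careful than the paper, whose stated criterion ($\delta\cdot m_i>0$ for all relevant $i$) glosses over the case where $0$ lies on the relative boundary of $\conv(\wt(z))$; your condition ($\delta\cdot m_i\gs 0$ for all $i\in\supp(z)$ with strict inequality for at least one) is the right one, and your justification of rationality via rational supporting hyperplanes of rational polytopes is sound.
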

\begin{proof}
By the extended Hilbert-Mumford criterion \cite[Theorem~C]{rwr:oooagalg}, it suffices to verify that any for any one parameter subgroup $\lambda$ of $T$, the limit of $\lambda z$ is contained in $T z$. Thus, $Tz$ is closed if and only if there does not exist $\delta \in \ZZ^r$ such that for all $i \in \supp \delta$, we have $ \delta \cdot m_i >0$. By the hyperplane separation theorem applied to the convex sets $U = \mathrm{int}(\mathrm{conv}\{ m_i \} )$ and $V=\{0\}$, it follows that a real vector $\delta$ satisfying the above exists if and only if $0 \in \convo(\wt(z))$. If a real such $\delta$ exists, it may be perturbed slightly in any direction except one parallel to a subspace generated by a set of $m_i$ that contain $0$ in its convex hull. However, the coordinates in such a subspace are necessarily rational.
\end{proof}

\begin{cor} \label{cor-sepvar1approx}
 Let $V$ be a representation of a torus of rank $r$ and suppose the matrix of weights has rank $r$. Suppose that $0\in\convo(\wt(I))$ implies that $\spn_\RR \wt(I)=\RR^r$. Then the separating variety can be written as 
 \[\Ss_{V,T}=\overline{\Gamma_{V,T}}\ \bigcup \ \Nn_{V,T}\times \Nn_{V,T}.\]
\end{cor}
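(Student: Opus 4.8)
The plan is to deduce this immediately from Proposition~\ref{prop-SepVarDecompGen}: I will show that under the stated hypothesis the indexed union over triples $(K,I,J)$ appearing there is empty, so that the decomposition collapses to exactly $\overline{\Gamma_{V,T}}\cup(\Nn_{V,T}\times\Nn_{V,T})$. Recall that a triple contributes only if $0\in\convo(\wt(K))$, $I,J\subseteq[n]\setminus K$, $0\notin\convo(\wt(K\cup I))$, and $0\notin\convo(\wt(K\cup J))$.

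So suppose $K$ satisfies $0\in\convo(\wt(K))$. Since $0$ then lies in the convex hull of $\wt(K)$, it lies in the affine hull of $\wt(K)$; hence that affine hull is a linear subspace and equals $\spn_\RR\wt(K)$, which by hypothesis is all of $\RR^r$. Therefore the relative interior used to define $\convo(\wt(K))$ is just the Euclidean interior of $\conv(\wt(K))$ in $\RR^r$, and $0\in\operatorname{int}_{\RR^r}\conv(\wt(K))$. Now for any $I\subseteq[n]\setminus K$ we have $\wt(K)\subseteq\wt(K\cup I)$, hence $\conv(\wt(K))\subseteq\conv(\wt(K\cup I))$; by monotonicity of the interior operator, $0\in\operatorname{int}_{\RR^r}\conv(\wt(K\cup I))$. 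Since this convex set contains $\conv(\wt(K))$, which already spans $\RR^r$, its affine hull is $\RR^r$ as well and its relative interior coincides with its Euclidean interior, so $0\in\convo(\wt(K\cup I))$ --- contradicting $0\notin\convo(\wt(K\cup I))$. (In particular this rules out $I=\varnothing$ too, where the obstruction $0\in\convo(\wt(K))=\convo(\wt(K\cup I))$ is visible directly.) Hence no triple $(K,I,J)$ meets the indexing conditions, the union is empty, and Proposition~\ref{prop-SepVarDecompGen} gives the claimed identity.

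The argument requires no separate verification of the reverse inclusion, since $\overline{\Gamma_{V,T}}$ and $\Nn_{V,T}\times\Nn_{V,T}$ are contained in $\Ss_{V,T}$ for any representation. The one place I would be careful is the interplay between ``relative interior with respect to $\spn_\RR\wt(I)$'' and ``Euclidean interior in $\RR^r$'': the hypothesis is exactly what forces these to agree for both $\conv(\wt(K))$ and $\conv(\wt(K\cup I))$, and once that normalization is in place the monotonicity of the interior operator does all the remaining work. I do not foresee any substantial obstacle beyond this bookkeeping.
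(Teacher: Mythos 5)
Your proof is correct and is essentially the paper's own argument: both deduce the corollary from Proposition~\ref{prop-SepVarDecompGen} by showing that when $0\in\convo(\wt(K))$ the hypothesis forces $\spn_\RR\wt(K)=\RR^r$, so $0$ lies in the interior of $\conv(\wt(K\cup I))$ for every $I$, and the third family of components cannot occur. Your added bookkeeping about relative versus Euclidean interiors is just a more explicit rendering of the same step.
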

\begin{proof}
 Suppose $0\in\convo(\wt(K))$, where $K$ is as in Proposition~\ref{prop-SepVarDecompGen}. The assumption that $\wt(K)$ spans $\RR^r$ implies that $0$ is in the interior of the convex hull of $\wt(I)$ for any set containing $K$. Hence the third possible contribution in the statement of Proposition \ref{prop-SepVarDecompGen} does not occur.
\end{proof}

The following lemma gives a step towards establishing which irreducible components of $ \Nn_{V,T}\times \Nn_{V,T}$ are contained in $\overline{\Gamma_{V,T}}$.

\begin{lem}\label{lem-NullconeGraph}
Let $V_I\times V_J$ be an irreducible component of $\Nn_{V,T}\times \Nn_{V,T}$.
\begin{enumerate}
\item If $I\cap J=\emptyset$, then $V_I\times V_J \in \overline{\Gamma_{V,T}}$. \label{lem-uvinGraph}
\item Let $(u,v)\in V_I\times V_J$ have full support. If $\ker_{\ZZ} A_{I\cap J}\neq \{0\}$, then $(u,v)\notin \overline{\Gamma_{V,T}}$. \label{lem-uvnotinGraph}
\end{enumerate}
\end{lem}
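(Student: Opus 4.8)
The plan is to handle the two parts separately, using the combinatorial description of the nullcone (Lemma \ref{lem-nullcone}) together with direct manipulations of orbit closures.

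For part \eqref{lem-uvinGraph}, suppose $I\cap J=\emptyset$ and take $(u,v)\in V_I\times V_J$. The idea is to produce a single point $w\in V$ and a one-parameter subgroup (equivalently a vector $\delta\in\ZZ^r$) whose associated limit picks out $u$ on one side and $v$ on the other. Since $V_I$ and $V_J$ are irreducible components of the nullcone, Lemma \ref{lem-nullcone} tells us that $0\notin\conv(\wt(I))$ and $0\notin\conv(\wt(J))$; the hyperplane separation theorem then yields $\delta_1,\delta_2\in\ZZ^r$ with $\delta_1\cdot m_i>0$ for all $i\in I$ and $\delta_2\cdot m_j>0$ for all $j\in J$. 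Because $I$ and $J$ are disjoint, I would like to combine these into one vector $\delta$ that is positive on $I$ and negative on $J$; this is possible after scaling if the linear conditions are consistent, and in any case one can argue coordinate-by-coordinate. Then set $w$ to have support $I\cup J$ agreeing with $u$ on $I$ and with $v$ on $J$. Letting $s\to 0$ in the action of the subtorus $\delta$ sends $w$ to the point supported on $J$ (matching $v$) while letting $s\to\infty$ (equivalently using $-\delta$, letting $s\to 0$) sends $w$ to the point supported on $I$ (matching $u$). Thus $(u,v)$ lies in the closure of $\{(t\cdot w, t'\cdot w)\}$, hence in $\overline{\Gamma_{V,T}}$. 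The point requiring care is the consistency of the sign conditions defining $\delta$; I expect one can sidestep any obstruction by noting that the separating conditions on $I$ and on $J$ involve different coordinates, so suitable integer combinations of $\delta_1$ and $-\delta_2$ work.

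For part \eqref{lem-uvnotinGraph}, suppose $\ker_\ZZ A_{I\cap J}\neq\{0\}$ and pick $0\neq\gamma\in\ker_\ZZ A_{I\cap J}$; write $\gamma=\gamma^+-\gamma^-$ as a difference of nonnegative vectors, both supported in $I\cap J$, and note $\gamma^+,\gamma^-\in\Ll$ are genuine invariants $x^{\gamma^+},x^{\gamma^-}\in\kvg$. Now choose a specific $(u,v)\in V_I\times V_J$ that cannot lie in $\overline{\Gamma_{V,T}}$: take $u$ with full support $I$ and $v=0$ (or more carefully, $v$ with support $J$ chosen so that $x^{\gamma^+}(v)$ and $x^{\gamma^-}(v)$ take prescribed incompatible values). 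Every point of $\overline{\Gamma_{V,T}}$ satisfies $f\otimes 1 - 1\otimes f$ for all invariants $f$, in particular for $x^{\gamma^+}$ and $x^{\gamma^-}$. But for a point $u$ supported on all of $I\cap J$ with generic coordinates, $x^{\gamma^+}(u)\neq 0$, while $x^{\gamma^+}(v)=0$ if $v$ is supported off $I\cap J$; this violates the equation, so $(u,v)\notin\overline{\Gamma_{V,T}}$. The main obstacle here is matching the quantifiers correctly: the statement as written ("$(u,v)\notin\overline{\Gamma_{V,T}}$") should be read as asserting the existence of such a point in $V_I\times V_J$ (or that a generic point fails), and the argument must pin down exactly which invariant to evaluate — namely one whose monomial is supported inside $I\cap J$, which exists precisely because $\ker_\ZZ A_{I\cap J}\neq\{0\}$ and hence $\Ll_{I\cap J}\neq\{0\}$ after taking positive and negative parts. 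I would spell out that a nonzero element of $\ker_\ZZ A_{I\cap J}$ yields, via its positive part, a nonzero element of $\Ll_{I\cap J}$, so that a nontrivial invariant monomial in the variables indexed by $I\cap J$ is available to detect the discrepancy.

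Overall, the harder of the two is part \eqref{lem-uvinGraph}: part \eqref{lem-uvnotinGraph} is essentially a one-invariant obstruction once the right monomial is identified, whereas \eqref{lem-uvinGraph} requires constructing an explicit one-parameter degeneration realizing an arbitrary pair, and the sign-compatibility of the defining vector $\delta$ is the step most likely to need a short separate argument.
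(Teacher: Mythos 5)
Both parts of your proposal have genuine gaps. For part (1), the trouble starts with the vector $\delta$: the conditions $\delta\cdot m_i>0$ for $i\in I$ and $\delta\cdot m_j<0$ for $j\in J$ all constrain the same $\delta\in\ZZ^r$ against weight vectors that can interact, so "the conditions involve different coordinates" is not an argument, and no fixed integer combination of $\delta_1$ and $-\delta_2$ works in general. The missing idea is the maximality of $I$ as a nullcone component: for each $j\notin I$ one has $0\in\conv(\wt(I\cup\{j\}))$ but $0\notin\conv(\wt(I))$, giving a relation $\alpha_j m_j+\sum_{i\in I}\alpha_i m_i=0$ with $\alpha\gs 0$, $\alpha_j>0$, which forces $\delta\cdot m_j<0$ for \emph{any} $\delta$ positive on $\wt(I)$; so $\delta_1$ alone already does the job. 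More seriously, even granted such a $\delta$, your degeneration is wrong: for $w=u+v$ the limit $\lim_{s\to 0}s\cdot w$ does not exist (the $J$-coordinates carry negative exponents and blow up), and the closure of $\{(t\cdot w,t'\cdot w)\}$ is $\overline{Tw}\times\overline{Tw}$, which in general does not contain $(u,v)$. Concretely, take $r=1$ with weights $1,-1$, $u=(a,0)$, $v=(0,b)$, $a,b\neq 0$: then $\overline{Tw}$ for $w=(a,b)$ is the closed conic $x_1x_2=ab$, containing neither $u$ nor $v$, although $(u,v)\in\overline{\Gamma_{V,T}}$. The repair is to let the graph point move with the parameter: the curve $t\mapsto(u+t^{-1}\cdot v,\;t\cdot u+v)$ equals $(w_t,t\cdot w_t)$ with $w_t=u+t^{-1}\cdot v$, hence lies in the graph for $t\neq 0$, and converges to $(u,v)$ as $t\to 0$ precisely because $\delta$ is positive on $I$ and negative on $J$. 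This is the argument in the paper.

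For part (2), the step "a nonzero element of $\ker_\ZZ A_{I\cap J}$ yields, via its positive part, a nonzero element of $\Ll_{I\cap J}$" is false: if $\gamma=\gamma^+-\gamma^-\in\ker_\ZZ A$, then typically $A\gamma^+=A\gamma^-\neq 0$, so $x^{\gamma^+}$ and $x^{\gamma^-}$ are not invariants and $f\otimes 1-1\otimes f$ need not vanish on the graph for them. In fact no nonzero invariant monomial supported in $I\cap J$ can exist in the setting of the lemma: since $V_I$ is a nullcone component, $0\notin\conv(\wt(I))$, so $\Ll_{I\cap J}\subseteq\Ll_I=\{0\}$; the hypothesis $\ker_\ZZ A_{I\cap J}\neq\{0\}$ only records an integral relation among the weights indexed by $I\cap J$ (e.g.\ a repeated weight), which is exactly the situation in the Segre case with $r\gs 3$ where the lemma is applied. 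So your one-invariant obstruction cannot be made to work. The paper instead exploits that $x^{\beta^+}$ and $x^{\beta^-}$ are semi-invariants of the same weight, so the function $x^{\beta^+}\otimes x^{\beta^-}-x^{\beta^-}\otimes x^{\beta^+}$ vanishes on $\Gamma_{V,T}$ and its closure, and then evaluates it at the explicit point with $u$ the indicator vector of $\supp(\beta)$ and $v$ the indicator vector of $\supp(\beta)\setminus\{i_0\}$, where it equals $1$. Your suggested witness $(u,0)$ is also untenable for a different reason: $u$ lies in the nullcone, so $0\in\overline{Tu}$ and $(u,0)\in\overline{\Gamma_{V,T}}$, hence no correct obstruction could exclude it.
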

\begin{proof}
(1): As $I$ and $J$ are disjoint, the maximality of $I$ implies that for each $j\in J$ there is an invariant with exponent vector $\alpha\in\NN^n$ with $j\in\supp(\alpha)\subseteq I\cup \{j\}$. Hence $\alpha_jm_j+\sum_{i\in I}\alpha_im_i=0$. Using the Hilbert-Mumford criteria \cite[Theorem~C]{rwr:oooagalg}, one can see that supposing $V_I$ is a component of the null cone implies that $I$ is maximal among subsets $K$ of $\{1,\ldots,n\}$ such that there exist $\delta\in \ZZ^r$ satisfying $\delta\cdot m_i> 0$ for all $i\in I$. It follows that
\[\delta\cdot m_j=-\nicefrac{1}{\alpha_j}\sum_{i\in I} \alpha_i(\delta\cdot m_i)<0.\]
The $r$-tuple $\delta$ corresponds to a 1-parameter subgroup of $T$, where the induced action of $t\in\kk^*$ and on a vector $w\in V$ is given by $t\cdot w:=(t^{\delta\cdot m_1}w_1,\ldots,t^{\delta\cdot m_n}w_n)$. For each $t\in\kk^*$ and $(u,v)\in V_I \times V_J$,
\begin{equation} 
(u+t^{-1}\cdot v, t\cdot (u+t^{-1}\cdot v))=(u+t^{-1}\cdot v,t\cdot u +v)  \label{eqn-sequence}
\end{equation}
 belongs to the graph. Note that $\delta\cdot m_i> 0$ for $i\in I$ and $\delta\cdot m_j< 0$ for $j\in J$ imply that (\ref{eqn-sequence}) is also well defined for $t=0$. It follows that $(u+t^{-1}\cdot v,t\cdot u +v)|_{t=0}=(u,v)$ must belong to the Zariski closure of the graph $\bar{\Gamma_{V,T}}$.

(2): If $\ker_{\ZZ} A_{I\cap J}\neq \{0\}$, then there is a rational invariant with support contained in $\supp(u) \cap \supp(v)$, with exponent vector $\beta\in \ZZ^n$. Define $\beta^+\in \ZZ^n$ by ${(\beta^+)_i=\max\{ \beta_i,0\}}$, $\beta^-\in \ZZ^n$ by $(\beta^-)_i=\max\{ -\beta_i,0\}$, and fix $i_0\in \supp(\beta)$. Without loss of generality we may assume that $i_0\in\supp(\beta^+)$. Note that since $A$ does not have zero columns, $\beta\in \ker_{\ZZ} A_{I\cap J}$ implies that $|\supp(\beta)|\gs 2$. Define

\begin{align*}
    u_i &=  \left\{\begin{array}{cl}
		1 & i\in \supp (\beta)\\
		0 & \text{otherwise}\end{array}\right.\\
    v_i &=  \left\{\begin{array}{cl}
		1 & i\in \supp (\beta)\setminus \{i_0\}\\
		0 & \text{otherwise}\end{array}\right. .
   \end{align*}
Then $(u,v)\in V_I\times V_J$ and
\[(x^{\beta^+}\otimes x^{\beta^-}-x^{\beta^-}\otimes x^{\beta^+})(u,v)=x^{\beta^+}(u)x^{\beta^-}(v)-x^{\beta^-}(u)x^{\beta^+}(v)=1\neq 0.\]
That is $(u,v)\notin \bar{\Gamma_{V,T}}$ and so $V_I\times V_J\not\subseteq \bar{\Gamma_{V,T}}$.
\end{proof}

\subsection{The separating variety for the affine cone over Segre-Veronese varieties}

In this subsection we consider the case of nonmodular Segre-Veronese varieties, that is such that $a_i\in\kk^*$ for each $i=1,\ldots,r$, which suffices to determine the minimal size of separating sets in general by Lemma \ref{lem-nonmodSV} .

\begin{prop}[Nonmodular Segre-Veronese]\label{prop-sepvarSV}
Consider the nonmodular  Segre-Veronese variety that is the image of the closed embedding $\prod_{i=1}^r \PP^{n_i-1} \hookrightarrow \PP^N$ given by the line bundle $\mcal{O}(a_1,\dots,a_r)$ and whose ring of homogeneous coordinates is identified with the ring of invariants $\kk[W]^G$ as described in Section~\ref{section-SV}.
\begin{enumerate}
\item The separating variety $\Ss_{W,G}$ decomposes as follows: \label{prop-sepvarSV1}
 \[\Ss_{W,G}=\bigcup_{\sigma\in H} (1,\sigma)(\Ss_{W,T})\,.\]
\item For $\sigma,\tau\in H$, if $(1,\sigma)(\Ss_{W,T})$ and $(1,\tau)(\Ss_{W,T})$ are distinct, then their intersection is $\Nn_{W,T}\times\Nn_{W,T}$. \label{prop-sepvarSVinter}
\item For $r=1$, $\Ss_{W,T}=W\times W$. \label{prop-sepvarS1}
\item For $r=2$, the decomposition of the separating variety $\Ss_{W,T}$ as a union of irreducibles is  \label{prop-sepvarS2}
\[\Ss_{W,T}=\overline{\Gamma_{W,T}}\cup W_{\hat{1}}\times W_{\hat{1}} \cup W_{\hat{2}}\times W_{\hat{2}},\]
where $W_{\hat{k}}=\spn\{w_{i,j}\mid i\neq k\}$ in terms of the diagonal basis $\{w_{1,*},w_{2,*}\}$ of $W$. Furthermore, $\overline{\Gamma_{W,T}}$ is cut out by the ideal of $2\times 2$ minors
\[I_2\left(\begin{array}{cccccc}
x_{1,1}\otimes 1 & \cdots & x_{1,n_1}\otimes 1 & 1\otimes x_{2,1} & \cdots & 1\otimes x_{2,n_2} \\
1\otimes x_{1,1} & \cdots & 1\otimes x_{1,n_1} & x_{2,1}\otimes 1 & \cdots & x_{2,n_2}\otimes 1
\end{array}\right).\]

\item For $r\gs 3$, the decomposition of the separating variety as a union of irreducibles is\label{prop-sepvarS3}
\[\Ss_{W,T}=\overline{\Gamma_{W,T}} \cup \bigcup_{k,\ell=1}^r W_{\hat{k}}\times W_{\hat{\ell}}.\]
\end{enumerate}
\end{prop}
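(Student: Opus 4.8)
The plan is to establish the decomposition of $\Ss_{W,T}$ first (parts (4), (5)), then bootstrap the finite group $H$ on top of it (parts (1), (2)); part (3) is the degenerate case $r=1$, where the torus $T$ has rank $0$ and $\Ss_{W,T}$ is read off directly from the definition of the separating variety. Throughout I assume $n_i\ge 2$ for every $i$ — a factor $\PP^{n_i-1}$ with $n_i=1$ is a point, contributes nothing to $X$, and may be deleted. For (4) and (5) the geometric input is that the $r$ weights $e_1,\dots,e_{r-1},-\sum_{i<r}e_i\in\ZZ^{r-1}$ are the vertices of a simplex containing $0$ in its interior, and any $r-1$ of them are linearly independent. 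Hence $0\in\conv(\wt(I))$ iff $I$ meets all $r$ blocks, and in that case $\spn_\RR\wt(I)=\RR^{r-1}$ and $0\in\convo(\wt(I))$. Plugging this into Lemma~\ref{lem-nullcone}, the maximal $I$ with $0\notin\conv(\wt(I))$ are exactly the sets $\hat k$ (missing exactly one block), so $\Nn_{W,T}=\bigcup_k W_{\hat k}$; plugging it into Corollary~\ref{cor-sepvar1approx} (its hypothesis holds, with $\RR^{r-1}$ in place of $\RR^r$), the third family of summands in Proposition~\ref{prop-SepVarDecompGen} vanishes, giving $\Ss_{W,T}=\overline{\Gamma_{W,T}}\cup(\Nn_{W,T}\times\Nn_{W,T})=\overline{\Gamma_{W,T}}\cup\bigcup_{k,\ell}W_{\hat k}\times W_{\hat\ell}$. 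For $r\ge 3$ this is the asserted list; for $r=2$, $\hat 1$ and $\hat 2$ are disjoint, so Lemma~\ref{lem-NullconeGraph}(1) moves $W_{\hat 1}\times W_{\hat 2}$ and $W_{\hat 2}\times W_{\hat 1}$ into $\overline{\Gamma_{W,T}}$, leaving the three stated components. For irredundancy: $\overline{\Gamma_{W,T}}$ contains the diagonal, hence lies in no $W_{\hat k}\times W_{\hat\ell}$ (as $W_{\hat k}\subsetneq W$); the $W_{\hat k}\times W_{\hat\ell}$ are pairwise non-nested since distinct blocks are disjoint; and $W_{\hat k}\times W_{\hat\ell}\not\subseteq\overline{\Gamma_{W,T}}$ by Lemma~\ref{lem-NullconeGraph}(2), since $\hat k\cap\hat\ell$ contains a full block, which has at least two variables of the same weight, so $\ker_{\ZZ}A_{\hat k\cap\hat\ell}\neq 0$.

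For the determinantal statement in (4): on $\Gamma_{W,T}$ the bottom row of the displayed $2\times(n_1+n_2)$ matrix is $t$ times the top row, so $\overline{\Gamma_{W,T}}$ lies in the rank-$\le1$ locus $V(I_2)$; conversely, a rank-$\le1$ point $(u,v)$ falls, after a short case split on which of the two rows vanishes, either into $\Gamma_{W,T}$ (rows proportional by a nonzero scalar, from which one reads off the element of $T$) or into $W_{\hat 1}\times W_{\hat 2}$ or $W_{\hat 2}\times W_{\hat 1}\subseteq\overline{\Gamma_{W,T}}$. Thus $V(I_2)=\overline{\Gamma_{W,T}}$ as sets, and since the $2\times2$ minors of a $2\times q$ matrix of distinct variables generate a prime ideal, $I_2$ is the defining ideal of $\overline{\Gamma_{W,T}}$. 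Part (3) is then immediate by unwinding the definition of $\Ss_{W,T}$ when $r=1$ (so $T$ has rank $0$).

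Part (1) is formal. From $\kk[W]^G=(\kk[W]^T)^H$, the quotient $\pi_G\colon W\to\Spec\kk[W]^G$ factors as $q\circ\pi_T$, where $\pi_T\colon W\to\Spec\kk[W]^T$ and $q$ is the quotient by the finite group $H$, whose set-theoretic fibers are the $H$-orbits. Now $(u,v)\in\Ss_{W,G}$ iff $\pi_G(u)=\pi_G(v)$, iff $\pi_T(u)$ and $\pi_T(v)$ lie in one $H$-orbit, iff $\pi_T(\sigma u)=\pi_T(v)$ for some $\sigma\in H$ (using that $\pi_T$ is $H$-equivariant, as $H$ preserves $\kk[W]^T$), iff $(\sigma u,v)\in\Ss_{W,T}$ for some $\sigma$; and since $\Ss_{W,T}$ is invariant under the diagonal $H$-action, this is the same as $(u,v)\in(1,\sigma)\Ss_{W,T}$ for some $\sigma$, i.e.\ $\Ss_{W,G}=\bigcup_{\sigma\in H}(1,\sigma)\Ss_{W,T}$.

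Part (2) is where the real work lies, and it is the main obstacle. Applying the automorphism $(1,\tau^{-1})$ of $W\times W$ — which preserves $\Nn_{W,T}\times\Nn_{W,T}$ and carries the two pieces to $(1,\tau^{-1}\sigma)\Ss_{W,T}$ and $\Ss_{W,T}$ — reduces the statement to: if $(1,\sigma)\Ss_{W,T}\neq\Ss_{W,T}$, then $\Ss_{W,T}\cap(1,\sigma)\Ss_{W,T}=\Nn_{W,T}\times\Nn_{W,T}$. Since $\sigma$ only scales coordinates, it preserves each $W_{\hat k}$, so $(1,\sigma)$ fixes every component $W_{\hat k}\times W_{\hat\ell}$; combined with (4)/(5), the intersection equals $\bigl(\overline{\Gamma_{W,T}}\cap(1,\sigma)\overline{\Gamma_{W,T}}\bigr)\cup(\Nn_{W,T}\times\Nn_{W,T})$, and the hypothesis forces $(1,\sigma)\overline{\Gamma_{W,T}}\neq\overline{\Gamma_{W,T}}$; so it remains to prove $\overline{\Gamma_{W,T}}\cap(1,\sigma)\overline{\Gamma_{W,T}}\subseteq\Nn_{W,T}\times\Nn_{W,T}$ (the reverse inclusion being clear). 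Let $(u,v)$ be in this intersection. Then $\pi_T(u)=\pi_T(v)$ and $\pi_T(u)=\sigma^{-1}\pi_T(v)$, so $\sigma$ fixes $y:=\pi_T(u)$, and $y=0$ iff $u\in\Nn_{W,T}$ iff $v\in\Nn_{W,T}$. Suppose $y\neq 0$; then the closed orbit in $\overline{Tu}$ is $Tz$ with $z\neq 0$, so by Lemma~\ref{lem-closedorbit} $0\in\convo(\wt(z))$, forcing $\supp(z)$ to meet every block. Writing $\sigma=(\zeta_1,\dots,\zeta_r)$, this element acts on $\Spec\kk[W]^T$ by scaling the monomials of (common) block-degree $d$ by $\eta^d$, where $\eta=\prod_i\zeta_i$; and $\sigma$ fixing $y$ gives, via $\sigma\overline{Tu}=\overline{T(\sigma u)}$, that $\sigma z\in Tz$, i.e.\ $\zeta_i=t^{m_{i\ell}}$ for some $t\in T$ and all $(i,\ell)\in\supp(z)$. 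Choosing one such $\ell$ per block yields $\zeta_i=t^{e_i}$ for $i<r$ and $\zeta_r=t^{-\sum_{i<r}e_i}=\prod_{i<r}\zeta_i^{-1}$, whence $\eta=1$; but then $\sigma$ acts trivially on $\Spec\kk[W]^T$, forcing $(1,\sigma)\Ss_{W,T}=\Ss_{W,T}$, a contradiction. So $y=0$ and $(u,v)\in\Nn_{W,T}\times\Nn_{W,T}$. The crux is exactly this chain — recognizing the $H$-action on $\Spec\kk[W]^T$ as the $\eta$-grading action, and pushing ``$\sigma$ fixes the image of a point off the nullcone'' through the closed orbit (which meets every block) to $\prod_i\zeta_i=1$; everything else reduces to bookkeeping over Proposition~\ref{prop-SepVarDecompGen}, Corollary~\ref{cor-sepvar1approx} and Lemma~\ref{lem-NullconeGraph}, modulo the standing assumption $n_i\ge 2$.
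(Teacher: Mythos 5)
Your proof is correct, and its overall architecture coincides with the paper's: parts (3)--(5) are obtained from Proposition~\ref{prop-SepVarDecompGen} via Corollary~\ref{cor-sepvar1approx}, with Lemma~\ref{lem-nullcone} identifying $\Nn_{W,T}=\bigcup_k W_{\hat k}$ and Lemma~\ref{lem-NullconeGraph} sorting which products $W_{\hat k}\times W_{\hat\ell}$ are absorbed by $\overline{\Gamma_{W,T}}$; part (1) comes from factoring $\pi_G$ through $\pi_T$ and the finite quotient by $H$. You diverge from the paper in two sub-arguments. For the determinantal claim in (4), you verify the set-theoretic equality $V(I_2)=\overline{\Gamma_{W,T}}$ by a direct rank-one case split and then invoke primality of the ideal of $2\times 2$ minors of a generic $2\times q$ matrix; the paper instead recognizes $I_2$ as the toric ideal of the Lawrence lifting $\Lambda(A)$ and compares heights/dimensions. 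Both are complete; yours avoids the Lawrence-lifting citation at the cost of the case analysis. For part (2), the paper (via Lemma~\ref{lem-equal}) linearizes the $H$-action on $W/\!\!/T$ inside the span of the degree-one generators, where a nontrivial $\sigma^{-1}\tau$ acts by a single scalar $\neq 1$, so its fixed locus is $\{\pi_T(0)\}$; you run the contrapositive through orbit geometry, showing a nonzero fixed point forces its closed orbit to meet every block (Lemma~\ref{lem-closedorbit}), hence $\prod_i\zeta_i=1$ and trivial action on $W/\!\!/T$. The two arguments are dual formulations of the same fact, yours being slightly longer but also exposing explicitly that $\sigma$ acts on $\kk[W]^T$ through the character $\prod_i\zeta_i$. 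Your explicit irredundancy check (no component contained in another) and the standing assumption $n_i\gs 2$ are both sound and match what the paper uses implicitly.
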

\begin{proof}
 As $G$ is abelian, $T\trianglelefteq G$ with $G/T\cong H$ and $\kk[W]^G=(\kk[W]^T)^H$. That is, the quotient $\pi_G\colon W\to W/\!\!/G$ factors through the quotients $\pi_T\colon W\to W/\!\!/T$ and $\pi_H^T\colon  W/\!\!/T\to (W/\!\!/T)/\!\!/H\cong W/\!\!/G$. By definition of the separating variety, we have
 \[\begin{aligned}
    \Ss_{W,G} & =\{(u,v)\in W\times W \mid \pi_G(u)=\pi_G(v)\}\\
              & =\{(u,v)\in W\times W \mid \pi_H^T(\pi_T(u))=\pi_H^T(\pi_T(v))\}\,.
   \end{aligned}\]
As $H$ is a finite group, it follows that   
\[\begin{aligned}
 \Ss_{W,G}  & =\{(u,v)\in W\times W \mid \exists \sigma\in H,~\pi_T(u)=\sigma\cdot \pi_T(v)=\pi_T(\sigma\cdot v)\}\\
                     & =\{(u,v)\in W\times W \mid \exists \sigma\in H,~(u,\sigma\cdot v)\in\Ss_{W,T}\}\\
                     & =\{(u,v)\in W\times W \mid \exists \sigma\in H,~(u,v)\in (1,\sigma^{-1}(\Ss_{W,T})\}\\
                     &=\bigcup_{\sigma\in H} (1,\sigma)(\Ss_{W,T})\,.
  \end{aligned}\]

Lemma \ref{lem-equal} implies that $(1,\sigma)(\Ss_{W,T})=(1,\tau)(\Ss_{W,T})$ if and only if $\sigma^{-1}\tau$ acts trivially on $W/\!\!/T$, since 
\[ {(1,\sigma)(\Ss_{W,T})=(1,\tau)(\Ss_{W,T})} \quad \text{ if and only if } \quad  {(1,1)(\Ss_{W,T})=(1,\sigma^{-1}\tau)(\Ss_{W,T})}\,.\]
So, if $(1,\sigma)(\Ss_{W,T})$ and $(1,\tau)(\Ss_{W,T})$ are distinct, then $\sigma^{-1}\tau$ acts nontrivially on $W/\!\!/T$. The $H$-action on $W/\!\!/T$ extends naturally to the representation ${\rho\colon H \to \GL(V)}$, where $V$ has basis $\{e_{j_1,\ldots,j_r} \mid j_i\in [n_i]\}$, $\pi_T(0)$ coincides with the origin in $V$ and $\rho(\gamma)(e_{j_1,\ldots,j_r})=\prod_{i=1}^{r}\zeta_{a_i}^{m_i}e_{j_1,\ldots,j_r}$ for each $\gamma=(\zeta_{a_1}^{m_1},\ldots,\zeta_{a_r}^{m_r})\in H$. Hence $\sigma^{-1}\tau$ acts nontrivially on $W/\!\!/T$ if and only if it acts nontrivially on $V$, but then $V^{\sigma^{-1}\tau}$ is simply the origin. It then follows that $(W/\!\!/T)^{\sigma^{-1}\tau}=\pi_T(0)$.

An element of this intersection $(1,\sigma)(\Ss_{W,T})\cap(1,\tau)(\Ss_{W,T})$ will be of the form $(u,\sigma\cdot v)=(u',\tau\cdot v')$ for some $(u,v),(u',v')\in \Ss_{W,T}$. We will have $u=u'$ and $v=\sigma^{-1}\tau\cdot v'$, and so
\[\pi_T(v')=\pi_T(u')=\pi_T(u)=\pi_T(v)=\pi_T(\sigma^{-1}\tau\cdot v')=\sigma^{-1}\tau\cdot \pi_T(v')\,;\]
that is, 
\[\pi_T(u)=\pi_T(v)=\pi_T(v')\in (W/\!\!/T)^{\sigma^{-1}\tau}=\{\pi_T(0)\}\,.\]
 As 
 \[\pi_T(\sigma\cdot v)=\sigma\cdot\pi_T(v)=\sigma\cdot\pi_T(0)=\pi_T(0)\,,\] we conclude that $(1,\sigma)(\Ss_{W,T})\cap(1,\tau)(\Ss_{W,T})=\Nn_{W,T}\times\Nn_{W,T}$ as desired.

 Statement \ref{prop-sepvarS1} is clear since there are no nonconstant invariants. So suppose $r\gs 2$.  Observe that $0$ is not in any proper subset of the weights. In particular, the conditions of Corollary \ref{cor-sepvar1approx} are met and so
  \[\Ss_{W,T}=\overline{\Gamma_{W,T}}\ \bigcup \ \Nn_{W,T}\times \Nn_{W,T}\,.\]
 Furthermore, by Lemma \ref{lem-nullcone}, we have
 \[\Nn_{W,T}=\bigcup_{k=1}^r W_{\hat{k}}.\]
 It remains to establish which products $W_{\hat{k}}\times W_{\hat{\ell}}$ are in $\overline{\Gamma_{W,T}}$. 
 We first consider the case $r=2$. In this case the intersection of the two components is the origin. Hence, by Lemma~\ref{lem-NullconeGraph}, the decomposition of the separating variety is as in Statement~(\ref{prop-sepvarS2}). The ideal given in Statement~(\ref{prop-sepvarS2}) is exactly the toric ideal of the Lawrence lifting of the matrix of weights $A$, 
 \[\Lambda(A):=\left(\begin{array}{cc} A & 0\\ I & I\end{array}\right),\]
 where $I$ denotes the $(N+1)\times (N+1)$ identity matrix. Thus it is prime and has height $N$ (see \cite[Chapter 7, page 55]{bs:gbcp}). It is easy to see that this ideal vanishes on the graph, and so its zero set in $W\times W$ contains the closure of the graph. As $T$ is connected and $A$ has rank $1$, the closure of the graph is itself an irreducible variety of dimension $N+2$. It follows that the toric ideal associated to $\Lambda(A)$ is the defining ideal of $\overline{\Gamma_{W,T}}$.
 
 Let us now consider the case $r\gs 3$. First note that in this case, the intersection of two irreducible components of the nullcone will contain the weight space of at least one weight. Let $K$ be the support of this intersection. As the weight space of each weight has dimension at least 2, it follows that $\ker A_K\neq \{0\}$, and so by Lemma~\ref{lem-NullconeGraph}, the product $W_{\hat{k}}\times W_{\hat{\ell}}$ is never contained in $\overline{\Gamma_{W,T}}$, and the decomposition is as stated.

\end{proof}

\begin{lem}
We have $\Nn_{W,T}=\bigcup_{k=1}^r W_{\hat{k}}$, where $W_{\hat{k}}=\spn\{w_{i,j}\mid i\neq k\}$.
\end{lem}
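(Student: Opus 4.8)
The plan is to apply Lemma~\ref{lem-nullcone} directly to the representation $W$ of the torus $T$ described in Section~\ref{section-SV}, and simply identify the maximal subsets $I\subseteq[n]$ with $0\notin\conv(\wt(I))$. Recall that $W$ has basis $\{w_{i,\ell}\mid 1\ls i\ls r,\ 1\ls \ell\ls n_i\}$, and the weight attached to $w_{i,\ell}$ is $m_i$, where $m_i=e_i$ for $i=1,\dots,r-1$ and $m_r=-\sum_{i=1}^{r-1}e_i$ (the $H$-part of the group acts by scalars and hence plays no role in the nullcone, which only sees the torus). So the weight set $\{m_i\mid i\in I\}$ for a subset $I$ depends only on which of the $r$ ``blocks'' of indices $I$ meets; write $B(I)\subseteq[r]$ for that set of blocks.

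First I would observe that $\conv(\wt(I))$ contains $0$ if and only if $B(I)=[r]$, i.e.\ $I$ meets every block. Indeed, $m_1,\dots,m_r$ are the vertices of a simplex containing $0$ in its relative interior (the unique affine dependence $\sum_{i=1}^r m_i=0$ has all coefficients positive), so $0\in\conv\{m_i\mid i\in B\}$ forces $B=[r]$; conversely if $B(I)=[r]$ then $0=\tfrac1r\sum_{i=1}^r m_i\in\conv(\wt(I))$. Consequently $0\notin\conv(\wt(I))$ exactly when $B(I)\subsetneq[r]$, i.e.\ there is some block $k$ with $I\cap\{(k,\ell)\mid \ell\in[n_k]\}=\emptyset$.

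Next I would identify the maximal such $I$: for each fixed $k\in[r]$, the largest $I$ avoiding block $k$ is $I=\{(i,\ell)\mid i\ne k\}$, and these are evidently the maximal subsets with $B(I)\ne[r]$ (any $I$ with $B(I)\ne[r]$ is contained in one of them, and no one of them contains another since each omits a different block). The corresponding coordinate subspace $V_I$ is precisely $W_{\hat k}=\spn\{w_{i,\ell}\mid i\ne k\}$. Plugging this into Lemma~\ref{lem-nullcone} gives $\Nn_{W,T}=\bigcup_{k=1}^r W_{\hat k}$, as claimed.

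There is no real obstacle here; the only mild subtlety is making sure that ``$0$ in the convex hull of a sub-weight-set'' is handled with the ordinary (not relative-interior) convex hull as in the statement of Lemma~\ref{lem-nullcone}, and that the scalar action of $H=\mu_{a_1}\times\cdots\times\mu_{a_r}$ can be ignored because it contributes nothing to the nullcone of the combined group (equivalently, one may argue directly from $\kk[W]^G_+\subseteq\kk[W]^T_+$ together with the fact that monomials supported away from one block are never $T$-invariant of positive degree). If one prefers to phrase everything for the rank-$r$ torus acting on $R'=\kk[x_0,x_{i\ell}]$ with weights $m_0=-\sum a_ie_i$ and $m_{i\ell}=e_i$, the same computation applies verbatim with $r+1$ blocks, the extra block being $\{x_0\}$.
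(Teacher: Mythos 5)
Your proof is correct and follows essentially the same route as the paper: both apply Lemma~\ref{lem-nullcone} after observing that $0$ lies in the convex hull of a subset of the distinct weights only when all $r$ weights appear (the paper phrases this as any proper subset of the distinct weights being linearly independent, you phrase it via the simplex $\conv\{m_1,\dots,m_r\}$ containing $0$ in its relative interior), and then both identify the maximal index sets as those omitting exactly one block, giving $W_{\hat k}$.
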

\begin{proof}
 Follows directly from Lemma \ref{lem-nullcone} since in this case zero is not in the convex hull of any proper subset of $wt([n])$. Indeed, any proper subset of the $r+1$ distinct weights are linearly independent.
\end{proof}

\begin{lem}\label{lem-equal}
$(1,1)(\Ss_{W,T})=(1,\sigma)(\Ss_{W,T})$ if and only if $\sigma$ acts trivially on $W/\!\!/T$.
\end{lem}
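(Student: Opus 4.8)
The plan is to exploit the fact that $(1,\sigma)$ acts on $W\times W$ as an automorphism fixing the first factor, so that the statement is really a statement about when the image of a variety under such an automorphism equals the variety itself. First I would record the elementary observation that translates the problem: since $(1,\sigma)$ is an automorphism of $W\times W$ and $\Ss_{W,T}$ is $T$-stable in the second coordinate (as $\pi_T$ is $T$-invariant), one has $(1,1)(\Ss_{W,T})=(1,\sigma)(\Ss_{W,T})$ if and only if $\Ss_{W,T}=(1,\sigma^{-1})(\Ss_{W,T})$, i.e.\ if and only if $(u,v)\in\Ss_{W,T}\iff(u,\sigma^{-1}\cdot v)\in\Ss_{W,T}$ for all $u,v\in W$. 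Unwinding the definition of the separating variety, $(u,v)\in\Ss_{W,T}$ means $\pi_T(u)=\pi_T(v)$, so the condition becomes: for all $u,v$, $\pi_T(u)=\pi_T(v)\iff \pi_T(u)=\pi_T(\sigma^{-1}\cdot v)=\sigma^{-1}\cdot\pi_T(v)$ (using that $\sigma$ normalizes $T$, as $G=T\times H$ is abelian, so $\sigma$ descends to an automorphism of $W/\!\!/T$).

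Next I would argue both implications. For the ``if'' direction: if $\sigma$ acts trivially on $W/\!\!/T$, then $\sigma^{-1}\cdot\pi_T(v)=\pi_T(v)$ for every $v$, so the two conditions $\pi_T(u)=\pi_T(v)$ and $\pi_T(u)=\sigma^{-1}\cdot\pi_T(v)$ are literally identical, hence the two varieties coincide. For the ``only if'' direction, suppose $\Ss_{W,T}=(1,\sigma^{-1})(\Ss_{W,T})$. Pick any $v\in W$ and set $u=v$; then $(v,v)\in\Ss_{W,T}$, so by hypothesis $(v,\sigma^{-1}\cdot v)\in\Ss_{W,T}$, i.e.\ $\pi_T(v)=\pi_T(\sigma^{-1}\cdot v)=\sigma^{-1}\cdot\pi_T(v)$. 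Since $\pi_T\colon W\to W/\!\!/T$ is surjective ($T$ being reductive), every point of $W/\!\!/T$ is of the form $\pi_T(v)$, so $\sigma^{-1}$, and hence $\sigma$, fixes every point of $W/\!\!/T$; that is, $\sigma$ acts trivially on $W/\!\!/T$.

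The only genuinely delicate point is making sure the ``$\sigma$ descends to an automorphism of $W/\!\!/T$ commuting appropriately with $\pi_T$'' step is justified, and that the equivalence $(1,1)(\Ss_{W,T})=(1,\sigma)(\Ss_{W,T})\iff(1,1)(\Ss_{W,T})=(1,\sigma^{-1})(\Ss_{W,T})$ is clean: this follows because $\Ss_{W,T}$ is invariant under $(\tau,\tau)$ for all $\tau\in T$ and one can compose with $(1,\sigma^{-1})$ on both sides — but since $H$ centralizes $T$ inside the abelian group $G$, and $H$ acts on $W/\!\!/T=\Spec(\kk[W]^T)$ through ring automorphisms (as $\kk[W]^T$ is $H$-stable), this is immediate; no obstacle arises here. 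I do not expect any hard step: the lemma is essentially a formal unwinding of definitions together with surjectivity of $\pi_T$, and the main thing to be careful about is keeping the $\sigma$ versus $\sigma^{-1}$ bookkeeping consistent with the way $(1,\sigma)(\Ss_{W,T})$ was defined in the proof of Proposition~\ref{prop-sepvarSV}.
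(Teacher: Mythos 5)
Your proposal is correct and follows essentially the same route as the paper: both directions come down to the $H$-equivariance of $\pi_T$ (i.e.\ $\pi_T(\sigma\cdot w)=\sigma\cdot\pi_T(w)$, valid since $G$ is abelian so $\kk[W]^T$ is $H$-stable) together with the surjectivity of $\pi_T$ for the reductive group $T$. Your use of diagonal points $(v,v)$ in the ``only if'' direction is a slightly leaner bookkeeping than the paper's chain of equalities for an arbitrary $(u,v)\in\Ss_{W,T}$, but the underlying argument is the same.
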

\begin{proof}
Suppose $(1,1)(\Ss_{W,T})=(1,\sigma)(\Ss_{W,T})$. Then for any $(u,v)\in \Ss_{W,T}$ there exists $(u',v')\in \Ss_{W,T}$ such that $(u,v)=(u',\sigma\cdot v')$. It follows that
\[\pi_T(u)=\pi_T(v)=\pi_T(\sigma\cdot v')=\sigma\cdot\pi_T(v')=\sigma\cdot\pi_T(u')=\sigma\cdot \pi_T(u)\,,\]
and so $\pi_T(u)=\sigma \pi_T(u)$. As we can choose $u$ arbitrarily and $\pi_T$ is surjective (since $T$ is a reductive group, see for example \cite[Lemma 2.3.1]{hd-gk:cit}), it follows that $\sigma$ acts trivially on $W/\!\!/T$.

On the other hand, suppose $\sigma$ acts trivially on $W/\!\!/T$, then of course so does $\sigma^{-1}$. For any $(u,v)\in\Ss_{W,T}$, we will have $(u,v)=(u, \sigma\cdot(\sigma^{-1}\cdot v))\in (1,\sigma)(\Ss_{W,T})$, since
\[\pi_T(u)=\pi_T(v)=\sigma^{-1}\cdot\pi_T(v)=\pi_T(\sigma^{-1}\cdot v)\,.\]
\end{proof}




\section{Upper bounds on the size of separating sets}\label{section-UpperBounds} 

One can find an upper bound on the size of separating sets, given some knowledge of the secant variety of the embedding. In this section, for a projective variety $X \subseteq \PP^n$, we define the \emph{secant set} of $X$ to be

\[ \sigma (X) = \bigcup_{ x,x' \in X, x \not= x'} \langle x , x' \rangle \subseteq \PP^n,\]
where $\langle \ \rangle$ denotes linear span. The \emph{secant variety} of $X$ is the closure ${\Sec(X)}=\overline{\sigma(X)}$ of the secant set of $X$.

If $p \in \PP^n$, we write $\pi_p$ for projection from $p$ onto a hyperplane.

\begin{lem}\label{lem-spec-iso-projection} Let $A$ be a graded $\kk$-algebra generated in a single degree, and let $X=\proj A$. If $p\notin \sigma(X)$, then $\pi_p$ induces a bijective map of $\Spec A$ onto its image.
\end{lem}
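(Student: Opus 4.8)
The plan is to turn this into an elementary statement about linear projections of an affine cone. First I would fix the degree $d$ in which $A$ is generated, pick a $\kk$-basis $f_0,\dots,f_n$ of $A_d$, and use the surjection $\Sym_\kk(A_d)\twoheadrightarrow A$, $x_i\mapsto f_i$, to realize $X=\proj A$ as a closed subvariety of $\PP^n$ and $\Spec A$ as the affine cone $\widehat X\subseteq\AA^{n+1}$ over $X$ (with its vertex $0$), so that every nonzero $v\in\widehat X$ maps to $[v]\in X$. Choosing coordinates so that $p=[0:\cdots:0:1]$, the map on $\Spec A$ ``induced by $\pi_p$'' is then the restriction to $\widehat X$ of the everywhere-defined linear surjection $L\colon\AA^{n+1}\to\AA^n$ that forgets the last coordinate, whose kernel is the line $\kk\widetilde p$ through the chosen lift $\widetilde p=(0,\dots,0,1)$ of $p$. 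The lemma thus reduces to showing that $L$ is injective on the $\kk$-points of $\widehat X$.

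Before the main argument I would record that $p\notin X$: since $X$ has more than one point (the one-point case is a degenerate embedding and may be set aside), $X$ lies on its own secant lines, so $X\subseteq\sigma(X)$, and $p\notin\sigma(X)$ forces $p\notin X$. For injectivity, suppose $a\neq b$ are $\kk$-points of $\widehat X$ with $L(a)=L(b)$. Then $a-b$ is a nonzero vector in $\ker L$, so $\widetilde p\in\spn_\kk\{a,b\}\setminus\{0\}$. If $a,b$ are linearly dependent, then $\widetilde p$ is a nonzero multiple of whichever of $a,b$ is nonzero --- say $a$ --- so $p=[a]\in X$, contradicting $p\notin X$. If $a,b$ are linearly independent, then $[a]\neq[b]$ are points of $X$ and $p=[\widetilde p]$ lies on the secant line $\langle[a],[b]\rangle$, so $p\in\sigma(X)$, again a contradiction. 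Hence $L$ is injective on $\widehat X$, and since it is of course surjective onto its image, it is a bijection onto its image.

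I do not expect any substantial obstacle: there is no hard geometry here, only the observation that two distinct points of an affine cone with the same image under a linear projection span a line or plane through the origin whose projectivization is, or passes through a point of, $X$ and contains the center $p$. The only things requiring care are bookkeeping conventions: identifying ``the morphism induced by $\pi_p$'' with an everywhere-defined linear projection on $\Spec A$, even though $\pi_p$ is merely a rational map on $\PP^n$ (undefined at $p$); correctly tracking the vertex of the cone in the case split, since it is exactly the degenerate cases ($a=0$, or $[a]=[b]$) that yield $p\in X$ rather than $p$ on an honest secant; and noting that $X\subseteq\sigma(X)$ requires $X$ to have at least two points, which holds in every situation where the lemma is applied.
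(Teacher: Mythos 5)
Your proof is correct and takes essentially the same route as the paper: choose coordinates so that $p=[0:\cdots:0:1]$, lift $\pi_p$ to the linear projection on the affine cone that forgets the last coordinate, and note that two distinct cone points with the same image would force $p$ to lie in $X$ or on a secant line, contradicting $p\notin\sigma(X)$. Your explicit case split (proportional versus independent lifts) and the remark that $X\subseteq\sigma(X)$ when $X$ has at least two points merely spell out what the paper's proof leaves implicit.
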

\begin{proof} We choose coordinates so that $p=[0:\cdots:0:1]$. We lift the map $\pi_p$ to $\pi_{p,\text{aff}}:\mathbb{A}^{n+1} \to \mathbb{A}^{n}$ as $(x_1 ,\dots ,x_n , x_{n+1}) \mapsto (x_1 , \dots ,x_n)$. Now, the fiber over $\pi_{p,\text{aff}}|_{\Spec A}^{-1}(0)$ is just 0 since otherwise $[ 0:\cdots :0:1]$) would have to be in $X$, a contradiction. Suppose there are two points in $\Spec A$ that are mapped to the same point. The only possibility for this is if they are of the form $(a_1 ,\dots, a_n , b)$ and $(a_1 ,\dots, a_n , b') \in \Spec A$. But this forces $[0: \cdots : 0 :1]\in\sigma(X)$, again a contradiction.
\end{proof}

\begin{cor}\label{cor-upperbound}
Let $R$ be a subalgebra of a standard graded polynomial ring. Suppose that there is a separating set for $R$ consisting of homogeneous polynomials of the same degree and let $A\subseteq R$ be the subalgebra it generates. Let $X=\proj A$. Any set of $\dim \Sec(X)+1$ generic linear combinations of the original separating set will be a separating set. 
 \end{cor}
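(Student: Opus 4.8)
The plan is to reduce both assertions to the two lemmas just proved, using a standard parameter-count for generic projections together with the injectivity-versus-separating-set dictionary. Write $E = \{f_1, \dots, f_k\}$ for the given separating set of homogeneous polynomials of common degree, so that $A = \kk[E]$ and $X = \proj A \subseteq \PP^{k-1}$ via the $f_i$ as coordinates. Since $R$ is reduced and the $f_i$ separate, the inclusion $A \subseteq R$ corresponds to a finite injective morphism $\Spec R \to \Spec A$; hence a set $E'$ of linear combinations of the $f_i$ is again a separating set for $R$ precisely when the corresponding linear projection $\Spec A \to \Spec \kk[E']$ is injective on $\Spec A$. Thus it suffices to control when a linear projection of $\Spec A$ is injective, and for that one reduces, via the (affine cone over the) projective picture, to controlling the secant set of $X$.

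For the first assertion, I would iterate Lemma \ref{lem-spec-iso-projection}: if $p \notin \sigma(X)$, then projection $\pi_p$ is injective on $\Spec A$, and the image $\pi_p(X) \subseteq \PP^{k-2}$ is again $\proj$ of an algebra generated in a single degree (the projected coordinates), so the hypotheses are preserved. A point $p \notin \Sec(X)$ is in particular not in $\sigma(X)$, and a generic point of $\PP^{k-1}$ avoids the closed set $\Sec(X)$ as long as $\dim \Sec(X) < k-1$; choosing such $p$ corresponds to replacing $E$ by $k-1$ generic linear combinations of its members. Repeating brings us down to $\dim \Sec(X) + 1$ coordinates: once we reach ambient dimension $\dim \Sec(X)$, i.e. $\PP^{\dim \Sec(X)}$, the secant variety fills the space and we stop. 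At each stage genericity of the linear combinations is what guarantees the projection center avoids $\Sec$ of the current image; one should note that $\dim \Sec(\pi_p(X)) \le \dim \Sec(X)$ at each step (the projection $\Sec(X) \to \PP^{k-2}$ is dominant onto $\Sec(\pi_p(X))$ when $p \notin \Sec(X)$, as in the proof of Lemma \ref{lem-closed-secant}), so the target dimension $\dim \Sec(X) + 1$ is reached in finitely many steps and the final $\dim \Sec(X)+1$ functions form a separating set.

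For the second assertion, suppose in addition $\sigma(X) \subsetneq \Sec(X)$. Carry out the generic projections as above until the ambient space is $\PP^{\dim \Sec(X)}$; by Lemma \ref{lem-closed-secant}, non-closedness of the secant set is preserved under each generic projection (the center being chosen outside $\Sec$), so the image $X' \subseteq \PP^{\dim \Sec(X)}$ still has $\sigma(X') \subsetneq \Sec(X') = \PP^{\dim \Sec(X)}$. Hence there exists a point $p \in \PP^{\dim \Sec(X)} \setminus \sigma(X')$; by Lemma \ref{lem-spec-iso-projection} the projection $\pi_p$ is injective on $\Spec A' = \Spec A$, and it produces a separating set of size $\dim \Sec(X)$. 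Translating back through the chain of linear substitutions, these are linear combinations of the original $f_i$, giving the claimed separating set of size $\dim \Sec(X)$.

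The main obstacle I anticipate is the bookkeeping in the iteration: one must verify that the class of hypotheses ("separating set of homogeneous polynomials of a single degree generating $A$ with $X = \proj A$") is genuinely stable under a generic linear projection — in particular that $\proj$ of the image algebra is the projected variety $\pi_p(X)$ and that the separating property transfers through the induced map of spectra — and that $\dim \Sec$ is non-increasing along the chain so the process terminates at exactly $\dim \Sec(X) + 1$ (resp.\ $\dim \Sec(X)$) coordinates. The genericity argument itself is routine (avoiding a proper closed subset of a projective space), and Lemmas \ref{lem-closed-secant} and \ref{lem-spec-iso-projection} do the real geometric work.
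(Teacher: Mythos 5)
Your proposal is correct and follows essentially the same route as the paper's proof: iterate generic linear projections from points off the secant variety, invoking Lemma~\ref{lem-spec-iso-projection} for injectivity at the level of $\Spec A$ and Lemma~\ref{lem-closed-secant} to see that non-closedness of the secant set survives the projections, which permits the one extra projection for the second statement. (Your incidental claim that $\Spec R \to \Spec A$ is a \emph{finite} morphism is neither needed nor clear in this generality, but nothing in your argument actually uses it.)
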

 \begin{proof} Without loss of generality, we may assume that $R=A$, and that the given separating set consists of $s$ linearly independent elements $\{f_1,\dots,f_s\}$. There is a surjection $\kk[y_1,\dots,y_s] \twoheadrightarrow A$ given  by sending $y_i$ to $f_i$; this is degree-preserving if each $y_i$ is assigned the (same) degree of each $f_i$. This map corresponds to the inclusion of $X$ into a projective space $\PP^{s-1}$.
 
 Suppose that there exists a point $p\notin \sigma(X)$. Then $\pi_p:\PP^{s-1} \twoheadrightarrow \PP^{s-2}$ descends to a map from $X$ to its image $\pi_p(X)\subseteq \PP^{s-2}$. By Lemma~\ref{lem-spec-iso-projection}, an affine lift of this map to the homogeneous coordinate rings is a bijection. Since $\pi_p(X)\subseteq \PP^{s-2}$, its homogeneous coordinate ring $A'$ is generated by $s-1$ homogeneous elements (that, by definition of projection, are linear combinations of the $f_i$'s), and since $A'\hookrightarrow A$ is bijective on Spec, the generators of $A'$ are a separating set for $A$.
 
 Now, suppose that $s>\dim \Sec(X)+1$. We claim that $s-1$ generic linear combinations of $\{f_1,\dots,f_s\}$ form a separating set for $A$, and that, if they generate the algebra $A'$, $\dim\Sec(\proj(A')) \ls \Sec(X)$. Indeed, in this case a generic point of $\PP^{s-1}$ lies outside of $\Sec(X)$. By the paragraph above, it follows that projection from a generic point of $\PP^{s-1}$ yields a separating set of size $s-1$ as the generators of the homogeneous coordinate ring of the image; the generators of a (generic) projection are simply a (generic) linear combination of the $f_i$'s. Then, since projection preserves linear incidence, $\Sec(X)$ surjects onto $\Sec(\pi_p(X))$.
 
 The first claim of the corollary follows, since, given $\{f_1,\dots,f_s\}$, one may repeatedly pick a generic point and project until the cardinality of the separating set is no larger than the dimension of the secant variety of $X$.
 \end{proof}
 
 \begin{rmk} The corollary above does not require $R$ to be the invariant ring of a representation of a torus.
 \end{rmk}

\begin{rmk} The statement of Corollary~\ref{cor-upperbound} may also be justified as follows. Recall that the \emph{analytic spread}  of an ideal $I$, $\ell(I)$, in a graded ring $(R,\fm,\kk)$ is the smallest size of a generating set for a minimal reduction of an ideal; if the residue field of the ring is infinite then $\ell(I)$ generic linear combinations of the minimal generators generates a minimal reduction. This number also coincides with the dimension of the special fiber ring $R[It]\otimes \kk$. If $I$ is generated in a single degree $d$, the special fiber ring is a subalgebra of $R$ generated by minimal generators of $I$. See \cite[Chapter~5]{SwansonHuneke} for a thorough treatment of analytic spread.

We claim that the special fiber ring of $\I_{V,G}$ is the coordinate ring of $\Sec(\proj(R^G))$. Indeed, this secant variety is the projectivization of the set of points of the form 
\begin{align*}
&\big(\,a f_1(v) + a' f_1(v')\,,\,\dots\,,\,a f_t(v) + a' f_t(v')\,\big) \\
&=\big(\, f_1(v/\sqrt[d]{a}) - f_1(v'/\sqrt[d]{-a'})\,,\,\dots\,,\,f_t(v/\sqrt[d]{a}) - f_t(v'/\sqrt[d]{-a'})\,\big)
\end{align*}
 where $a,a'\in \kk, v,v' \in V,$ and $f_1,\dots,f_t$ are minimal generators for $R^G$, and hence its coordinate ring is isomorphic to $\kk[f_1\otimes 1 - 1 \otimes f_1, \dots, f_t\otimes 1 - 1 \otimes f_t]$. 
 
 Consequently, the analytic spread of $\I_{V,G}$ is $s=\dim \Sec(\proj(R^G)) +1$. For a generic $s \times t$ matrix of scalars $A$, we have that $[f_1\otimes 1 - 1 \otimes f_1, \dots, f_t\otimes 1 - 1 \otimes f_t] \cdot A$ generates a minimal reduction $J$ of $\I_{V,G}$, and hence agrees with $\I_{V,G}$ up to radical. But then, setting $[f_1, \dots, f_t] \cdot A = [g_1, \dots, g_s]$, we have that 
\[J=(g_1\otimes 1 - 1 \otimes g_1, \dots, g_s\otimes 1 - 1 \otimes g_s)\,.\]
 Thus, $(g_1,\dots,g_s)$ is a separating set for $G$.
\end{rmk}

\begin{eg}[Veronese varieties]\label{eg-veroneses-example}
We consider a Veronese variety that is the image of the closed embedding $ \PP^{n_1-1} \hookrightarrow \PP^N$ given by the line bundle $\mcal{O}(a_1)$ and we suppose that $a_1$ is not 1 or a power of $\chara \kk$. By Lemma \ref{lem-nonmodSV}, it is enough to consider the nonmodular case. As discussed in Section \ref{section-SV}, its ring of homogeneous coordinates is equal to the ring of invariants of the cyclotomic group $\mu_{a_1}$ acting diagonally. The secant variety of this Veronese variety has dimension $2(n-1)$ when $d=2$ and $2(n-1)+1$ otherwise (classical). Hence Corollary \ref{cor-upperbound} implies that the minimal size of a separating set for the affine cone is at most $2n-1$ when $d=2$ and $2n$, otherwise. On the other hand, for all $d$, one can construct a separating set of size $2n-1$ (see \cite[Proposition 5.2.2]{ed:si}) and this is the minimal size of a separating set (follows from \cite[Theorem 3.4]{ed-jj:silc}). Note that the invariants forming this separating set are linear combinations of monomials from the minimal generating set given above, that is, they come from a (nongeneric) linear projection of the Veronese variety.\done
\end{eg}

\begin{prop}\label{prop-UBSV}[Upper bounds on the size of separating sets for the affine cone over Segre-Veroneses]
We consider the Segre-Veronese variety which is the image of the closed embedding $\prod_{i=1}^r \PP^{n_i-1} \hookrightarrow \PP^N$ given by the line bundle $\mcal{O}(a_1,\dots,a_r)$. Then the minimal size of a separating set for the affine cone is bounded above by
\begin{enumerate}
 \item $2n_1-1$, if $r=1$;\label{prop-secantBoundVero}
 \item $2(n_1+n_2)-4$, if $r=2$ and $a_1,a_2$ are either 1 or a power of $\chara\kk$.\label{prop-secantBound2}
 \item $2\sum_{i=1}^r n_i  - 2r +2$, in all other cases.\label{prop-secantBound3}
\end{enumerate}
\end{prop}
\begin{proof}
Case \ref{prop-secantBoundVero} follows from the construction in \cite[Proposition 5.2.2]{ed:si}. In case~(\ref{prop-secantBound2}), we may assume that $(a_1,a_2)=(1,1)$ by Lemma \ref{lem-nonmodSV}. Then the secant variety is the space of rank 3 matrices, which has the dimension indicated.
In general, and hence in case~(\ref{prop-secantBound3}), the dimension of the secant variety is bounded above by $2(\sum (n_i-1))+1$.
\end{proof}

\begin{eg}\label{333example} In the case of a Segre product with two factors and $n_1=3$, the set
\begin{align*}&x_{1,1} x_{2,1}, \ x_{1,1} x_{2,2},\ x_{1,2} x_{2,1} ,\ x_{1,2} x_{2,n_2},\ x_{1,3} x_{2,{n_2-1}} ,\ x_{1,3} x_{2,n_2} ,\\ &u_i:=x_{1,1} x_{2,i+1} - x_{1,2} x_{2,i},\  v_i:=x_{1,2} x_{2,i} - x_{1,3} x_{2,i-1},\ \quad  i=2,\dots,n_2-1
\end{align*}
is a separating set. Indeed, by induction on $n_2$ it suffices to show that the values of $x_{1,1} x_{2,3}, x_{1,2} x_{2,2},$ and $x_{1,3} x_{2,1}$ can be recovered from those of $x_{1,1} x_{2,1}, x_{1,1} x_{2,2},$ $x_{1,2} x_{2,1}, u_2, $ and $v_2$. If $x_{1,1} x_{2,1} \neq 0$, then one has $x_{1,2} x_{2,2} = \frac{ x_{1,1} x_{2,2} \cdot x_{1,2} x_{2,1} }{x_{1,1} x_{2,1}}$. If $x_{1,1} x_{2,1} = 0$ and $x_{1,2} x_{2,1} \neq 0$, then $x_{1,1} = 0$, so $x_{1,3} x_{2,1} = 0$, from which $x_{1,2} x_{2,2}$ and $x_{2,1} x_{2,3}$ can be obtained. The case $x_{1,1} x_{2,1} = 0$ and $x_{1,1} x_{2,2} \neq 0$ is similar. Finally, if $x_{1,1} x_{2,1} = x_{1,2} x_{2,1} = x_{1,1} x_{2,2} = 0$, then at most one of $x_{1,1} x_{2,3}, x_{1,2} x_{2,2},$ and $x_{1,3} x_{2,1}$ is nonzero. If one of these is nonzero, then the two of $u_2, v_2,$ and $x_{1,3} x_{2,1} - x_{1,1} x_{2,3}= - u_2 - v_2$ containing that monomial are equal to it (up to sign), while if all three monomials are zero, these three binomials are zero. Thus from the values of $u_2$ and $v_2$, one can determine which of $x_{1,1} x_{2,3}, x_{1,2} x_{2,2},$ and $x_{1,3} x_{2,1}$ is nonzero, and their value.\done
\end{eg}


In case~(\ref{prop-secantBound2}) of Proposition \ref{prop-UBSV} the separating set satisfying the bound can be obtained by taking generic linear combinations of a generating invariant monomials. This is not true in general, as illustrated in Example \ref{eg-veroneses-example} above. A significant difference between the two cases is that the union of points belonging to secant lines is closed in case~(\ref{prop-secantBound2}) of Proposition \ref{prop-UBSV} but not in general, as for example for general Veronese varieties. With this in mind, we determine when the set of secant lines fills the secant variety of a Segre-Veronese variety. The following proposition is well-known in the case of Segre varieties; it translates to the fact that closest rank 2 approximation of a tensor is an ill-posed problem.

\begin{prop}\label{prop-notclosed} Let $X$ be the Segre-Veronese variety that is the image of the closed embedding $\prod_{i=1}^r \PP^{n_i-1} \hookrightarrow \PP^N$ given by the line bundle $\mcal{O}(a_1,\dots,a_r)$. If $r>2$ or $r=2$ and $(a_1,a_2)\neq(1,1)$, then the set of secant lines to $X$ does not fill the secant variety of $X$.
\end{prop}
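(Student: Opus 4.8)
The plan is to exhibit, for each such Segre-Veronese variety $X$, an explicit point of $\Sec(X)$ that does not lie on any secant line. The natural candidate comes from the tensor-rank picture: in the Segre case $\mathcal{O}(1,\dots,1)$ with $r>2$, a generic tensor of the form $v_1\otimes\cdots\otimes v_r$ ``differentiated in one factor'' — that is, a limit of $\frac{1}{t}\big((v_1+tw_1)\otimes v_2\otimes\cdots\otimes v_r - v_1\otimes v_2\otimes\cdots\otimes v_r\big)$ as $t\to 0$ — lies in the secant variety but has border rank $2$ and rank $>2$ for generic choices. For general $(a_1,\dots,a_r)$ one does the same with the Veronese/Segre-Veronese parametrization: take a one-parameter family of points on $X$, form the secant chord joining it to a fixed point of $X$, and let the parameter degenerate so that the two points collide; the limiting point lies in $\Sec(X)=\overline{\sigma(X)}$ by construction, and the task is to show it is not itself on a secant line.

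**Key steps, in order.** First I would reduce, using Lemma~\ref{lem-nonmodSV} and the torus description of Section~\ref{section-SV}, to working with the explicit monomial parametrization of $X$ (or of the affine cone $Y$), so that points of $X$ are homogeneous degree-$(a_1,\dots,a_r)$ ``decomposable'' elements and points of $\sigma(X)$ are sums of two such. Second, I would write down the degenerate point $z$ explicitly: fix general vectors and form $z = \lim_{t\to 0}\frac{1}{t}\big(F(p(t)) - F(p_0)\big)$ where $F$ is the parametrizing map and $p(t)\to p_0$; one checks this limit exists and lies on the chord $\langle F(p(t)), F(p_0)\rangle$ for $t\ne 0$, hence $z\in \Sec(X)$. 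Third — the heart of the argument — I would show $z\notin\sigma(X)$: suppose $z = F(q) + F(q')$ (or a scalar combination) for two points $q,q'$ of $\prod\PP^{n_i-1}$, and derive a contradiction by comparing ``slices'' or partial-derivative/contraction structure. Concretely, the point $z$ is a genuine first-order deformation supported infinitesimally near $p_0$ in a single factor, so every ``flattening'' of $z$ into a matrix (grouping the $i$-th tensor slot against the rest) has rank exactly $2$ with a very specific row/column span containing a nilpotent direction, whereas $F(q)+F(q')$ has all such flattenings of rank $\le 2$ but with spans that are genuinely spanned by two honest decomposable vectors — for a sufficiently generic choice of the defining vectors these span conditions are incompatible. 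The hypothesis $r>2$ (or $r=2, (a_1,a_2)\ne(1,1)$) is exactly what guarantees enough independent flattenings/derivative directions to force the contradiction; when $r=2$ and $(a_1,a_2)=(1,1)$ the variety is the ordinary Segre of two projective spaces, whose secant set is the determinantal variety of matrices of rank $\le 2$, which is closed, so the hypothesis cannot be dropped.

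**Main obstacle.** The genuinely delicate step is the third one: ruling out that the degenerate point $z$ happens to be an honest secant point. The inequality ``rank $>$ border rank at $z$'' is classical for tensors but needs a uniform, characteristic-free, genericity-robust argument that also covers the Veronese twisting ($a_i>1$). I expect to handle this by the flattening/slice-rank bookkeeping sketched above, choosing the base vectors in sufficiently general position (a nonempty Zariski-open condition suffices, since we only need \emph{one} point of $\Sec(X)\setminus\sigma(X)$), and being careful that the argument uses only linear algebra over $\kk$ valid in all characteristics. A cleaner alternative, if available, is a dimension count: show $\sigma(X)$ and $\Sec(X)$ have the same dimension (nondegeneracy) but that $\sigma(X)$ is not closed by producing the boundary point above — then one only needs $z\in\overline{\sigma(X)}\setminus\sigma(X)$, and the non-membership in $\sigma(X)$ can be argued orbit-theoretically via the $\GL$-actions on the factors, since the boundary of the rank-$\le 2$ locus is a lower-dimensional union of orbits that $z$ visibly avoids.
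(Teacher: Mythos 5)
Your overall route is the same as the paper's (degenerate a family of chords to a boundary point of $\Sec(X)$ and show that point lies on no secant line), but as written the construction has a genuine gap: the explicit candidate you propose, the limit of $\frac{1}{t}\bigl((v_1+tw_1)\otimes v_2\otimes\cdots\otimes v_r - v_1\otimes\cdots\otimes v_r\bigr)$, equals $w_1\otimes v_2\otimes\cdots\otimes v_r$, a decomposable tensor. Differentiating in a \emph{single} Segre factor always lands you back on the cone over $X$ itself, hence inside $\sigma(X)$, and proves nothing. To get a point of $\Sec(X)\setminus\sigma(X)$ you must take a tangent direction that moves at least three Segre factors simultaneously (for $r>2$), or moves inside a factor carrying a Veronese twist $a_i\geq 2$ (for $r=2$); this is precisely where the hypothesis ``$r>2$, or $r=2$ and $(a_1,a_2)\neq(1,1)$'' enters, and it is exactly what the paper does: for $r>2$ it perturbs the first three factors and obtains in the limit the sum of three decomposable terms (the classical border-rank-$2$, rank-$3$ tensor), and for $r=2$ it uses a tangent vector of the form $e_{a_1-1,1,0,\dots}\otimes e_{a_2,0,\dots}+e_{a_1,0,\dots}\otimes e_{a_2-1,1,\dots}$. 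Your more general formula $z=\lim_{t\to 0}\frac{1}{t}(F(p(t))-F(p_0))$ can be salvaged, but only if you note that the direction of $p(t)$ matters and choose it generic enough; your own one-factor example shows genericity is not automatic from the way you set it up.

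The second, acknowledged, gap is the heart of the matter: proving $z\notin\sigma(X)$. Your flattening/slice-span sketch is plausible (for $r=2$, $a_1\geq 2$ one can argue that the column span of the flattening of $z$ is a pencil containing only one Veronese point, so it cannot be spanned by two of them), but it is not carried out, and the case analysis (degenerate flattenings, proportional factors, small $n_i$, characteristic $2$ issues with ``squares of linear forms'' versus the monomial parametrization) is where the real work lies. The paper avoids this by citing the known fact that the three-term tangent tensor has rank $3$ (de Silva--Lim) when $r>2$, and by an explicit verification (a direct computation for $\mcal{O}(2,1)$, then a reduction of the general $r=2$ case to it). Also, your proposed ``cleaner alternative'' via nondegeneracy would be awkward here, since the paper derives nondegeneracy of $\Sec(X)$ as a corollary of this proposition; and the claim that the boundary of the rank-$\leq 2$ locus is a union of finitely many orbits visibly avoided by $z$ is not justified for general Segre--Veronese varieties.
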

\begin{proof} We will write vectors in tensor notation. Let $\phi_{a_i}$ denote the Veronese embedding of $\PP^{n_i-1}$ degree $a_i$, and set $e_{\alpha}$ to be the basis vector in the coordinate corresponding to the monomial with exponent $\alpha$ under the Veronese map.
First, let $r>2$. Set \[w = \phi_{a_4}(1,0,\dots,0) \otimes \cdots \otimes \phi_{a_r}(1,0,\dots,0)\,,\] 
and
\[
\begin{aligned}  v_{\lambda} = \lambda \cdot \phi_{a_1}(1, \lambda^{-1}, 0, \dots, 0) \otimes \phi_{a_2}(1, \lambda^{-1}, 0, \dots, 0) \otimes \phi_{a_3}(1, \lambda^{-1}, 0, \dots, 0) \otimes w\\
 -  \lambda \cdot \phi_{a_1}(1, 0, \dots, 0) \otimes \phi_{a_2}(1, 0, \dots, 0) \otimes \phi_{a_3}(1, 0, \dots, 0) \otimes w
 \end{aligned} \]
 for $\lambda \in \kk$,
 and 
 \[ 
\begin{aligned}
v_{\infty}= \ &e_{a_1-1,1,0,\dots,0}\otimes e_{a_2,0,0,\dots,0} \otimes  e_{a_3,0,0,\dots,0} \otimes w \\
&+ e_{a_1,0,0,\dots,0}\otimes e_{a_2-1,1,0,\dots,0} \otimes e_{a_3,0,0,\dots,0}\otimes w \\
&+e_{a_1,0,0,\dots,0}\otimes e_{a_2,0,0,\dots,0} \otimes  e_{a_3-1,1,0,\dots,0} \otimes w\,.
\end{aligned}
\]
 One may write 
\[\phi_{a_i}(1, \lambda^{-1}, 0, \dots, 0)=e_{a_1,0,0,\dots,0}+\lambda^{-1}e_{a_1-1,1,0,\dots,0}+\text{higher order terms in $\lambda^{-1}$}\,.\]
 We then have that $v_{\lambda}=v_{\infty} + \, \text{terms with negative powers of $\lambda$}$. One thus sees that $\{ v_\lambda \ | \ \lambda \in \kk\} \cup \{ v_{\infty} \}$ forms a locally closed subset in $\PP^N$. Clearly, $\{ v_\lambda \ | \ \lambda \in \kk\}$ is contained in the set of secant lines to $X$, but $v_{\infty}$ is a rank 3 tensor, see e.g., \cite{vds-lhl:tripblrap}, and hence is not contained in the secant set, but is in the secant variety.
 Now let $r=2$. Set
 \[ \begin{aligned} v_{\lambda}= \ &\lambda \cdot \phi_{a_1}(1, \lambda^{-1}, 0, \dots, 0) \otimes \phi_{a_2}(1, \lambda^{-1}, 0, \dots, 0) \\
 &-  \lambda \cdot \phi_{a_1}(1, 0, \dots, 0) \otimes \phi_{a_2}(1, 0, \dots, 0)
 \end{aligned}\]
for $\lambda \in \kk$, and
\[ v_{\infty}= e_{a_1-1,1,0,\dots,0} \otimes e_{a_2,0,  \dots,0} +  e_{a_1,0,  \dots,0} \otimes e_{a_2-1,1,0,\dots,0} \,,\]
 One again verifies that $\{ v_\lambda \ | \ \lambda \in \kk\} \cup \{ v_{\infty} \}$ is locally closed. It remains to show that $v_{\infty}$ does not lie on a secant line. In the case that $a_1=2, a_2=1$, this condition can be verified in Macaulay2 \cite{M2} by writing a system of equations for this vector to be expressed as the sum of two elements in $X$, and seeing that the ideal it generates is the trivial ideal. In the case of larger $a_i$, one sees that the coordinates corresponding to the  $\mathcal{O}(2,1)$ case give the same system of equations multiplied by a uniform scalar, and hence again have no solution.
 \end{proof}

When the secant set of a variety $X\subseteq \PP^N$ is not closed, by Lemma~\ref{lem-spec-iso-projection}, one may project from a point in $\Sec(X) \setminus \sigma(X)$; since this projection is not accounted for in the proof of Corollary~\ref{cor-upperbound}, one may hope that the bound given there can be harpened by one when the secant set is not closed. This is indeed the case in Examples~\ref{eg-veroneses-example}~and~\ref{333example}. This motivates the following.

\begin{conj}
We consider the Segre-Veronese variety which is the image of the closed embedding $\prod_{i=1}^r \PP^{n_i-1} \hookrightarrow \PP^N$ given by the line bundle $\mcal{O}(a_1,\dots,a_r)$. Then the minimal size of a separating set for the affine cone is bounded above by $2\sum_{i=1}^r n_i  - 2r +1$.
\end{conj}


\section{Lower bounds on the size of separating sets}\label{section-LowerBounds}

In this section, we focus on the affine cones over Segre-Veronese varieties. We will give lower bounds for the sizes of separating sets. In most cases, these lower bounds agree with the upper bounds in the previous section, thus giving the precise cardinality of a minimal separating set. Our technique is based on the following observation and relies on the use of local cohomology (\cite{sbi-gjl-al-cm-em-aks-uw:thlc} provides a good reference).

\begin{lem}\label{lem-seprankLC}\cite[Section~3]{ed-jj:silc}
Let $G$ act linearly on $V$. Then the minimal size of a separating set for $G$ is bounded below by the maximum $i$ such that $\HH{i}{\I(\Ss_{V,G})}{\kvv}$ is nonzero.
\end{lem}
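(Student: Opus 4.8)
The statement to prove is Lemma~\ref{lem-seprankLC}: the minimal size of a separating set is bounded below by the largest $i$ with $\HH{i}{\I(\Ss_{V,G})}{\kvv}\neq 0$. The plan is to reduce the claim to the standard relationship between the number of equations cutting out a variety (up to radical) and the non-vanishing of local cohomology. First I would recall that if $E=\{g_1,\dots,g_m\}$ is a separating set for $G$, then by the characterization of separating sets recalled in Section~\ref{section-SepVar}, the separating variety is exactly the zero set $\V_{V\times V}(g_j\otimes 1-1\otimes g_j\mid j=1,\dots,m)$. Equivalently, the ideal $J=(g_j\otimes 1-1\otimes g_j\mid j)$ in $\kvv$ has radical equal to $\I(\Ss_{V,G})$. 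Thus a separating set of size $m$ produces $m$ elements of $\kvv$ whose radical is the defining ideal of the separating variety; in other words the \emph{arithmetic rank} $\operatorname{ara}(\I(\Ss_{V,G}))$ is at most $m$.

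The second step is the general fact from local cohomology theory: for any ideal $\mathfrak{a}$ in a Noetherian ring $S$, if $\mathfrak{a}$ is generated up to radical by $m$ elements, then $\HH{i}{\mathfrak{a}}{S}=0$ for all $i>m$. This is immediate from the fact that local cohomology depends only on the radical of the ideal together with the \v Cech complex computation: $\HH{\bullet}{\mathfrak{a}}{S}$ is computed by the \v Cech complex on $m$ generators $h_1,\dots,h_m$ of an ideal with the same radical, and that complex is concentrated in cohomological degrees $0,\dots,m$. Applying this with $S=\kvv$, $\mathfrak{a}=\I(\Ss_{V,G})$, and $h_j=g_j\otimes 1-1\otimes g_j$, we get $\HH{i}{\I(\Ss_{V,G})}{\kvv}=0$ for all $i>m$. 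Contrapositively, if $\HH{i}{\I(\Ss_{V,G})}{\kvv}\neq 0$ for some $i$, then $m\geq i$ for every separating set of size $m$; taking the maximal such $i$ gives the stated lower bound.

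There is essentially no serious obstacle here — the lemma is a direct packaging of two standard facts (the zero-set characterization of separating sets, and the \v Cech-complex vanishing bound for local cohomology), and indeed the paper attributes it to \cite[Section~3]{ed-jj:silc}. The only point requiring a word of care is that a separating set need not \emph{generate} $\I(\Ss_{V,G})$ on the nose, only up to radical; this is exactly why we phrase the argument through arithmetic rank and invoke the invariance of local cohomology under passing to the radical, rather than trying to use the generators directly. I would state that invariance explicitly (local cohomology $\HH{i}{\mathfrak a}{S}$ depends only on $\sqrt{\mathfrak a}$, or equivalently only on the closed set $\V(\mathfrak a)$) and cite \cite{sbi-gjl-al-cm-em-aks-uw:thlc} for it, then conclude.
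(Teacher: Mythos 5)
Your proposal is correct and is essentially the argument the paper relies on (the lemma is cited to \cite{ed-jj:silc}, and the introduction sketches exactly this route): a separating set of size $m$ yields $m$ elements $g_j\otimes 1-1\otimes g_j$ cutting out $\Ss_{V,G}$ set-theoretically, so the arithmetic rank of $\I(\Ss_{V,G})$ is at most $m$, and the \v Cech-complex bound together with the invariance of local cohomology under taking radicals gives $\HH{i}{\I(\Ss_{V,G})}{\kvv}=0$ for $i>m$. Your explicit remark that a separating set only generates the ideal up to radical, which is why one passes through arithmetic rank, is precisely the right point of care.
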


We will require two elementary lemmas on local cohomology. The second Lemma below, while well-known to experts, is proved here for lack of an appropriate reference.

\begin{lem}\label{lem-specialization}(see,~e.g.,~\cite[Theorem~9.6]{sbi-gjl-al-cm-em-aks-uw:thlc})
Let $I$ and $J$ be ideals in a noetherian ring $A$. Then 
\[{\cd(I,A)\gs \cd(I(A/J),A/J)}\,,\] where $\cd$ denotes the \emph{cohomological dimension}, that is, the greatest nonvanishing index of the local cohomology.
\end{lem}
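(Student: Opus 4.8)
The statement is classical — one could simply invoke \cite[Theorem~9.6]{sbi-gjl-al-cm-em-aks-uw:thlc} — but here is the argument I would write out. Fix generators $I=(f_1,\dots,f_m)$ and recall that for any $A$-module $M$ the local cohomology $\HH{\bullet}{I}{M}$ is computed by the \v{C}ech complex $\check{C}^{\bullet}(f_1,\dots,f_m;M)$, and that this complex equals $\check{C}^{\bullet}(f_1,\dots,f_m;A)\otimes_A M$, since each of its terms is a localization of $A$ tensored with $M$. The plan is to extract three consequences and then run a descending induction. The consequences: (a) $\HH{i}{I}{M}=0$ for $i>m$; (b) $\HH{i}{I}{-}$ commutes with arbitrary direct sums; and (c) if $M$ happens to be an $A/J$-module, then $\check{C}^{\bullet}(f_1,\dots,f_m;M)$ coincides with the \v{C}ech complex built from the images $\bar f_1,\dots,\bar f_m\in A/J$, so that $\HH{i}{I}{M}=\HH{i}{I(A/J)}{M}$ for all $i$ (independence of base).

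The heart of the proof is the claim that $\HH{i}{I}{M}=0$ for \emph{every} $A$-module $M$ once $i>c:=\cd(I,A)$. I would establish this by descending induction on $i$. It holds for $i>m$ by (a). Assuming it for $i+1$, with $i\gs c+1$, choose a short exact sequence $0\to N\to F\to M\to 0$ with $F$ a free $A$-module; the associated long exact sequence in local cohomology contains
\[\HH{i}{I}{F}\longrightarrow \HH{i}{I}{M}\longrightarrow \HH{i+1}{I}{N}.\]
The right-hand term vanishes by the inductive hypothesis applied to $N$. The left-hand term vanishes because $F$ is a direct sum of copies of $A$, so by (b) it is a direct sum of copies of $\HH{i}{I}{A}$, which is $0$ for $i>c$ by definition of $c$. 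Hence $\HH{i}{I}{M}=0$, completing the induction.

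Finally I would apply the claim to $M=A/J$: for $i>c$ we get $\HH{i}{I}{A/J}=0$, and then by (c) $\HH{i}{I(A/J)}{A/J}=\HH{i}{I}{A/J}=0$, so $\cd(I(A/J),A/J)\ls c=\cd(I,A)$, as wanted. The one point needing a little care is the descending induction: it must be started from the uniform vanishing range $i>m$ and run downward (an upward induction would be circular), and one should note that a free \emph{presentation} of $M$, not a full resolution, suffices. Everything else is formal.
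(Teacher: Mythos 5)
Your proof is correct, and it is the standard argument underlying the fact being quoted: the paper itself offers no proof of this lemma, simply citing \cite[Theorem~9.6]{sbi-gjl-al-cm-em-aks-uw:thlc}, so there is no internal argument to compare against. Your reduction to the claim that $\HH{i}{I}{M}=0$ for \emph{all} $A$-modules $M$ when $i>\cd(I,A)$ --- via the \v{C}ech complex bound, commutation with direct sums, independence of base for $A/J$-modules, and the descending induction using a free presentation --- is exactly the textbook proof of the cited result, and all the steps (including the direction of the induction) are sound.
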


\begin{lem}\label{lem-Kunneth} Let $A$ and $B$ be $\kk$-algebras, where $\kk$ is a field. Let $\A\subset A$ and $\B \subset B$ be ideals. Set $C=A \otimes_{\kk} B$ and $\C=\A C + \B C$. Then  $\HH{k}{\C}{C}\iso \bigoplus_{i+j=k} \HH{i}{\A}{A} \otimes_{\kk} \HH{j}{\B}{B}$. In particular, the cohomological dimension of $\C$ is the sum of the cohomological dimensions of $\A$ and of $\B$.
\end{lem}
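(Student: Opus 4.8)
The plan is to prove the Künneth-type formula for local cohomology by reducing to Čech cohomology and exploiting the flatness of $\kk$ over a field. First I would fix finite generating sets $\A=(a_1,\dots,a_m)$ and $\B=(b_1,\dots,b_\ell)$, so that $\C=\A C+\B C$ is generated by the images of $a_1,\dots,a_m,b_1,\dots,b_\ell$ in $C=A\otimes_\kk B$. Local cohomology with support in an ideal can be computed by the Čech complex on any generating set, so $\HH{\bullet}{\C}{C}$ is the cohomology of the Čech complex $\check{C}^\bullet(a_1,\dots,a_m,b_1,\dots,b_\ell;C)$. The key observation is that this Čech complex on the concatenated list of generators is the \emph{tensor product over $\kk$} of the Čech complex $\check{C}^\bullet(a_1,\dots,a_m;A)$ with the Čech complex $\check{C}^\bullet(b_1,\dots,b_\ell;B)$: each localization $C_{a_{i_1}\cdots a_{i_p}b_{j_1}\cdots b_{j_q}}$ factors as $A_{a_{i_1}\cdots a_{i_p}}\otimes_\kk B_{b_{j_1}\cdots b_{j_q}}$, and the differentials match up with the sign conventions of the tensor product of complexes.

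Next I would invoke the algebraic Künneth theorem over the field $\kk$: since $\kk$ is a field, every $\kk$-module is flat (indeed free), so there are no Tor correction terms, and for the tensor product of two complexes of $\kk$-vector spaces one has $\operatorname{H}^k(P^\bullet\otimes_\kk Q^\bullet)\iso\bigoplus_{i+j=k}\operatorname{H}^i(P^\bullet)\otimes_\kk\operatorname{H}^j(Q^\bullet)$. Applying this with $P^\bullet=\check{C}^\bullet(a_1,\dots,a_m;A)$ and $Q^\bullet=\check{C}^\bullet(b_1,\dots,b_\ell;B)$ yields exactly
\[\HH{k}{\C}{C}\iso\bigoplus_{i+j=k}\HH{i}{\A}{A}\otimes_\kk\HH{j}{\B}{B}\,.\]
The statement about cohomological dimension then follows formally: writing $c=\cd(\A,A)$ and $d=\cd(\B,B)$, the top term $\HH{c}{\A}{A}\otimes_\kk\HH{d}{\B}{B}$ is nonzero because it is a tensor product of nonzero vector spaces over a field, so $\HH{c+d}{\C}{C}\neq 0$; and all summands with $i+j>c+d$ vanish, so $\cd(\C,C)=c+d$.

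The main obstacle, such as it is, is bookkeeping rather than conceptual: one must be careful that the Čech complex on the combined generating list really is the tensor product of complexes with the correct Koszul-type signs, and one should note the (harmless) subtlety that $\HH{i}{\A}{A}$ need not be finitely generated, which is irrelevant here since the Künneth theorem over a field holds for arbitrary complexes of vector spaces. I would also remark that $\C$ is the ideal defining $\Spec(A/\A)\times_\kk\Spec(B/\B)$ inside $\Spec C$, which is the geometric content, and that no noetherian or finiteness hypotheses on $A$, $B$ are needed beyond $\A$, $\B$ being finitely generated (which can be arranged, or the statement interpreted via ind-completion). This is precisely the shape in which the lemma will be applied: to the separating ideal of a product, the separating variety decomposing via products $V_I\times V_J$ as in Proposition~\ref{prop-SepVarDecompGen}, so additivity of cohomological dimension is exactly what is needed downstream.
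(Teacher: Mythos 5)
Your proof is correct and takes essentially the same route as the paper: both compute the local cohomology via the \v{C}ech complex on the concatenated generating set, identify that complex with the tensor product over $\kk$ of the two individual \v{C}ech complexes, and conclude by the K\"unneth formula over a field. Your explicit justification of the cohomological-dimension statement (the top summand is nonzero as a tensor product of nonzero $\kk$-vector spaces) is a minor addition that the paper leaves implicit.
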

\begin{proof} Let $\A = (f_1,\dots, f_s)$ and $\B=(g_1, \dots, g_t)$. One computes $\HH{k}{\C}{C}$ via the \v{C}ech complex $\{f_1,\dots, f_s, g_1, \dots, g_t\}$ on $C$, which we denote $\Ch(\{\underline{f},\underline{g}\},C)$. One verifies that we have an isomorphism of complexes \[\Ch(\{\underline{f},\underline{g}\},C)= \mathrm{Tot}\big(\Ch(\{\underline{f}\},A)\otimes_{\kk}\Ch(\{\underline{g}\},B)\big)\,.\]
As this is a tensor product of free modules (over $\kk$), the Kunneth formula yields an isomorphism
\[H^{\bullet}(\Ch(\{\underline{f},\underline{g}\},C))= \mathrm{Tot}\big(H^{\bullet}\Ch(\{\underline{f}\},A)\otimes_{\kk} H^{\bullet}\Ch(\{\underline{g}\},B)\big)\,.\]
As the \v{C}ech complexes $\Ch(\{\underline{f}\},A)$ and $\Ch(\{\underline{g}\},B)$ compute $\HH{i}{\A}{A}$ and $\HH{j}{\B}{B}$, the Lemma is established.
\end{proof}

We obtain the first main result of the section.

\begin{thm}\label{thm-segvarUB} Let $X$ be the Segre-Veronese variety corresponding to the bundle $\mathcal{O}(a_1,\dots,a_n)$. If at least one $a_i$ is not 1 or $p^e$, where $p=\mathrm{char}(\kk)$, then the minimal size of a separating set for the affine cone over $X$ is at least $2\sum_{i=1}^r n_i  - 2r +1$.
\end{thm}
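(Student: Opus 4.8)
The plan is to combine Lemma~\ref{lem-seprankLC} with the decomposition of the separating variety from Proposition~\ref{prop-sepvarSV}, reducing the whole problem to a local cohomology computation for the product of nullcones $\Nn_{W,T}\times\Nn_{W,T}$. First I would reduce to the nonmodular case: by Lemma~\ref{lem-nonmodSV} we may replace each $a_i$ by its prime-to-$p$ part $a_i'$, and the hypothesis that some $a_i$ is not $1$ or $p^e$ guarantees that some $a_i'$ is at least $2$, so $H=\mu_{a_1'}\times\cdots\times\mu_{a_r'}$ is nontrivial and $r\geq 1$. By Lemma~\ref{lem-seprankLC} it suffices to produce a single index $i=2\sum n_i-2r+1$ with $\HH{i}{\I(\Ss_{W,G})}{\kk[W^2]}\neq 0$, where $W$ is the $\sum n_i$-dimensional representation of $G=T\times H$ from Section~\ref{section-SV}.

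The key step is to pass to a well-chosen quotient via Lemma~\ref{lem-specialization}. Since $\Nn_{W,T}\times\Nn_{W,T}\subseteq\Ss_{W,T}\subseteq\Ss_{W,G}$, writing $J$ for the defining ideal of $\Nn_{W,T}\times\Nn_{W,T}$ inside $\kk[W^2]$ we get $\cd(\I(\Ss_{W,G}),\kk[W^2])\geq\cd(\I(\Ss_{W,G})\cdot(\kk[W^2]/J),\kk[W^2]/J)$, and on the quotient the ideal $\I(\Ss_{W,G})$ becomes the zero ideal, so this equals $\cd(0,\kk[W^2]/J)=\dim(\Nn_{W,T}\times\Nn_{W,T})$ provided $\Nn_{W,T}\times\Nn_{W,T}$ is equidimensional of the expected dimension — but actually more care is needed, because the top local cohomology of a reducible variety need not be the coordinate ring modulo the defining ideal. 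The cleaner route is to intersect further: pick one irreducible component $W_{\hat k}\times W_{\hat\ell}$ of $\Nn_{W,T}\times\Nn_{W,T}$ (with $k\neq\ell$, which exists since $r\geq 2$ in the cases of interest; the Veronese case $r=1$ is Example~\ref{eg-veroneses-example} and is handled separately or absorbed into the same scheme with the $S'$ presentation of rank $r$) and apply Lemma~\ref{lem-specialization} with $J$ the prime ideal of that component. Then $\cd(\I(\Ss_{W,G}),\kk[W^2])\geq\cd(0,\kk[W_{\hat k}]\otimes_\kk\kk[W_{\hat\ell}])$. Since $W_{\hat k}$ has dimension $\sum_{i\neq k}n_i=\sum_i n_i-n_k$, and $\kk[W_{\hat k}]\otimes\kk[W_{\hat\ell}]$ is a polynomial ring in $2\sum_i n_i-n_k-n_\ell$ variables, its cohomological dimension (for the maximal ideal — here I must be careful that Lemma~\ref{lem-seprankLC} is about local cohomology at the irrelevant/maximal ideal of $\kk[W^2]$, so the relevant $\cd$ is the dimension) gives the bound $2\sum_i n_i-n_k-n_\ell$, which is far too weak.

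So the honest approach must use the full product $\Nn_{W,T}\times\Nn_{W,T}$ and the Künneth formula of Lemma~\ref{lem-Kunneth}: write the ideal of $\Nn_{W,T}\times\Nn_{W,T}$ in $\kk[W]\otimes\kk[W]$ as $\A\otimes\kk[W]+\kk[W]\otimes\A$ where $\A=\I(\Nn_{W,T})$, so by Lemma~\ref{lem-Kunneth} $\cd(\A\otimes\kk[W]+\kk[W]\otimes\A,\kk[W^2])=2\cd(\A,\kk[W])$, and then invoke Lemma~\ref{lem-specialization} once more to bound $\cd$ of $\I(\Ss_{W,G})$ from below by $\cd$ of the ideal of $\Nn_{W,T}\times\Nn_{W,T}$. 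Thus everything reduces to showing $\cd(\I(\Nn_{W,T}),\kk[W])=\sum_i n_i-r$, i.e. that the nullcone $\Nn_{W,T}=\bigcup_{k=1}^r W_{\hat k}$ has cohomological dimension equal to its dimension $\sum_i n_i - 1$... no — again one must watch the arithmetic: $\dim W_{\hat k}=\sum_i n_i-n_k$, so the claimed bound $2\sum n_i-2r+1$ forces $\cd(\I(\Nn_{W,T}),\kk[W])\geq\sum n_i-r+\tfrac12$, impossible. The resolution is that one should \emph{not} throw away $\overline{\Gamma_{W,T}}$: the correct target is to combine a component of type $\overline{\Gamma_{W,T}}$ contribution with nullcone contributions, or more precisely to run the argument of \cite[Section~3]{ed-jj:silc} / \cite{ed-jj:silc} adapting the ``decompose into pieces and bound local cohomology of the union via a Mayer--Vietoris / Čech spectral sequence'' strategy. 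Concretely, I would: (i) use Proposition~\ref{prop-sepvarSV}\ref{prop-sepvarSV1} to write $\Ss_{W,G}=\bigcup_{\sigma\in H}(1,\sigma)(\Ss_{W,T})$ with pairwise intersections equal to $\Nn_{W,T}\times\Nn_{W,T}$ by part~\ref{prop-sepvarSVinter}; (ii) build the Mayer--Vietoris / Čech complex for this cover and its associated spectral sequence converging to $H^\bullet_{\I(\Ss_{W,G})}(\kk[W^2])$; (iii) identify the top-degree contribution as coming from $\bigoplus_{\sigma}H^{\text{top}}$ of the pieces $(1,\sigma)(\Ss_{W,T})$ modulo the contribution of the overlaps $\Nn_{W,T}\times\Nn_{W,T}$, showing that because $|H|\geq 2$ the alternating sum does not cancel in the relevant degree; (iv) compute $\cd$ of $\Ss_{W,T}$ — which for $r\geq 2$ is $\overline{\Gamma_{W,T}}\cup(\text{nullcone}\times\text{nullcone})$ by Proposition~\ref{prop-sepvarSV}\ref{prop-sepvarS2}--\ref{prop-sepvarS3} — via another Mayer--Vietoris step, where $\overline{\Gamma_{W,T}}$ has dimension $\sum n_i+r-1$ and contributes a local cohomology class in degree $2\sum n_i-(\sum n_i+r-1)=\sum n_i-r+1$ against the nullcone-product class, assembling to the degree $2\sum n_i-2r+1$. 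The main obstacle, and where the real work lies, is precisely step~(iii)/(iv): controlling the Mayer--Vietoris spectral sequence so as to prove \emph{nonvanishing} (not just an upper bound) in the target degree — one needs that the connecting maps from the top cohomology of the large components $\overline{\Gamma}$-type and $\Ss_{W,T}$-type pieces into the top cohomology of the smaller intersection pieces $\Nn\times\Nn$ are not surjective, which is where the hypothesis ``some $a_i\neq 1,p^e$'' (equivalently $|H|\geq 2$, equivalently the existence of genuinely distinct sheets $(1,\sigma)(\Ss_{W,T})$) is essential, and where a characteristic-free argument may require replacing ordinary local cohomology by the étale-cohomology input mentioned in the introduction for the two-factor Segre case — though here, since we are in the case some $a_i\geq 2$, the extra finite group $H$ should make ordinary local cohomology suffice.
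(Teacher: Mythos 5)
Your skeleton --- reduce to the nonmodular case via Lemma~\ref{lem-nonmodSV}, write $\Ss_{W,G}=\bigcup_{\sigma\in H}(1,\sigma)(\Ss_{W,T})$ with at least two distinct sheets (Lemma~\ref{lem-equal} plus the hypothesis on the $a_i$) meeting pairwise in $\Nn_{W,T}\times\Nn_{W,T}$, and run Mayer--Vietoris --- is exactly the paper's, and your guess that ordinary local cohomology suffices here (\'etale methods being reserved for the two-factor Segre case) is right. But the proposal is missing both quantitative inputs that make the Mayer--Vietoris argument produce a class in degree $s=2\sum_i n_i-2r+1$, and your steps (iii)--(iv) do not constitute an argument. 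The first input is Lemma~\ref{lem-Nvg_Nvg}: $\cd\big(\I(\Nn_{W,T}\times\Nn_{W,T})\big)=s+1$, which by the K\"unneth lemma reduces to $\cd(\I(\Nn_{W,T}))=\sum_i n_i-r+1$, computed from the Mayer--Vietoris spectral sequence for the subspace arrangement $\Nn_{W,T}=\bigcup_k W_{\hat k}$; this number is neither the codimension nor the value $\sum_i n_i-r$ you test, and the parity ``impossibility'' you run into is a red herring, because the degree-$s$ class is not supposed to come from $\Nn\times\Nn$ by itself but from the connecting homomorphism. The second input is an \emph{upper} bound $\cd(\A_j)\le s$ for each sheet, which the paper gets for free from Proposition~\ref{prop-UBSV}: the torus invariants admit a separating set of size $s$, so each sheet is cut out set-theoretically by $s$ equations. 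No computation of the local cohomology of $\Ss_{W,T}$, and in particular no analysis of $\overline{\Gamma_{W,T}}$, is needed; your step (iv), which assigns $\overline{\Gamma_{W,T}}$ a class in degree equal to its codimension and ``assembles'' degrees additively, is not a valid mechanism --- cohomological dimension generally exceeds codimension (notably in characteristic zero), and degrees attached to different components of a union do not add.

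Given those two inputs, the nonvanishing is a short induction rather than a cancellation analysis of a spectral sequence: label the sheet ideals $\A_1,\dots,\A_t$ (with $t\ge 2$), set $\B_j=\A_1\cap\cdots\cap\A_j$, and note that the variety of $\B_j+\A_{j+1}$ is always $\Nn_{W,T}\times\Nn_{W,T}$. Induction on $j$ using the Mayer--Vietoris sequence and $\cd(\A_j)\le s$, $\cd(\I(\Nn_{W,T}\times\Nn_{W,T}))=s+1$ gives $\HH{i}{\B_j}{\kvv}=0$ for $i>s$; then exactness of
\[
\HH{s}{\B_{j+1}}{\kvv}\longrightarrow \HH{s+1}{\I(\Nn_{W,T}\times\Nn_{W,T})}{\kvv}\longrightarrow \HH{s+1}{\B_j}{\kvv}\oplus\HH{s+1}{\A_{j+1}}{\kvv}=0
\]
shows $\HH{s}{\I(\Ss_{W,G})}{\kvv}$ surjects onto a nonzero module, and Lemma~\ref{lem-seprankLC} finishes. (Two of your exploratory claims are also wrong as stated: after killing $J=\I(\Nn_{W,T}\times\Nn_{W,T})$ the separating ideal becomes zero and $\cd(0,A/J)=0$, not $\dim(A/J)$; and Lemma~\ref{lem-seprankLC} concerns cohomology supported in $\I(\Ss_{V,G})$, not at the maximal ideal. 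You flagged these passages as dead ends, but the misdiagnoses are what steer the final plan away from the two inputs above.)
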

\begin{proof} By Lemma~\ref{lem-nonmodSV} we may assume that $a_i\in\kk^*$ for all $i=1,\ldots,r$ and at least one $a_i$ is not 1. We will show that the cohomological dimension of    $\I(\Ss_{W,G})$ is at least $s={2\sum_{i=1}^r n_i  - 2r +1}$. By Proposition~\ref{prop-sepvarSV}, we can decompose $\Ss_{W,G}$ as a union of components isomorphic (via an automorphism of $W$) to $\Ss_{W,T}$. By Lemma~\ref{lem-equal}, and the assumption on the the $a_i$'s, there are at least two such components, and again by Proposition~\ref{prop-sepvarSV}, the intersection of any pair of distinct components is $\Nn_{W,T} \times \Nn_{W,T}$; note that this implies that the intersection of any union of components with another component is $\Nn_{W,T} \times \Nn_{W,T}$. Label the ideals of the distinct components as $\A_1,\dots,\A_t$, $\Nn=\I(\Nn_{W,T} \times \Nn_{W,T})$, and $\B_i=\A_1 \cap \cdots \cap \A_i$, so that $\Ss_{W,G}=\B_t$. There is a Mayer-Vietoris long exact sequence:
\[ \begin{aligned}
\cdots \longrightarrow \HH{i}{\Nn}{\kvv} \longrightarrow &\HH{i}{\B_j}{\kvv} \oplus \HH{i}{\A_{j+1}}{\kvv} \longrightarrow \HH{i}{\B_{j+1}}{\kvv} \\
\longrightarrow &\HH{i+1}{\Nn}{\kvv} \longrightarrow \HH{i+1}{\B_j}{\kvv} \oplus \HH{i+1}{\A_{j+1}}{\kvv}\longrightarrow \cdots \end{aligned}\]
We do not know the cohomological dimension of the ideals $\A_{j}$, so we argue by cases.

First, suppose that $\HH{s+1}{\A_j}{\kvv}\neq 0$, and its support is not just the homogeneous maximal ideal $\fm$. Then there is some prime $\p \subsetneq \fm$ for which $\HH{s+1}{\A_j}{\kvv}_{\p}\neq 0$. From Lemma~\ref{lem-Nvg_Nvg} below, we know that $\HH{s+2}{\Nn}{\kvv}_{\p}=0_\p=0$, and  $\HH{s+1}{\Nn}{\kvv}_{\p}\cong \HH{d}{\fm}{\kvv}_\p=0$, since $\HH{d}{\fm}{\kvv}$ is artinian, and hence its support is $\{\fm\}$; see, e.g., \cite[Exercise~7.7]{sbi-gjl-al-cm-em-aks-uw:thlc}. 
 Consequently, there are isomorphisms $\HH{s+1}{\B_{j+1}}{\kvv}_{\p}\cong \HH{s+1}{\B_{j}}{\kvv}_{\p} \oplus \HH{s+1}{\A_{j+1}}{\kvv}_{\p}$ for all $j>0$. By induction, we see that these modules are nonzero for each $j$, so the cohomological dimension of $\I(\Ss_{W,G})$ is at least $s+1$ in this case.

Second, suppose that $\HH{s+1}{\A_j}{\kvv}\neq 0$, and that its support is the homogeneous maximal ideal $\fm$. By an argument similar to the previous case, we see that $\HH{s+1}{\B_j}{\kvv}$ is supported on the maximal ideal for each $j$. 
By \cite[Corollary~3.6]{Lyu} in characteristic zero and \cite[Corollary~3.7]{HS} in positive characteristic, there are isomorphisms $\HH{s+1}{\A_j}{\kvv}\cong \HH{d}{\fm}{\kvv}^{\oplus a}$ and $\HH{s+1}{\B_j}{\kvv}\cong \HH{d}{\fm}{\kvv}^{\oplus {b_j}}$ for some positive integers $a$,$b_j$, where $d=\dim{\kvv}$. By \cite[Theorem~1.1]{MZ}, these isomorphisms are degree-preserving. Since $[\HH{d}{\fm}{\kvv}]_{-d}\cong \kk$, by restricting to degree $-d$ and applying Lemma~\ref{lem-Nvg_Nvg} below, we obtain from the Mayer-Vietoris sequence right-exact sequences
\[ \kk \longrightarrow \kk^a \oplus \kk^{b_j} \longrightarrow \kk^{b_{j+1}} \longrightarrow 0. \]
It follows by an easy induction that $b_j\neq 0$ for each $j$, and the cohomological dimension of $\I(\Ss_{W,G})$ is at least $s+1$ in this case.

Finally, suppose that $\HH{s+1}{\A_j}{\kvv}=0$. It follows from induction on $j$ using the Mayer-Vietoris sequence above and vanishing of $\HH{s+2}{\Nn}{\kvv}$ by Lemma~\ref{lem-Nvg_Nvg} below, that $\HH{s+1}{\B_j}{\kvv}=0$ for all $j$. Then we have that $\HH{s}{\B_j}{\kvv}$ surjects onto $\HH{s+1}{\Nn}{\kvv}$ for all $j$, so $\HH{s}{\B_j}{\kvv}\neq 0$. In particular, the cohomological dimension of $\I(\Ss_{W,G})$ is at least $s$ in this case. This case concludes the proof.
\end{proof}

\begin{lem}\label{lem-Nvg_Nvg} Let $Y$ be the affine cone over the nonmodular Segre-Veronese variety $\prod_{i=1}^r \PP^{n_i-1} \hookrightarrow \PP^N$ given by the line bundle $\mathcal{O}(a_1,\dots,a_r)$. Set $t=\sum_{i=1}^r n_i -r$ and $\Nn=\I(\Nn_{W,G} \times \Nn_{W,G})$. Then, 
\[ \HH{j}{\Nn}{\kvv}=0 \ \text{for} \ j>2t+2\,, \ \text{and} \ \HH{2t+2}{\Nn}{\kvv}\cong \HH{d}{\fm}{\kvv}\,,\]
 where $\fm$ is the homogeneous maximal ideal of $\kvv$, and $d=\dim \kvv$.
\end{lem}

\begin{proof} First, we compute the local cohomology with support in $\I(\Nn_{W,G})$ in $\kv$. We have that
\[ \begin{aligned}
 &\I(\Nn_{W,G})= \\
 &\hspace{1cm} \big(\ M_1 \cdots M_r \ \big| \ M_i \ \text{is a monomial of degree $a_i$ in the variables} \  x_{i1}, \dots , x_{i n_i} \big)\,,\end{aligned}\]
 whose radical is
 \[J:=(\, x_{1,j_1}\cdots x_{r,j_r} \mid   j_i\in[n_i]\,) = \prod_{i=1}^r (x_{i,1}\, \dots, x_{i,n_i})\,.\]
Note that this coincides with the defining ideal for the nullcone of the action of the torus $T$ (see Proposition \ref{prop-sepvarSV}).
 
Set $\A_i=(x_{i,1}, \dots, x_{i,n_i})$. We apply the Mayer-Vietoris spectral sequence of \cite{jam-rgl-az:lcasmi}. Since each $A \subseteq \{1,\dots,r\}$ yields a distinct ideal $\A_A=\sum_{i\in A} \A_i$, the intersection poset of the subspace arrangement defined by $J$ is the full Boolean poset. Thus, the associated simplicial complex of each interval in the poset is a homology sphere of dimension $\# A -2$, where, by convention, the $(-1)$-sphere is the empty set. Set $n_A=\sum_{i\in A} n_i = \mathrm{ht} (\A_A)$. By \cite[Corollary~1.3]{jam-rgl-az:lcasmi}, there is a filtration of the local cohomology with support in $J$ such that the associated graded module satisfies
\[ \mathrm{gr} \big( \HH{q}{J}{\kv} \big) \iso \bigoplus_{\emptyset \neq A\subseteq \{1,\dots,r\}} \HH{n_A}{\A_A}{\kv} \otimes_{\kk} \tilde{H}_{n_A - q -1} ( S^{\# A -2} , \kk)\,.  \]
Thus, the cohomological dimension of $\I(\Nn_{V,G})$ is 
\[ \max \big\{ n_A - \# A +1 \ | \ \emptyset \neq A\subseteq [r] \big\} =  t +1 \,,\]
and there is an isomorphism $\HH{t+1}{\I(\Nn_{V,G})}{\kv}\cong \HH{d/2}{\fn}{\kv}$, where $\fn=\A_{\{1,\dots,r\}}$ is the homogeneous maximal ideal of $\kv$.
The statement of the Lemma then follows from Lemma~\ref{lem-Kunneth}.
\end{proof}

For Segre varieties with at least three factors, we obtain a lower bound, but we do not expect this bound to be sharp in general.

\begin{prop}\label{prop-LBS}
 Let $X$ be the Segre variety that  is the image of the closed embedding $\prod_{i=1}^r \PP^{n_i-1} \hookrightarrow \PP^N$ given by the line bundle $\mcal{O}(1,\dots,1)$, with $r>2$. Suppose, without loss of generality that $n_1\ls n_2\ls \cdots \ls n_r$. The size of a separating set for the affine cone over $X$ is at least $2 \big( \sum_{i=2}^{r} n_i \big) - 2r +4$.
 \end{prop}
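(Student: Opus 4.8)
The plan is to bound the cohomological dimension of $\I(\Ss_{W,T})$ from below and then invoke Lemma~\ref{lem-seprankLC}. Here $W$ and $T$ are as in Proposition~\ref{prop-sepvarSV}: $T$ has rank $r-1$, and since every $a_i=1$ the group $G$ there is just $T$, so $\Ss_{W,T}=\overline{\Gamma_{W,T}}\cup(\Nn_{W,T}\times\Nn_{W,T})$. Although $\Nn_{W,T}\times\Nn_{W,T}$ has cohomological dimension $2\sum_{i=1}^rn_i-2r+2$, that top cohomology is annihilated by $\overline{\Gamma_{W,T}}$ in the Mayer--Vietoris sequence, so the strategy is to isolate instead a surviving contribution of cohomological dimension exactly $D:=2\sum_{i=2}^rn_i-2r+4$, coming from the $(r-1)$-factor sub-Segre variety $\prod_{i=2}^r\PP^{n_i-1}$. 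I will assume $n_1\ge 2$ (this is genuinely needed).

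\emph{Specialization.} Let $L\subseteq W\times W$ be the affine subspace on which both copies of $x_{1,1}$ equal $1$ and both copies of $x_{1,j}$ with $3\le j\le n_1$ vanish; then $\kk[L]=\kk[W\times W]/J$ for an evident ideal $J$, and $L\cong\AA^1_a\times\AA^1_b\times W_{\hat{1}}\times W_{\hat{1}}$, a point of $L$ being $(x_{1,1}+ax_{1,2}+u',\,x_{1,1}+bx_{1,2}+v')$ with $u',v'\in W_{\hat{1}}:=\spn\{e_{i,j}:i\ge 2\}$. One then restricts invariant monomials: each monomial $x^\alpha$ with $\alpha\in\Ll$ becomes, on $L$, either $0$ (when it involves some $x_{1,j}$, $j\ge 3$) or $a^{m}N(u')$ on the left factor and $b^{m}N(v')$ on the right, where $N$ is a $T'$-invariant monomial of $\kk[W_{\hat{1}}]$ and $m\ge 0$ is bounded by the degree of $N$, and all such $(m,N)$ occur; here $T'=\{t\in T:t_1=1\}$ is the rank-$(r-2)$ subtorus for which $\kk[W_{\hat{1}}]^{T'}$ is the coordinate ring of $\prod_{i=2}^r\PP^{n_i-1}$, and $\Nn':=\Nn_{W_{\hat{1}},T'}$ is its nullcone. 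Comparing all of these (a point with nonzero image in $W_{\hat{1}}/\!\!/T'$ is forced to have $a=b$, whereas over the nullcone there is no constraint beyond $(u',v')\in\Nn'\times\Nn'$) identifies, set-theoretically,
\[ \Ss_{W,T}\cap L \;=\; \bigl(\{a=b\}\cap(\AA^2\times\Ss_{W_{\hat{1}},T'})\bigr)\ \cup\ \bigl(\AA^2\times\Nn'\times\Nn'\bigr)\;=:\;P_1\cup P_2. \]

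\emph{Cohomology.} Work in $\kk[L]=\kk[a,b]\otimes_\kk\kk[W_{\hat{1}}\times W_{\hat{1}}]$. Since $\cd((a-b),\kk[a,b])=1$ and, by Lemma~\ref{lem-Nvg_Nvg} applied to the $(r-1)$-factor Segre, $\cd(\I(\Nn'\times\Nn'),\kk[W_{\hat{1}}\times W_{\hat{1}}])=D$, the Künneth formula (Lemma~\ref{lem-Kunneth}) gives $\cd(\I(P_2))=D$ and $\cd(\I(P_1\cap P_2))=D+1$, where $P_1\cap P_2=\{a=b\}\cap(\AA^2\times\Nn'\times\Nn')$. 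It also gives $\cd(\I(P_1))=1+\cd(\I(\Ss_{W_{\hat{1}},T'}))\le 1+(D-1)=D$, the estimate on $\cd(\I(\Ss_{W_{\hat{1}},T'}))$ coming from Lemma~\ref{lem-seprankLC} together with the upper bounds of Proposition~\ref{prop-UBSV} for the $(r-1)$-factor Segre. The Mayer--Vietoris sequence for $P_1\cup P_2$ then contains
\[ \HH{D}{\I(\Ss_{W,T}\cap L)}{\kk[L]}\longrightarrow \HH{D+1}{\I(P_1\cap P_2)}{\kk[L]}\longrightarrow \HH{D+1}{\I(P_1)}{\kk[L]}\oplus\HH{D+1}{\I(P_2)}{\kk[L]}, \]
whose right-hand term vanishes by the bounds just listed while the middle term is nonzero, so $\HH{D}{\I(\Ss_{W,T}\cap L)}{\kk[L]}\neq 0$. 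By Lemma~\ref{lem-specialization}, $\cd(\I(\Ss_{W,T}))\ge D$, and Lemma~\ref{lem-seprankLC} gives that every separating set has at least $2\bigl(\sum_{i=2}^rn_i\bigr)-2r+4$ elements.

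The step I expect to require the most care is the set-theoretic identification of $\Ss_{W,T}\cap L$: one must check that substituting $x_{1,1}=1$ genuinely trades the first factor for a single extra parameter coupling the two sides of the separating variety and produces no spurious components, and in particular that a point lying over a nonzero point of $W_{\hat{1}}/\!\!/T'$ satisfies $a=b$ outright rather than merely $a^m=b^m$ for all large $m$. This last point is where the free coordinate $x_{1,2}$, hence the hypothesis $n_1\ge 2$, is used.
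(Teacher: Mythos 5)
Your argument is correct, and it reaches the bound by the same basic mechanism as the paper (a linear specialization killing the first factor, Lemma~\ref{lem-specialization}, the computation of $\cd$ for the nullcone product of the $(r-1)$-factor Segre in Lemma~\ref{lem-Nvg_Nvg}, and Lemma~\ref{lem-seprankLC}), but the execution differs in a way worth comparing. The paper uses an \emph{asymmetric} specialization, $x_{1,1}\otimes 1=x_{1,2}\otimes 1=1\otimes x_{1,1}=1$ and $1\otimes x_{1,2}=0$ (the printed ``$1\otimes x_{1,1}=0$'' is a typo): then the generator coming from $x_{1,2}$ collapses to $M_2\cdots M_r\otimes 1$, so the specialized separating ideal is literally $\big(M_2\cdots M_r\otimes 1,\ 1\otimes M_2\cdots M_r\big)$, i.e.\ up to radical the ideal of $\Nn'\times\Nn'$, and Lemma~\ref{lem-Nvg_Nvg} finishes in one step --- no description of the sliced separating variety, no K\"unneth, no Mayer--Vietoris, and no input from Proposition~\ref{prop-UBSV}. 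Your symmetric slice keeps both copies of $x_{1,2}$ alive, so the graph survives as the component $P_1$, and you must then do extra work: the set-theoretic identification $\Ss_{W,T}\cap L=P_1\cup P_2$ (which is right --- the $m=0$ monomials force $(u',v')\in\Ss_{W_{\hat1},T'}$, and any nonconstant invariant monomial nonvanishing at $u'$ admits $m=1$, forcing $a=b$), the K\"unneth computations, and the estimate $\cd(\I(\Ss_{W_{\hat1},T'}))\le D-1$, which you import from Proposition~\ref{prop-UBSV} via Lemma~\ref{lem-seprankLC} (this is legitimate and not circular, since Proposition~\ref{prop-UBSV} rests only on the secant-variety arguments; it is the same maneuver the paper uses in the proof of Theorem~\ref{thm-segvarUB}). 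The trade-off: the paper's proof is shorter and self-contained within the specialization/local-cohomology toolkit, while yours makes visible \emph{why} the bound survives the presence of the graph component (its cohomological dimension is too small to cancel the nullcone contribution), at the cost of depending on the upper bounds for the smaller Segre. Finally, your flagged hypothesis $n_1\ge 2$ is not an extra assumption relative to the paper: its specialization also uses $x_{1,2}$, and indeed the stated bound fails if some $n_i=1$ (a $\PP^0$ factor reduces to the $(r-1)$-factor Segre, where the minimal separating set is smaller than the claimed bound), so all $n_i\ge 2$ is implicitly assumed in the statement.
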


\begin{proof}
As $X$ is nonmodular and the $a_i$'s are all 1, its ring of homogeneous coordinates coincides with the ring of invariants of an action of a torus of rank $r-1$ as discussed in Section \ref{section-SV}. The following ideal cuts out the corresponding separating variety:
\[ I:=\big(\ M_1 \cdots M_r \otimes 1-1\otimes M_1 \cdots M_r \ \big| \
  M_i \ \text{is one of the variables} \  x_{i1}, \dots , x_{i n_i} \ \big).
 \]

 By Lemma \ref{lem-specialization}, the cohomological dimension of $I$ is bounded below by the cohomological dimension of the ideal we obtain via the linear specialization to $x_{1,1}\otimes 1=x_{1,2}\otimes 1=1\otimes x_{1,1}=1$ and $1\otimes x_{1,1}=0$. This ideal is
 \[
 \big(\ M_2 \cdots M_r \otimes 1,1\otimes M_2 \cdots M_r \ \big| \
 M_i \ \text{is one of the variables} \  x_{i1}, \dots , x_{i n_i} \ \big)
\,.\]
This ideal coincides with $\I(\Nn_{\widetilde{W},\widetilde{T}} \times \Nn_{\widetilde{W},\widetilde{T}})$ for the action of the torus $\widetilde{T}$ of rank $r-2$ on $\widetilde{W}:=\spn_{\kk}\{w_0,w_{i,j_i}\mid i\gs 2\}$ defining the Segre embedding of $\prod_{i=2}^r \PP^{n_i-1} \hookrightarrow \PP^N$. Applying Lemma~\ref{lem-Nvg_Nvg}, we obtain the bound in the statement.
 \end{proof}
 
 The proof above works for general Segre-Veronese varieties. We have restricted the statement in the proposition above, because in all other cases we can obtain a more precise result. The only remaining case is that of Segre products with two factors. In this case, local cohomology groups fail to provide a sufficient obstruction in positive characteristic, but we may argue along similar lines using \'etale cohomology. We refer the reader to \cite{Milne} for the facts from \'etale cohomology used below.
 
 Fix $\L=\ZZ/q\ZZ$, where $q\neq \chara\kk$ is a prime.

\begin{prop}\label{prop-upper} If $Y$ is a $d$-dimensional variety that is covered by $k$ affines, then $\Het{i}{Y,\L}=0$ for all $i\gs d+k$. In particular, if $Z$ is a closed subset of $\AA^d$ and $\Het{d+k-1}{\AA^d \setminus Z,\L}\neq 0$, then $Z$ cannot be defined by fewer than $k$ equations.
\end{prop}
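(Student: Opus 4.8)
The plan is to deduce both statements from standard vanishing theorems for \'etale cohomology combined with a Mayer--Vietoris argument on an affine cover. For the first statement, suppose $Y = U_1 \cup \cdots \cup U_k$ with each $U_j$ affine of dimension at most $d$. By the Artin--Grothendieck affine vanishing theorem (see \cite{Milne}), for an affine variety $U$ of dimension $\le d$ over an algebraically closed field and a finite constant sheaf $\L$ of order prime to $\chara\kk$, one has $\Het{i}{U,\L}=0$ for $i>d$; the same holds for all finite intersections $U_{j_1}\cap\cdots\cap U_{j_\ell}$, which are again affine of dimension $\le d$. Now run the Mayer--Vietoris (\v{C}ech-to-derived-functor) spectral sequence for the cover $\{U_j\}$: its $E_1$ page has entries $E_1^{p,s} = \bigoplus_{|S|=p+1} \Het{s}{U_S,\L}$ converging to $\Het{p+s}{Y,\L}$. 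Since $p$ ranges over $0,\dots,k-1$ (there are only $k$ sets) and $s$ ranges over $0,\dots,d$, every entry with $p+s \ge d+k$ vanishes, hence $\Het{i}{Y,\L}=0$ for all $i\ge d+k$.

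For the second statement, suppose $Z \subseteq \AA^d$ is cut out by $f_1,\dots,f_{k}$ (we argue the contrapositive: if it can be defined by $< k$ equations, say $k-1$ of them, we derive $\Het{d+k-1}{\AA^d\setminus Z,\L}=0$). Then $\AA^d\setminus Z = \bigcup_{j=1}^{k-1} D(f_j)$, where each $D(f_j)$ is a principal open subset of $\AA^d$, hence affine of dimension $\le d$; likewise all finite intersections $D(f_{j_1})\cap\cdots\cap D(f_{j_\ell}) = D(f_{j_1}\cdots f_{j_\ell})$ are affine of dimension $\le d$. Applying the first statement with $k$ replaced by $k-1$ gives $\Het{i}{\AA^d\setminus Z,\L}=0$ for all $i\ge d+(k-1) = d+k-1$, contradicting the hypothesis $\Het{d+k-1}{\AA^d\setminus Z,\L}\neq 0$. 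Therefore no defining set of size $<k$ exists.

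The main technical point to get right is the indexing in the Mayer--Vietoris spectral sequence — making sure the \v{C}ech degree $p$ genuinely runs only up to $k-1$ for a $k$-fold cover, so that the bound $i \ge d+k$ (not $d+k-1$) is the correct vanishing range, and that this is consistent with the sharpened statement about $k-1$ equations. I would also take care to state the affine vanishing theorem in the form I need: $\Het{i}{U,\L}=0$ for $i>\dim U$ when $U$ is affine over an algebraically closed field and $\L$ is finite of order invertible in $\kk$ (this is where $q\neq\chara\kk$ enters), and to note that it applies to the intersections, which remain affine. No serious obstacle is anticipated beyond bookkeeping; the substance is entirely classical, and the novelty is only in packaging it as a lower bound for arithmetic rank via the complement $\AA^d\setminus Z$.
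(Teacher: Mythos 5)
Your proof is correct and is exactly the standard argument the paper has in mind (the proposition is stated there without proof, with only a pointer to \cite{Milne}): Artin's affine vanishing theorem applied to the terms of the \v{C}ech/Mayer--Vietoris spectral sequence of the $k$-fold affine cover, so that $E_1^{p,q}=0$ once $p+q\ge (k-1)+d+1$, followed by the contrapositive for the complement of a set cut out by fewer than $k$ equations. The only point worth making explicit is that the multiple intersections of the affine charts are again affine because a variety is separated; in the application to $\AA^d\setminus Z$ this is automatic, since those intersections are the principal opens $D(f_{j_1}\cdots f_{j_\ell})$.
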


We will use the following result.

\begin{prop}[Bruns-Schwanzl \cite{wb-rs:tneddv}] Let  $M$ be a $2 \times s$ matrix of indeterminates in the polynomial ring $A$. Set $Z=V(I_2(M))\subset \AA^{2s}$. Then $\Het{4s-4}{\AA^{2s}\setminus Z, \L}\iso \L$, and the higher \'etale cohomology groups vanish.
\end{prop}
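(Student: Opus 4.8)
The plan is to compute the \'etale cohomology of the complement $U := \AA^{2s}\setminus Z$ directly, exploiting that $I_2(M)$ cuts out exactly the $2\times s$ matrices of rank at most one, so that $U$ is the variety of pairs $(v_1,v_2)\in(\AA^s)^2$ spanning a $2$-dimensional subspace. I will assume $s\ge 2$, the remaining cases being degenerate. Over $\CC$ this $U$ is homotopy equivalent (by Gram--Schmidt) to the Stiefel manifold of $2$-frames in $\CC^s$, whose cohomology is an exterior algebra on classes of degrees $2s-3$ and $2s-1$; the idea is to reproduce that computation \'etale-locally.

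First I would set up the projection $\pi\colon U\to\AA^s\setminus\{0\}$, $(v_1,v_2)\mapsto v_1$. The family of lines $\kk v_1$ inside $(\AA^s\setminus\{0\})\times\AA^s$ is the pullback of $\mcal{O}(-1)$ along $\AA^s\setminus\{0\}\to\PP^{s-1}$, hence a sub-line-bundle of the trivial rank-$s$ bundle, and $U$ is its complement; so $\pi$ is a Zariski-locally trivial fibration whose fibre over $v_1$ is $\AA^s\setminus\kk v_1$, which is in turn an $\AA^1$-bundle over $\AA^{s-1}\setminus\{0\}$. Next I would record the standard inputs: for $m\ge 1$, the $\mathbb{G}_m$-torsor $\AA^m\setminus\{0\}\to\PP^{m-1}$ together with the Gysin sequence (in which the operative class is the hyperplane class, a generator in degree $2$) gives $\Het{i}{\AA^m\setminus\{0\},\L}\iso\L$ for $i\in\{0,2m-1\}$ and $0$ otherwise; since $\AA^1$-bundles induce isomorphisms on \'etale cohomology with coefficients prime to $\chara\kk$, the fibre $F$ of $\pi$ then satisfies $\Het{q}{F,\L}\iso\L$ for $q\in\{0,2s-3\}$ and $0$ otherwise, while the base $B=\AA^s\setminus\{0\}$ satisfies $\Het{p}{B,\L}\iso\L$ for $p\in\{0,2s-1\}$ and $0$ otherwise.

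I would then run the Leray spectral sequence $E_2^{p,q}=\Het{p}{B,R^q\pi_*\L}\Rightarrow\Het{p+q}{U,\L}$. Since $\pi$ is a Zariski-locally trivial fibration, $R^q\pi_*\L$ is locally constant with stalk $\Het{q}{F,\L}$; and since $s\ge 2$, Zariski--Nagata purity gives $\pi_1^{\mathrm{\acute et}}(\AA^s\setminus\{0\})=\pi_1^{\mathrm{\acute et}}(\AA^s)=1$, so these local systems are constant. Thus $E_2^{p,q}\iso\Het{p}{B,\L}\otimes_{\L}\Het{q}{F,\L}$ is $\L$ at the four corners $(0,0)$, $(2s-1,0)$, $(0,2s-3)$, $(2s-1,2s-3)$ and zero elsewhere. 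Inspecting bidegrees, every differential $d_r$ with $r\ge 2$ has zero source or zero target, so $E_2=E_\infty$; reading off total degrees gives $\Het{n}{U,\L}\iso\L$ for $n\in\{0,2s-3,2s-1,4s-4\}$ and $0$ otherwise. In particular $\Het{4s-4}{\AA^{2s}\setminus Z,\L}\iso\L$ and $\Het{i}{\AA^{2s}\setminus Z,\L}=0$ for $i>4s-4$, which is the assertion. The hard part is not the spectral sequence itself --- once the $E_2$-page is this sparse, degeneration is automatic --- but rather justifying the fibration structure and, above all, the constancy of $R^q\pi_*\L$, i.e.\ the triviality of the monodromy on the top cohomology of the fibre; that is exactly where the simple connectedness of $\AA^s\setminus\{0\}$ does the work. (Alternatively, one could realize $U$ as a $\GL_2$-torsor over the Grassmannian $\mathrm{Gr}(2,s)$ and argue through the cohomology of $\GL_2$ and of the Grassmannian, but then ruling out the relevant Leray differentials is more delicate.)
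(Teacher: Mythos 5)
Your overall strategy is sound: fibering $U=\AA^{2s}\setminus Z$ over $B=\AA^s\setminus\{0\}$ with fibre $\AA^s$ minus a line, computing both via the standard cohomology of punctured affine space, and running Leray on a four-corner $E_2$-page does recover the Bruns--Schw\"anzl computation (the paper itself gives no proof, only the citation, so your argument is not competing with one in the text). But there is a genuine gap exactly at the step you yourself identify as carrying all the weight, namely the constancy of $R^q\pi_*\L$. You justify it by the equality $\pi_1^{\mathrm{\acute{e}t}}(\AA^s\setminus\{0\})=\pi_1^{\mathrm{\acute{e}t}}(\AA^s)=1$, and the second equality is false in positive characteristic: Artin--Schreier coverings such as $y^p-y=x_1$ pull back to nontrivial finite \'etale covers of $\AA^s$, so $\pi_1^{\mathrm{\acute{e}t}}(\AA^s)$ is very far from trivial when $\chara\kk=p>0$. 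This is precisely the situation in which the paper needs the proposition (Theorem~\ref{thm-S2factor} resorts to \'etale cohomology because local cohomology fails in characteristic $p$). Nor does the correct weaker statement rescue the argument as written: the prime-to-$p$ quotient of $\pi_1^{\mathrm{\acute{e}t}}(\AA^s)$ is indeed trivial, but the monodromy of the rank-one sheaf $R^{2s-3}\pi_*\L$ is a character with values in $(\ZZ/q\ZZ)^\times$, whose order $q-1$ may be divisible by $p$, so a priori an Artin--Schreier-type quotient of order $p$ could act nontrivially.

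The gap is repairable, but by a different mechanism than simple connectedness. The cleanest fix is relative purity: the tautological line subbundle $L\subset B\times\AA^s$ is a smooth closed subscheme of codimension $s-1$, and the Gysin isomorphism in the relative setting identifies $R^{2s-3}\pi_*\L$ with the Tate twist $\L(1-s)$ on $B$ (and shows $R^q\pi_*\L=0$ for $q\neq 0,2s-3$); over the algebraically closed field $\kk$ this twist is (non-canonically) the constant sheaf, so the $E_2$-page is as you claim. Alternatively, your explicit trivializations live over the cover of $B$ by the opens $\{v_{1,j}\neq 0\}$; since $B$ is irreducible, all intersections of these opens are nonempty and connected, so the gluing data is a \v{C}ech $1$-cocycle with values in the constant group $\operatorname{Aut}_{\L}(\L)$ on the nerve of the cover, which is a simplex, hence a coboundary, and the sheaf is constant. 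With either repair the rest of your computation (the sparse $E_2$-page leaves no room for differentials) goes through in every characteristic different from $q$; as written, though, the argument only covers characteristic zero.
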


\begin{thm} \label{thm-S2factor} For the affine cone over the Segre embedding of $\PP^{n_1-1} \times \PP^{n_2-1}$, any separating set has size at least $2n_1+2n_2-4$.
\end{thm}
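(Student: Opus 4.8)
The plan is to mirror the strategy of Theorem~\ref{thm-segvarUB}, but with \'etale cohomology in place of local cohomology so that the argument is characteristic-free. First I would reduce to the nonmodular case: by Lemma~\ref{lem-nonmodSV} we may assume $a_1,a_2\in\kk^*$, so that the homogeneous coordinate ring of $X$ is $\kk[W]^G$ with $G=T\times H$ as in Section~\ref{section-SV} and $T$ of rank $1$. We want to bound below the number of equations needed to cut out the separating variety $\Ss_{W,G}\subseteq W\times W=\AA^{2(n_1+n_2)}$; by Proposition~\ref{prop-upper} it suffices to exhibit a nonvanishing \'etale cohomology group $\Het{i}{(W\times W)\setminus \Ss_{W,G},\L}$ with $i$ as large as possible, namely $i=2(n_1+n_2)-5$, which forces at least $2n_1+2n_2-4$ equations (taking $d=2(n_1+n_2)$ and $k=2n_1+2n_2-4$ in Proposition~\ref{prop-upper}).

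The main step is to produce that nonvanishing class. By Proposition~\ref{prop-sepvarSV}, $\Ss_{W,G}=\bigcup_{\sigma\in H}(1,\sigma)(\Ss_{W,T})$, and in the $(1,1)$ Segre case ($a_1=a_2=1$) there is only one component, with $\Ss_{W,T}=\overline{\Gamma_{W,T}}\cup W_{\hat1}\times W_{\hat1}\cup W_{\hat2}\times W_{\hat2}$, where $\overline{\Gamma_{W,T}}$ is the determinantal variety $V(I_2(M))$ for the $2\times(n_1+n_2)$ matrix $M$ built from $x_{1,\bullet}\otimes1,\,1\otimes x_{2,\bullet}$ over $1\otimes x_{1,\bullet},\,x_{2,\bullet}\otimes1$. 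I would first treat this $(1,1)$ case: here the Bruns--Schw\"anzl computation gives $\Het{4s-4}{\AA^{2s}\setminus V(I_2(M)),\L}\iso\L$ with $s=n_1+n_2$, i.e. $\Het{4(n_1+n_2)-4}{\dots}$, which is already too large — we actually need to account for the whole separating variety, not just the graph. The honest approach is a Mayer--Vietoris (or the excision/long exact sequence for a pair) argument as in Theorem~\ref{thm-segvarUB}: decompose $\Ss_{W,G}$ into the graph closure and the nullcone-product components, control the \'etale cohomological dimension of each piece and of the intersections (which are $\Nn_{W,T}\times\Nn_{W,T}$ by Proposition~\ref{prop-sepvarSV}), and use the Bruns--Schw\"anzl result for the graph piece together with a K\"unneth computation for $\Nn_{W,T}=(x_{1,1},\dots,x_{1,n_1})(x_{2,1},\dots,x_{2,n_2})$ (an \'etale analogue of Lemma~\ref{lem-Nvg_Nvg}) to see that the top cohomology of the complement of $\Ss_{W,G}$ is exactly one degree below that of the complement of the graph, i.e. in degree $2(n_1+n_2)-5$ relative to the ambient $\AA^{2(n_1+n_2)}$ — here one must be careful about the normalization of indices, reconciling the affine-complement \'etale cohomology degree ($4s-4$ for the determinantal hypersurface complement, which via Proposition~\ref{prop-upper} with $d=2s$ yields the arithmetic-rank lower bound $2s-4$). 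So the clean formulation is: show $\Het{4s-4}{(W\times W)\setminus\Ss_{W,G},\L}\neq0$ with $s=n_1+n_2$, whence by Proposition~\ref{prop-upper} (with $d=2s$, $k=2s-4$) the ideal of $\Ss_{W,G}$ needs at least $2s-4=2n_1+2n_2-4$ generators, hence any separating set has that size by Lemma~\ref{lem-seprankLC}'s \'etale analogue.

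Concretely the steps are: (i) reduce to nonmodular, then to $(a_1,a_2)=(1,1)$ via Lemma~\ref{lem-nonmodSV}; (ii) recall $\Ss_{W,T}=\overline{\Gamma_{W,T}}\cup(W_{\hat1})^{\times2}\cup(W_{\hat2})^{\times2}$ and that $\overline{\Gamma_{W,T}}=V(I_2(M))$; (iii) apply the Bruns--Schw\"anzl proposition to get $\Het{4s-4}{\AA^{2s}\setminus\overline{\Gamma_{W,T}},\L}\iso\L$ with vanishing above; (iv) run a Mayer--Vietoris argument over $\AA^{2s}\setminus(\text{union of the nullcone-product pieces})$, bounding the \'etale cohomological dimension of each $W_{\hat k}\times W_{\hat\ell}$ and of $\Nn_{W,T}\times\Nn_{W,T}$ by an \'etale K\"unneth plus a Mayer--Vietoris spectral sequence computation of the cohomological dimension of $\AA^{n}\setminus V((x_1,\dots,x_{n_1})(x_{n_1+1},\dots,x_{n}))$ (the \'etale twin of Lemma~\ref{lem-Nvg_Nvg}), showing these are strictly smaller than $4s-4$; (v) conclude the class in degree $4s-4$ survives to $\Het{4s-4}{\AA^{2s}\setminus\Ss_{W,G},\L}$, apply Proposition~\ref{prop-upper}, and invoke the \'etale-cohomology version of Lemma~\ref{lem-seprankLC} (arithmetic rank of $\I(\Ss_{W,G})$ bounds the minimal separating set size below). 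The main obstacle I anticipate is step (iv): proving the sharp vanishing bounds for the \'etale cohomology of the complements of the nullcone-product pieces (the $W_{\hat k}\times W_{\hat\ell}$ and their pairwise intersections), i.e. establishing the \'etale analogues of Lemma~\ref{lem-Kunneth} and Lemma~\ref{lem-Nvg_Nvg} with enough precision that the determinantal class in the top degree is not killed by a differential in the Mayer--Vietoris sequence; once that cohomological-dimension bookkeeping is pinned down, the rest follows the template of Theorem~\ref{thm-segvarUB} verbatim.
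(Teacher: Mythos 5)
Your plan is essentially the paper's proof: reduce to the rank-one torus picture, use the decomposition $\Ss_{W,T}=\overline{\Gamma_{W,T}}\cup (W_{\hat1}\times W_{\hat1})\cup(W_{\hat2}\times W_{\hat2})$ from Proposition~\ref{prop-sepvarSV}, feed the Bruns--Schw\"anzl computation for $V(I_2(M))$ into a Mayer--Vietoris argument, and conclude with Proposition~\ref{prop-upper}. Two caveats. First, the step you defer as ``the main obstacle'' (your step (iv)) is exactly where the paper's proof lives: it converts the Bruns--Schw\"anzl nonvanishing into cohomology with supports, applies the Gysin isomorphism twice to handle pieces of the form $Z\times\{0\}\subseteq \AA^{2s}\times\AA^t$ (this is how the intersections $\overline{\Gamma_{W,T}}\cap(W_{\hat k}\times W_{\hat k})$, which are smaller determinantal varieties sitting inside linear subspaces, are controlled --- no \'etale analogue of Lemma~\ref{lem-Nvg_Nvg} is needed, since the relevant intersections reduce to $\{0\}$ or to $Z'\times\{0\}$), and then chains three explicit Mayer--Vietoris sequences to show the degree-$(4n_1+4n_2-4)$ class survives to $\Het{4n_1+4n_2-4}{A\setminus\Ss}{}$. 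So the proposal is a correct outline, but the content of the theorem is precisely the bookkeeping you have not carried out. Second, your applications of Proposition~\ref{prop-upper} contain index slips: nonvanishing should be sought in degree $4(n_1+n_2)-4$ (not $2(n_1+n_2)-5$), and with $d=2s$ and $i=d+k-1=4s-4$ one gets $k=2s-3$, not $k=2s-4$; this only strengthens the conclusion, but as written your arithmetic is inconsistent.
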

\begin{proof}
By the long exact sequence
\[ \cdots\rightarrow\Het{r}{\AA^{2s}, \L} \rightarrow\Het{r}{\AA^{2s}\setminus Z, \L} \rightarrow \Hetl{r+1}{Z}{\AA^{2s}, \L} \rightarrow \Het{r+1}{\AA^{2s}, \L} \rightarrow \cdots\]
it follows that $\Hetl{4s-3}{Z}{\AA^{2s}, \L}\iso \L$ and the higher such groups vanish. Then, the Gysin isomorphism yields $\Het{2s-1}{Z,\L}\iso \L$, with the higher ones vanishing. Another application of the Gysin isomorphism yields $\Hetl{4s+2t-3}{Z\times \{0\}}{\AA^{2s}\times \AA^{t},\L}\iso \L$, where $0$ is the origin in $\AA^t$. Applying the sequence above, we obtain
\[\Het{4s+2t-4}{(\AA^{2s}\times \AA^{t})\setminus (Z\times \{0\}),\L}\iso \L\]
and the higher groups vanish.

By Proposition \ref{prop-sepvarSV}, part \ref{prop-sepvarS2}, the separating variety decomposes as
\[
 \Ss_{W,T}=\overline{\Gamma_{W,T}}\cup W_{\hat{1}}\times W_{\hat{1}} \cup W_{\hat{2}}\times W_{\hat{2}},
\]

Write $D,X,Y$ to denote $\overline{\Gamma_{W,T}},W_{\hat{1}},W_{\hat{2}}$, respectively, $A=\AA^{2(m+n)}$, and $S$ for the separating variety $W\cup X \cup Y$. For all sequences below, we consider \'etale cohomology with coefficients in $\L$.

We obtain one Mayer-Vietoris sequence:
\[\cdots\rightarrow\Het{i}{A\setminus\{0\}}\rightarrow \Het{i}{A\sm X}\oplus\Het{i}{A \sm Y}\rightarrow \Het{i}{A \sm (X \cup Y)} \rightarrow \Het{i+1}{A\setminus\{0\}}\rightarrow\cdots\]
from which we conclude that
\[ \Het{i}{A\sm X}\oplus\Het{i}{A \sm Y}\iso \Het{i}{A \sm (X \cup Y)}\]
by the natural inclusion maps for $i<4n_1+4n_2-1$.

From the Mayer-Vietoris sequence:
\begin{align*}
\cdots\rightarrow\Het{i}{A\setminus\{0\}}\rightarrow&\Het{i}{A\setminus(D\cap X)} \oplus \Het{i}{A\setminus(D\cap Y)} \\
&\rightarrow \Het{i}{A\sm (D\cap (X\cup Y))}
\rightarrow\Het{i+1}{A\setminus\{0\}}\rightarrow\cdots\end{align*}
we conclude
 \[\Het{i}{A\setminus(D\cap X)} \oplus \Het{i}{A\setminus(D\cap Y)} \iso \Het{i}{A\sm (D\cap (X\cup Y))}\] for $i<4n_1+4n_2-1$.

We consider one more Mayer-Vietoris sequence:
\begin{align*}
\cdots\rightarrow\Het{i}{A\sm (D\cap (X\cup &Y))}\rightarrow\Het{i}{A \sm D} \oplus \Het{i}{A \sm (X \cup Y)} \\&\rightarrow \Het{i}{A \sm S}
\rightarrow\Het{i+1}{A\sm (D\cap (X\cup Y))}\rightarrow\cdots
\end{align*}
which, assuming $n_1\gs 3$, applying the consequences of the long exact sequences above also reads
\begin{align*}
\cdots\rightarrow\Het{i}{A&\setminus(D\cap X)} \oplus \Het{i}{A\setminus(D\cap Y)}\rightarrow\Het{i}{A \sm D} \\&\rightarrow \Het{i}{A \sm S}
\rightarrow\Het{i+1}{A\setminus(D\cap X)} \oplus \Het{i+1}{A\setminus(D\cap Y)}\rightarrow\cdots
\end{align*}
for $4n_2<i<4n_2+4n_2-1$. In particular, for $t=4n_2+4n_2-3$,
\[
\cdots \rightarrow\Het{t-1}{A\sm S}\rightarrow \Het{t}{A\setminus(D\cap X)} \oplus \Het{t}{A\setminus(D\cap Y)}
\rightarrow\Het{t}{A \sm D}
\rightarrow\cdots
\]
which computes as
\[
\cdots \rightarrow\Het{t-1}{A\sm S}\rightarrow \L\times\L
\rightarrow\L
\rightarrow\cdots
\]
Thus, $\Het{4n_1+4n_2-4}{A\sm S}\neq 0$. The theorem then follows from Proposition~\ref{prop-upper}.
\end{proof}

\begin{proof}[Proof of Theorem~\ref{thm-MinSizeSV}]
Recall that we can reduce to the case of nonmodular Segre-Veronese varieties by Lemma~\ref{lem-nonmodSV}. The upper bounds are given by Proposition \ref{prop-UBSV} and the lower bounds are given by \cite[Main Theorem]{ed-jj:silc} for Case~(\ref{thm-MinSizeVero}), Theorem~\ref{thm-segvarUB} for Case~(\ref{thm-MinSizeSV1}), Proposition~\ref{prop-LBS} for Case~(\ref{thm-MinSizeSV3}), and Theorem~\ref{thm-S2factor} for Case~(\ref{thm-MinSizeSV2}). In Cases~(\ref{thm-MinSizeVero})~and~(\ref{thm-MinSizeSV2}), the upper and lower bounds coincide.
\end{proof}




\section{Monomial separating sets}\label{section-MonomialSepSets}

The focus of this section is on the invariants of representations of tori and their monomial separating sets. We include as a special case those representations whose ring of invariants is isomorphic to the ring of homogeneous coordinates on Segre-Veronese varieties.

\subsection{Combinatorial characterization of monomial separating subalgebras}

A monomial subalgebra of the invariant ring will be given by a subsemigroup $\Ss\subseteq \Ll$. For $I\subseteq \{1,\ldots,n\}$, set $\Ss_I=\Ll_I\cap \Ss$.

\begin{prop}\label{prop-SsepCharp}
 Suppose $\kk$ has positive characteristic $p$. The subsemigroup ${\Ss\subseteq \Ll}$ gives a separating algebra if and only if there exist $m\gs 1$ such that $p^m\Ll\subseteq \Ss$.
\end{prop}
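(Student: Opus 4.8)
I want to characterize when a subsemigroup $\Ss \subseteq \Ll$ gives a separating algebra $\kk[x^\alpha \mid \alpha \in \Ss]$ in positive characteristic $p$. The key fact I would exploit is that two points $u, v \in V$ are separated by the full invariant ring $\kv^T$ (equivalently, $\pi_T(u) = \pi_T(v)$) if and only if they are separated by $\Ss$ exactly when the inclusion $\kk[\Ss] \subseteq \kv^T$ induces an injective morphism on spectra — this is the reductive-group criterion already recalled in the excerpt (cf. \cite[Theorem~2.2]{ed:sifrg} and \cite[Section~2.3.2]{hd-gk:cit}). So the statement reduces to: the inclusion of monomial algebras $\kk[\Ss] \hookrightarrow \kk[\Ll]$ induces a bijection on $\Spec$ iff $p^m \Ll \subseteq \Ss$ for some $m \geq 1$.

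\emph{The ``if'' direction.} Suppose $p^m \Ll \subseteq \Ss$. I would argue purely with the Frobenius. For each $\alpha \in \Ll$, the monomial $(x^\alpha)^{p^m} = x^{p^m \alpha}$ lies in $\kk[\Ss]$. Hence $\kk[\Ll]$ is contained in the purely inseparable (radicial) extension of $\kk[\Ss]$: every element of $\kk[\Ll]$ has a $p^m$-th power in $\kk[\Ss]$. A purely inseparable ring extension induces a universal homeomorphism on spectra — in particular a bijection — so the morphism $\Spec \kk[\Ll] \to \Spec \kk[\Ss]$ is bijective, and therefore $\Ss$ is a separating algebra. (Concretely, if $u, v \in V$ satisfy $x^\beta(u) = x^\beta(v)$ for all $\beta \in \Ss$, then taking $\beta = p^m \alpha$ gives $(x^\alpha(u))^{p^m} = (x^\alpha(v))^{p^m}$, and since $\kk$ is a field of characteristic $p$ the $p^m$-th power map is injective, whence $x^\alpha(u) = x^\alpha(v)$ for all $\alpha \in \Ll$.)

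\emph{The ``only if'' direction.} This is where the work is, and I expect it to be the main obstacle. Suppose $\Ss$ is separating but $p^m \Ll \not\subseteq \Ss$ for every $m$. I want to produce two points of $V$ that $\Ll$ separates but $\Ss$ does not. The natural idea: pick $\alpha \in \Ll$ whose scalar multiples $p^m \alpha$ never lie in $\Ss$ — first one must check such an $\alpha$ exists, using that $\Ll$ is finitely generated and $\Ss$ is a subsemigroup, so if $p^m \Ll \not\subseteq \Ss$ for all $m$ then (since there are finitely many semigroup generators of $\Ll$) some single generator $\alpha$ has $p^m \alpha \notin \Ss$ for all $m \geq 1$. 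Then I would build points $u, v$ supported on $\supp(\alpha)$, say $u$ with all relevant coordinates equal to $1$ and $v$ with all relevant coordinates equal to some scalar $c$, arranged so that $x^\beta(u) = x^\beta(v)$ for all $\beta \in \Ss$ but $x^\alpha(u) \neq x^\alpha(v)$. The delicate point is that membership $\beta \in \Ss$ with $\supp(\beta) \subseteq \supp(\alpha)$ doesn't obviously force $\beta$ to be a multiple of $\alpha$; one needs to understand $\Ss_{\supp(\alpha)} = \Ll_{\supp(\alpha)} \cap \Ss$ and separate the "multiple of $\alpha$" directions from the rest. I would handle this by choosing $v$ to differ from $u$ only along a one-dimensional torus-like direction dual to $\alpha$: concretely, pick a vector $\lambda \in \ZZ^n$ pairing nontrivially with $\alpha$ but such that the pairing with every $\beta \in \Ss_{\supp(\alpha)}$ lies in $p\ZZ$ in a controlled way — the obstruction to doing this with a single $\lambda$ is exactly that $p^m\alpha \notin \Ss$, and I'd extract $\lambda$ from the structure of the lattice $\ZZ\alpha$ inside $\ker_\ZZ A_{\supp(\alpha)}$ modulo the sublattice generated by $\Ss_{\supp(\alpha)}$, using that a finitely generated abelian group with an element not divisible by arbitrarily high powers of $p$ admits a character detecting this. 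Then set $v = (\text{appropriate } p\text{-power scaling of coordinates of } u)$ so that $x^\beta(v) = x^\beta(u)$ for $\beta \in \Ss$ — using again that $c \mapsto c^p$ can be undone — while $x^\alpha(v) \neq x^\alpha(u)$. The bookkeeping of exactly which scalars to use, and verifying that the chosen points genuinely lie in $V$ with the required separation properties, is the routine-but-fiddly part I would expect to occupy most of the actual proof.
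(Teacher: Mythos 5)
Your ``if'' direction is correct and is essentially the paper's argument (Frobenius injectivity on the field $\kk$; the universal-homeomorphism framing adds nothing but is harmless). The problem is in the ``only if'' direction, and it sits exactly where you flagged the ``delicate point.'' You assert that ``the obstruction to doing this with a single $\lambda$ is exactly that $p^m\alpha\notin\Ss$.'' That is false: the obstruction to finding a character of $\ZZ^n$ into $\kk^*$ that is trivial on $\Ss_{\supp(\alpha)}$ but nontrivial on $\alpha$ is that $p^m\alpha$ lies in the \emph{group} $\ZZ\Ss_{\supp(\alpha)}$ for some $m$ (since $\kk^*$ has no $p$-torsion, characters detect precisely the non-$p$-power-torsion of $\ZZ^n/\ZZ\Ss_{\supp(\alpha)}$). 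Membership in the group generated by $\Ss_{\supp(\alpha)}$ is strictly weaker than membership in the semigroup $\Ss$. Concretely, take the rank-one torus acting on $\kk^3$ with weight matrix $A=(1,-2,1)$, so $\Ll$ is generated by $(2,1,0)$, $(0,1,2)$, $(1,1,1)$, and let $\Ss=\langle(2,1,0),(3,2,1)\rangle$ with $p$ odd. The generator $\alpha=(1,1,1)$ satisfies $p^m\alpha\notin\Ss$ for every $m$ (elements of $\Ss$ have the form $(2a+3b,\,a+2b,\,b)$ with $a,b\gs 0$, which is never a positive multiple of $(1,1,1)$), yet $\alpha=(3,2,1)-(2,1,0)\in\ZZ\Ss$, and in fact $\ZZ\Ss=\ker_\ZZ A$. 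So no character exists, and indeed no pair of points of full support can witness non-separation via this $\alpha$: whenever $x^{(2,1,0)}$ and $x^{(3,2,1)}$ agree and are nonzero, so does $x^{(1,1,1)}$, and the ``zero versus nonzero'' separations by $x^{(1,1,1)}$ are also detected by $\Ss$. Here $\Ss$ really is non-separating, but the witness comes from the \emph{other} generator $(0,1,2)$ (every nonzero element of $\Ss$ involves $x_1$, so $\Ss$ cannot distinguish $(0,1,1)$ from $(0,1,\zeta)$ with $\zeta^2\neq 1$). Your reduction to ``some generator $\alpha$ with $p^m\alpha\notin\Ss$ for all $m$'' is fine, but that generator need not be one for which your construction succeeds, and your sketch gives no mechanism for locating a usable witness in general. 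Repairing this amounts to proving a full combinatorial characterization of separating subsemigroups in characteristic $p$ (support conditions on every $\Ss_I$ plus lattice conditions, in the spirit of Lemma~\ref{lem-SSep} and Proposition~\ref{prop-sepalg}) and then showing those conditions force $p^m\Ll\subseteq\Ss$ --- which is the actual content of the statement, not routine bookkeeping.

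The paper's proof of this direction is completely different and much shorter: it cites the structural fact that a graded separating subalgebra $A\subseteq\kx^T$ makes $\kx^T$ a \emph{finite, purely inseparable} extension of $A$. Hence each $(x^\alpha)^{p^{m_\alpha}}\in A$, and since $A$ is a monomial algebra this means $p^{m_\alpha}\alpha\in\Ss$; finiteness then gives a uniform $m$. Given that you already invoked the reductive-group criterion identifying separating sets with injectivity of $\vg\to\Spec\kk[\Ss]$, appealing to this purely-inseparable-closure result (rather than constructing explicit non-separated points) is the missing ingredient that closes your argument.
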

\begin{proof}
 Suppose there exist $m\gs 1$ such that $p^m\Ll\subseteq \Ss$. Take $u,v\in V$ and suppose they are separated by some invariant. Without loss of generality we may assume they are separated by $x^\alpha$ with $\alpha\in \Ll$. By our assumption, $p^m\alpha=\gamma\in \Ss$. Then $x^\gamma$ separates $u$ and $v$. Indeed, otherwise we have \[(x^\alpha(u))^{p^m}=x^\gamma(u)=x^\gamma(v)=(x^\alpha(v))^{p^m}\,,\]
 and so $x^\alpha(u)=x^\alpha(v)$, a contradiction.
 
 Now suppose $\Ss$ gives a separating algebra $A$. As this algebra is a graded subalgebra, it follows that the extension $A\subseteq \kx^T$ is finite and $\kx^T$ is the purely inseparable closure of $A$ in $\kx$ (see \cite[Remark 1.3]{hd-gk:ciagac} or \cite[Theorem 4]{fdg:viac}). Hence, for any $x^\alpha\in\kx^T$, there exist $m_\alpha\in \NN$ such that $(x^\alpha)^{p^{m_\alpha}}\in A$. The finiteness of the extension ensures that there exists a natural number $m$ such that $(x^\alpha)^{p^{m}}\in A$ for all $\alpha\in\Ll$. It follows that $p^m\Ll\subseteq\Ss$.
\end{proof}

\begin{lem} \label{lem-SSep}
 Suppose that for all $I\subseteq [n]$, $\Ll_I\subseteq \ZZ\Ss_I$, and for all $\alpha\in \Ll$, $\alpha_i\neq 0$ implies that there exist $\gamma\in \Ss$ such that  $i\in\supp (\gamma)$ and $\supp (\gamma) \subseteq \supp (\alpha)$. Then $\Ss$ gives a separating algebra.
\end{lem}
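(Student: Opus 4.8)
The goal is to show that under the stated hypotheses—namely that $\Ll_I \subseteq \ZZ\Ss_I$ for every $I \subseteq [n]$, and that every coordinate occurring in some $\alpha \in \Ll$ is ``covered'' by a semigroup element $\gamma \in \Ss$ with $\supp(\gamma) \subseteq \supp(\alpha)$—the monomial subalgebra $A = \spn_\kk\{x^\gamma \mid \gamma \in \Ss\}$ separates the $T$-orbits, i.e.\ $E = \{x^\gamma \mid \gamma \in \Ss\}$ is a separating set. By the characterization of separating sets recalled in Section~\ref{section-SepVar}, it suffices to show: whenever $u,v \in V$ are \emph{not} separated by any element of $\Ss$, they are not separated by any $x^\alpha$ with $\alpha \in \Ll$. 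So fix such a pair $(u,v)$ and an arbitrary $\alpha \in \Ll$; I must prove $x^\alpha(u) = x^\alpha(v)$.

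\textbf{Key steps.} First I would reduce to the case where $u$ and $v$ have the same support: if $i \in \supp(u)\setminus\supp(v)$ (say), then the covering hypothesis applied to any $\alpha \in \Ll$ with $\alpha_i \neq 0$ produces $\gamma \in \Ss$ with $i \in \supp(\gamma) \subseteq \supp(\alpha)$, whence $x^\gamma(v) = 0$ while, since $(u,v)$ is not separated by $x^\gamma$, also $x^\gamma(u) = 0$, forcing $u_j = 0$ for some $j \in \supp(\gamma)$. Iterating this bookkeeping—essentially, one shows that on the coordinates where $u$ and $v$ disagree about vanishing, every relevant monomial is forced to vanish on both—I reduce to $\supp(u) = \supp(v) =: I$, and moreover it suffices to treat $\alpha$ with $\supp(\alpha) \subseteq I$ (monomials with support not inside $I$ vanish on both $u$ and $v$). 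Now set $I = \supp(u) = \supp(v)$. On this locus, consider the character $\chi \colon (\kk^*)^I \to \kk^*$... more concretely: since $u_i, v_i \neq 0$ for $i \in I$, define $t \in (\kk^*)^I$ by $t_i = u_i/v_i$. Then for $\beta \in \ZZ^n$ with $\supp(\beta)\subseteq I$ we have $x^\beta(u) = t^\beta \cdot x^\beta(v)$ (interpreting $t^\beta = \prod_{i}(u_i/v_i)^{\beta_i}$, which makes sense as all base entries are units). The hypothesis that $(u,v)$ is not separated by any $x^\gamma$, $\gamma \in \Ss_I$, says precisely that $t^\gamma = 1$ for all $\gamma \in \Ss_I$ (here I should note $x^\gamma(v) \neq 0$ for $\gamma \in \Ss_I$ since $\supp(\gamma) \subseteq I$ and $v$ is nonzero on $I$). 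Hence $t^\delta = 1$ for all $\delta \in \ZZ\Ss_I$. By the first hypothesis $\alpha \in \Ll_I \subseteq \ZZ\Ss_I$, so $t^\alpha = 1$, giving $x^\alpha(u) = x^\alpha(v)$, as required.

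\textbf{Main obstacle.} The genuinely delicate part is the first step: carefully managing the case $\supp(u) \neq \supp(v)$ and showing the covering hypothesis does force $x^\alpha(u) = x^\alpha(v) = 0$ for every $\alpha \in \Ll$ whose support is not contained in both $\supp(u)$ and $\supp(v)$. One has to argue that if $\supp(\alpha) \not\subseteq \supp(u)$ then $x^\alpha(u) = 0$ trivially, and otherwise $\supp(\alpha) \subseteq \supp(u)$ but $\supp(\alpha) \not\subseteq \supp(v)$, pick $i \in \supp(\alpha)\setminus \supp(v)$, apply the covering hypothesis to get $\gamma \in \Ss$ with $i \in \supp(\gamma)\subseteq \supp(\alpha) \subseteq \supp(u)$; then $x^\gamma(u) \neq 0$ but $x^\gamma(v) = 0$, so $(u,v)$ \emph{is} separated by $x^\gamma \in \Ss$, contradicting our assumption. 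This contradiction shows that once we restrict to a non-separated pair, in fact $\supp(u)$ and $\supp(v)$ must agree after all on the coordinates relevant to $\Ll$—one should phrase this cleanly, perhaps by first replacing $u,v$ by their restrictions to $I := \supp(u) \cap \supp(v)$ and checking this changes nothing. Beyond this combinatorial case analysis, everything else is the routine linear-algebra-over-$\ZZ$ argument with the torus element $t = u/v$ sketched above.
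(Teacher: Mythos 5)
Your argument is correct and is essentially the paper's own proof in contrapositive form: the covering hypothesis is used exactly as the paper uses it to handle the case where exactly one of $x^\alpha(u),x^\alpha(v)$ vanishes, and in the both-nonzero case your observation that the ratio vector $t=u/v$ is trivial on $\ZZ\Ss_I$ (with $\alpha\in\Ll_I\subseteq\ZZ\Ss_I$) is the same computation the paper performs by writing $\alpha=\gamma-\gamma'$ with $\gamma,\gamma'\in\Ss_I$ and comparing $x^\gamma/x^{\gamma'}$ on $u$ and $v$. The preliminary reduction to $\supp(u)=\supp(v)$ is unnecessary (your own case analysis per $\alpha$ in the final paragraph already suffices), but it causes no harm.
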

\begin{proof}
 Take $u,v\in V$ and suppose they are separated by some invariant, without loss of generality, suppose they are separated by $x^\alpha$ with $\alpha\in \Ll$. Suppose first that $x^\alpha(u)=0\neq x^\alpha(v)$. As $x^\alpha(u)=0$, there exists $i\in \supp (\alpha)$ such that $u_i=0$. By our assumption, there exist $\gamma\in \Ss$ such that $i\in \supp (\gamma)$ and $\supp (\gamma)\subseteq \supp (\alpha)$. Then as $v_j\neq 0$ for all $j\in \supp (\alpha)$, we have $x^\gamma(u)=0\neq x^\gamma (v)$.
 
 Suppose now that both $x^\alpha(u)$ and $x^\alpha(v)$ are nonzero. Then $u_i$ and $v_i$ are nonzero for all $i\in\supp (\alpha)$. By our assumption, there exist $\gamma,\gamma'\in \Ss$ such that $\alpha=\gamma-\gamma'$. Then one of $x^\gamma$ or $x^{\gamma'}$ must separate $u$ and $v$. Indeed, otherwise we have
 \[x^\alpha(u)=\frac{x^\gamma(u)}{x^{\gamma'}(u)}=\frac{x^\gamma(v)}{x^{\gamma'}(v)}=x^\alpha(v)\,,
 \]
a contradiction.
\end{proof}

\begin{prop}\label{prop-sepalg}
 Suppose $\kk$ has characteristic zero. Then the following are equivalent:
 
\begin{enumerate}
 \item $\Ss$ gives a separating algebra. \label{prop-sepalg1}
 
 \item For all $I\subseteq [n]$, $\Ll_I\subseteq \ZZ\Ss_I$, and for all $\alpha\in \Ll$, $\alpha_i\neq 0$ implies that there exist $\gamma\in \Ss$ such that  $i\in\supp (\gamma)$ and $\supp (\gamma) \subseteq \supp (\alpha)$. \label{prop-sepalg2}
 
 \item For any prime number $p$ there exist $m\gs 1$ such that  $p^m\Ll\subseteq \Ss$. \label{prop-sepalg3}
 
 \item There exist prime numbers $p,q$ and an $m\gs 1$ such that $p^m\Ll\subseteq \Ss$ and $q^m\Ll\subseteq \Ss$. \label{prop-sepalg4}

\end{enumerate}
\end{prop}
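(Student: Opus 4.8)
The plan is to prove the cycle $(2)\Rightarrow(3)\Rightarrow(4)\Rightarrow(2)$ of purely semigroup-theoretic statements and to add the two arrows $(1)\Rightarrow(2)$ and $(2)\Rightarrow(1)$; together these give all four equivalences. The arrow $(2)\Rightarrow(1)$ is already Lemma~\ref{lem-SSep}, whose proof is characteristic-free. The arrow $(3)\Rightarrow(4)$ is immediate after noting that $p^m\Ll\subseteq\Ss$ forces $p^{m'}\Ll\subseteq\Ss$ for all $m'\ge m$, since $\Ss$ is a submonoid and $p^{m'}\alpha$ is an $\NN$-multiple of $p^m\alpha$; so for two primes one can pass to a common exponent. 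For $(4)\Rightarrow(2)$, given $p^m\Ll\subseteq\Ss$ and $q^m\Ll\subseteq\Ss$ with $p\ne q$, write $1=ap^m+bq^m$ with $a,b\in\ZZ$; for $\alpha\in\Ll_I$ the elements $p^m\alpha,q^m\alpha$ lie in $\Ss$ and have support $\supp\alpha\subseteq I$, hence lie in $\Ss_I$, so $\alpha=a(p^m\alpha)+b(q^m\alpha)\in\ZZ\Ss_I$; and for $i\in\supp\alpha$ the element $\gamma:=p^m\alpha\in\Ss$ satisfies $i\in\supp\gamma=\supp\alpha$, giving the support condition.

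For $(2)\Rightarrow(3)$ I would \emph{not} argue combinatorially: condition $(2)$ is independent of the field, and the proof of Lemma~\ref{lem-SSep} is valid over any field, so if $\Ss$ satisfies $(2)$ then $\Ss$ gives a separating algebra over a field of characteristic $p$ for every prime $p$; Proposition~\ref{prop-SsepCharp}, applied over such a field, then yields an $m\ge1$ with $p^m\Ll\subseteq\Ss$, which is exactly $(3)$.

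The remaining, and only genuinely characteristic-zero, step is $(1)\Rightarrow(2)$, which I would prove contrapositively by producing, when $(2)$ fails, two points of $V$ separated by an invariant but by no monomial of $\Ss$. If the support part of $(2)$ fails, there is $\alpha\in\Ll$ and $i_0\in\supp\alpha$ with no $\gamma\in\Ss$ satisfying $i_0\in\supp\gamma\subseteq\supp\alpha$; put $I=\supp\alpha$ and take $v$ with $v_j=1$ for $j\in I$ and $v_j=0$ otherwise, and $u$ with $u_j=1$ for $j\in I\setminus\{i_0\}$ and $u_j=0$ otherwise. Then $x^\alpha(u)=0\ne x^\alpha(v)$, while for $\gamma\in\Ss$ the value $x^\gamma(v)$ is nonzero only if $\supp\gamma\subseteq I$, in which case $x^\gamma(u)$ is also nonzero unless $i_0\in\supp\gamma$ — a case excluded by hypothesis — so no $x^\gamma$ separates $u,v$. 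If instead the lattice part fails, choose $I$ and $\alpha\in\Ll_I\setminus\ZZ\Ss_I$; replacing $I$ by $\supp\alpha$ (which only shrinks $\ZZ\Ss_I$) we may assume $\supp\alpha=I$. Set $L=\ZZ\Ss_I\subseteq\ZZ^I$. The image of $\alpha$ in the finitely generated abelian group $\ZZ^I/L$ is nonzero, and since $\kk$ is algebraically closed of characteristic zero its multiplicative group contains elements of infinite order and primitive roots of unity of every order, so the characters of $\ZZ^I/L$ separate points; hence there is $t\in(\kk^*)^I=\operatorname{Hom}(\ZZ^I,\kk^*)$ with $t^\gamma=1$ for all $\gamma\in L$ and $t^\alpha\ne1$. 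Regarding $t$ and the indicator point of $I$ as points of $V$ supported on $I$, the invariant $x^\alpha$ separates them, while every $x^\gamma$ with $\gamma\in\Ss$ either vanishes on both (when $\supp\gamma\not\subseteq I$) or takes the value $t^\gamma=1$ on both (when $\gamma\in\Ss_I\subseteq L$). In either case $\Ss$ is not separating. The main obstacle is this last construction: producing a point $t$ of the subtorus cut out by $\Ss_I$ on which $x^\alpha$ is nontrivial requires that every torsion character of $\ZZ^I/L$ be realized by a root of unity in $\kk$, which fails exactly for $p$-torsion in characteristic $p$ — the precise reason the correct condition there is $p^m\Ll\subseteq\Ss$ rather than $\Ll_I\subseteq\ZZ\Ss_I$.
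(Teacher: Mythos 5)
Your proof is correct, and while three of the four arrows track the paper closely, the argument for the key implication is genuinely different. Your (2)$\Rightarrow$(3) is exactly the paper's (field-independence of condition (2), Lemma~\ref{lem-SSep} over a field of characteristic $p$, then Proposition~\ref{prop-SsepCharp}), and your (3)$\Rightarrow$(4) even supplies the common-exponent detail the paper dismisses as ``immediate.'' You close the cycle differently: the paper proves (4)$\Rightarrow$(1) directly, arguing that if neither $x^{p^m\alpha}$ nor $x^{q^m\alpha}$ separates, then $x^\alpha(u)/x^\alpha(v)$ is simultaneously a $p^m$-th and $q^m$-th root of unity, hence $1$; you instead prove (4)$\Rightarrow$(2) via B\'ezout ($1=ap^m+bq^m$) and then reuse Lemma~\ref{lem-SSep} for (2)$\Rightarrow$(1) --- the same coprimality idea, reorganized, and both you (explicitly) and the paper (implicitly) read (4) as requiring $p\neq q$, which is indeed necessary for the proposition to hold. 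The substantive divergence is (1)$\Rightarrow$(2): for the lattice condition $\Ll_I\subseteq\ZZ\Ss_I$ the paper quotes general theory --- restriction of a separating set to a $T$-stable subspace is separating for reductive groups, and in characteristic zero a separating algebra has the same fraction field as the invariant ring (Derksen--Kemper) --- whereas you build an explicit witness pair: a character of $\ZZ^I/\ZZ\Ss_I$ nontrivial on $\alpha$, realized as a point $t\in(\kk^*)^I$ killed by $\Ss_I$ but not by $x^\alpha$, compared against the indicator point of $I$. (Your support-condition argument uses the same $0$--$1$ points as the paper, just phrased contrapositively.) Your construction is valid: $\kk^*$ in characteristic zero contains all roots of unity and an element of infinite order, so characters into $\kk^*$ separate points of the finitely generated group $\ZZ^I/\ZZ\Ss_I$. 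The paper's route is shorter given the cited results; yours is self-contained, avoids the fraction-field theorem, and has the virtue of isolating exactly where characteristic zero enters (realizability of $p$-torsion characters), which dovetails nicely with Proposition~\ref{prop-SsepCharp}.
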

\begin{proof}
 (\ref{prop-sepalg1})$\Rightarrow$ (\ref{prop-sepalg2}): Suppose that $\Ss$ gives a separating algebra. As $T$ is reductive, the restriction of any separating set to a $T$-stable subspace gives a separating set \cite[Theorem 2.3.16]{hd-gk:cit}. Hence for any subset $I\subseteq [n]$, $\Ss_I$ must give a separating algebra in $\kk[V_I]^T$. As we assume $\kk$ has characteristic zero, the field of fractions of the separating algebra given by $\Ss_I$ coincides with the field of fractions  of the invariant ring \cite[Proposition 2.3.10]{hd-gk:cit}, that is $\ZZ\Ss_I=\ZZ\Ll\supseteq \Ll$. Now take $\alpha\in \Ll$ and suppose $i_0\in \supp (\alpha)$. Consider the points $u,v\in V$ defined as
 \begin{align*}
    u_i &=  \left\{\begin{array}{cl}
		1 & i\in \supp (\alpha)\\
		0 & \text{otherwise}\end{array}\right. \\
    v_i &=  \left\{\begin{array}{cl}
		1 & i\in \supp (\alpha)\setminus \{i_0\}\\
		0 & \text{otherwise}\,. \end{array}\right.
   \end{align*}
Then $u,v\in V_{\supp (\alpha)}$ and $x^\alpha(u)=1\neq 0=x^\alpha(v)$. As $\Ss_{\supp (\alpha)}$ gives a separating algebra, there exist $\gamma \in \Ss_{\supp (\alpha)}$ such that $x^\gamma(u)\neq x^\gamma(v)$. It follows that $i_0\in\supp (\gamma)$, since otherwise $x^\gamma(u)=1=x^\gamma(v)$, a contradiction.

(\ref{prop-sepalg2})$\Rightarrow$ (\ref{prop-sepalg3}): The integer matrix $A$ gives a representation of the torus of rank $r$ over any field. Condition (\ref{prop-sepalg2}) does not involve the base field at all. Thus, if (\ref{prop-sepalg2}) holds, we can think of it as holding over a field of any characteristic $p$. Then by Lemma \ref{lem-SSep}, $\Ss$ gives a separating algebra over any field, and by Proposition \ref{prop-SsepCharp}, it follows that for any prime $p$ there exists $m\in\NN$ such that $p^m\Ll\subseteq \Ss$.

(\ref{prop-sepalg3})$\Rightarrow$ (\ref{prop-sepalg4}): Immediate.

(\ref{prop-sepalg4})$\Rightarrow$ (\ref{prop-sepalg1}): Take $u,v\in V$ and suppose they are separated by some invariant, without loss of generality we may assume they are separated by $x^\alpha$ with $\alpha\in \Ll$. By our assumption, $p^{m}\alpha =\gamma_1\in \Ss$ and $q^{m}\alpha =\gamma_2\in \Ss$. If $x^{\alpha}(u)=0\neq x^\alpha(v)$, then $x^{\gamma_1}(u)=(x^\alpha(u))^{p^{m}}=0\neq (x^\alpha(v))^{q^{m}}=x^{\gamma_2}(v)$. So we may suppose both $x^\alpha(u)$ and $x^\alpha(v)$ are nonzero. One of $x^{\gamma_1}$ or $x^{\gamma_2}$ must separate $u$ and $v$. Indeed, otherwise $x^{\alpha}(u)/x^\alpha(v)$ is both a $p^{m}$-th and a $q^{m}$-th root of unity, that is, $x^{\alpha}(u)/x^\alpha(v)=1$, a contradiction.

\end{proof}


\subsection{Invariants with small support separate}

We apply the results of the previous section to give a bound, for a torus action $T$, on the number $r$ such that there exists a separating set consisting of elements that each involve at most $r$ variables. In \cite{md-es:hdag}, Domokos and Szab\'o define invariants of algebraic groups to bound this number for actions of algebraic groups on product varieties; in fact, one may ask this question for any subring of a polynomial ring, so that one has a well-defined notion of the number of variables an element involves.

We do not believe that the following two results are new, but we have not seen the exact statement of Theorem \ref{thm-r+1} in the literature.

\begin{lem}\label{lem-sparsevectors} Let $\kk$ be a field, $n > m$ and set $T_m=\{I \subseteq [n] \ | \ |I|=m\}$. Let $U=\{v_I \ | \ I \in T_m\}$ be a collection of nonzero vectors in $\kk^n$ such that the projection of $v_I$ onto the coordinate subspace $\kk^I$ is zero. Then $\dim(\spn(U))\gs m+1$.
\end{lem}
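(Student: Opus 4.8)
The plan is to argue by contradiction: suppose $\dim(\kk\langle v_I \mid I\subseteq[n],\ |I|=m\rangle) \ls m$, and derive a contradiction with the hypothesis that each $v_I$ is nonzero but vanishes on the coordinate subspace indexed by $I$. Let $U = \kk\langle v_I\rangle$ and set $k = \dim U \ls m$. First I would choose a subset $S\subseteq [n]$ with $|S| = k \ls m$ such that the coordinate projection $\kk^n \to \kk^S$ restricts to an injection (equivalently an isomorphism) on $U$; such an $S$ exists because $U$ has dimension $k$, so some $k$ of the $n$ coordinate functionals are linearly independent on $U$. Since $k \ls m$ and $n > m \gs k$, I can enlarge $S$ to a subset $I_0$ with $|I_0| = m$ and $S \subseteq I_0$ (using $n > m$ to have enough coordinates available).

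Now consider the vector $v_{I_0}$. By hypothesis its projection onto $\kk^{I_0}$ is zero, and in particular its projection onto $\kk^{S}$ is zero, since $S\subseteq I_0$. But $v_{I_0}\in U$, and the projection $\kk^n\to\kk^S$ was chosen to be injective on $U$. Hence $v_{I_0} = 0$, contradicting the assumption that every $v_I$ is nonzero. Therefore $\dim U \gs m+1$, as claimed.

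The only slightly delicate point — and the step I expect to need the most care — is the very first one: producing a coordinate subset $S$ of size exactly $k$ on which $U$ injects. This is a standard fact of linear algebra (a $k$-dimensional subspace of $\kk^n$ has a nonvanishing $k\times k$ minor in any spanning matrix, equivalently some $k$ of the coordinate functionals restrict to a basis of $U^\vee$), but it is worth stating cleanly since the whole argument hinges on it. Everything else is bookkeeping with the inequalities $k \ls m < n$ to guarantee that $S$ can be padded up to a size-$m$ index set $I_0$ inside $[n]$. Note the argument genuinely uses $n > m$: if $n = m$ there is only one admissible $I$, namely $[n]$ itself, and a single nonzero vector already has one-dimensional span but the conclusion would demand dimension $\gs m+1 = n+1$, which is impossible — so the strict inequality in the hypothesis is essential and is exactly what lets us enlarge $S$ while keeping room for at least one more coordinate.
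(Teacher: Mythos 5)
Your argument is correct, but it takes a different route from the paper. You argue by contradiction: if the span $U$ of the vectors has dimension $k\ls m$, then some $k$ coordinate functionals are injective on $U$ (a nonvanishing $k\times k$ minor of a spanning matrix), you pad that coordinate set $S$ to a set $I_0$ of size $m$, and then $v_{I_0}\in U$ vanishes on $S\subseteq I_0$, forcing $v_{I_0}=0$, a contradiction. The paper instead proceeds by induction on $m$: the base case $m=0$ is a single nonzero vector; in the inductive step one picks $v_{[m]}$ (after relabelling so its first coordinate is nonzero), applies the inductive hypothesis to the vectors $v_I$ with $1\in I$ after deleting the first coordinate to get $m$ linearly independent vectors with vanishing first coordinate, and adjoins $v_{[m]}$. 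Both are elementary; your direct argument avoids the relabelling and bookkeeping of the induction and isolates the one nontrivial linear-algebra fact (injectivity of $U$ onto some $k$ coordinates), while the paper's induction has the mild benefit of explicitly exhibiting $m+1$ linearly independent vectors among the $v_I$. One small quibble with your closing remark: when $n=m$ the hypothesis itself is unsatisfiable (the unique $v_{[n]}$ would be a nonzero vector all of whose coordinates vanish), so the statement would be vacuously true rather than false; the assumption $n>m$ is what makes the lemma have content and is what your padding step uses, but its failure does not produce a counterexample.
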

\begin{proof} We proceed by induction on $m$. For the case $m=0$, $T_0=\{ \emptyset \}$ and $v_{\emptyset}$ is a nonzero vector by hypothesis; the projection onto $\kk^{\emptyset}\cong \kk^0$ is zero for all vectors. For the inductive step, assume without loss of generality that the first coordinate of $v_{[m]}$ is nonzero. By the inductive hypothesis, there are $n$ linearly independent vectors $\{w_1, \dots, w_m\}$ in $\{v_I \ | \ I \subseteq [n], |I|=m, 1\in I\}$, since omitting the first coordinate produces a set of vectors satisfying the statement of the lemma. As the vectors $\{w_1, \dots, w_m\}$ all have first coordinate zero, the set of vectors $\{w_1, \dots, w_m, v_{[m]}\}$ is linearly independent.
\end{proof}

\begin{thm} Let $A$ be a surjective $r \times n$ matrix with $n > r$. Then the lattice $\ker_\ZZ(A)\subset \ZZ^n$ is generated by elements with at most $r+1$ nonzero entries. \label{thm-r+1}
\end{thm}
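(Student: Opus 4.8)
The plan is to produce, for the lattice $L = \ker_\ZZ(A)$, a generating set whose members each have at most $r+1$ nonzero coordinates, by a descending induction that peels off one coordinate at a time. Concretely, I would argue by induction on $n$. The base case $n = r+1$ is immediate since $L$ has rank $n - r = 1$ and every element has at most $n = r+1$ entries. For the inductive step, suppose $n > r+1$ and consider the projection $\rho\colon \ZZ^n \to \ZZ^{n-1}$ forgetting the last coordinate. Let $A'$ be the $r \times (n-1)$ matrix obtained by deleting the last column of $A$; after reordering columns we may assume $A'$ still has rank $r$ (this uses $n-1 > r$, so some $r$ of the first $n-1$ columns are independent). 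By induction, $\ker_\ZZ(A')$ is generated by vectors with at most $r+1$ nonzero entries; each such vector lifts to an element of $L$ by appending a last coordinate $0$, and these lifts still have at most $r+1$ nonzero entries. It remains to show that these lifts, together with \emph{one} further element of $L$ that involves the last coordinate, generate all of $L$.

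The second ingredient is that the sublattice $L_0 := \{\beta \in L \mid \beta_n = 0\} = \ker_\ZZ(A')$ (lifted) has index in $L$ equal to the index of $\rho(L_n)$-type data; more precisely, $L / L_0$ embeds into $\ZZ$ via $\beta \mapsto \beta_n$, so $L/L_0 \cong d\ZZ$ for some $d \ge 0$, and when $d > 0$ we may pick $\beta^{(n)} \in L$ with $\beta^{(n)}_n = d$ minimal positive. Then $L$ is generated by $L_0$ together with $\beta^{(n)}$. The catch is that $\beta^{(n)}$ itself need not be sparse — a priori it can have up to $n$ nonzero entries. This is exactly where the hypothesis that $A$ is surjective (full rank $r$) must be exploited, and it is the main obstacle: we need to replace $\beta^{(n)}$ by a vector in the same coset modulo $L_0$ that has at most $r+1$ nonzero entries, or else add it and then re-sparsify.

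To handle the obstacle I would argue directly with $A$ rather than by induction on the witness vector. Given any $\gamma \in L$ with $\gamma_n \neq 0$, consider its support $S = \supp(\gamma)$ with $n \in S$. If $|S| \le r+1$ we are done, so suppose $|S| \ge r+2$. Then the columns $\{m_i : i \in S\}$ of $A$ number at least $r+2$ and lie in the $r$-dimensional space $\RR^r$, hence satisfy at least two independent linear relations; equivalently $\ker_\ZZ(A_S)$ (integer relations supported on $S$) has rank $\ge 2$. Pick a nonzero $\delta \in \ker_\ZZ(A_S)$ with $\delta_n = 0$ (possible since the rank-$\ge 2$ lattice has a full-rank sublattice killing one fixed coordinate, as $|S| - 1 \ge r+1 > r$): such $\delta$ lies in $L_0$. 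By subtracting a suitable integer multiple of $\delta$ from $\gamma$ we can strictly decrease $|\supp(\gamma)|$ — choose a coordinate $j \in \supp(\delta) \setminus \{n\}$ with, say, $|\delta_j|$ minimal, and... actually the clean move is: among all elements of $\gamma + L_0$, choose one, call it $\gamma'$, of minimal support size; I claim $|\supp(\gamma')| \le r+1$. If not, the above produces $\delta \in L_0$ supported inside $\supp(\gamma')$ with $\delta_n = 0$; since $\supp(\delta) \subsetneq \supp(\gamma')$ would already contradict minimality once we subtract to kill a coordinate, and since we can always arrange $\supp(\delta)\subseteq\supp(\gamma')$, running the Euclidean-type reduction on a coordinate of $\supp(\gamma')\setminus\{n\}$ where $\delta$ is nonzero lets us cancel an entry of $\gamma'$ without creating a new one, contradicting minimality. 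Replacing $\beta^{(n)}$ by this minimal-support representative $\gamma'$ of its coset gives a sparse generator, and together with the inductively-sparse generators of $L_0$ we obtain the desired generating set of $L$. I would also double-check the edge case $d = 0$ (i.e. every element of $L$ has last coordinate $0$), where $L = L_0$ and there is nothing to add; and note the whole argument is a lattice-theoretic statement independent of $\kk$, consistent with the paper only using $A \in \mathrm{Mat}_{r\times n}(\ZZ)$ of full rank.
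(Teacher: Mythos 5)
Your overall architecture — peel off the last coordinate, generate $\ker_\ZZ(A')$ sparsely by induction, and add one sparse representative of the coset of minimal positive last coordinate $d$ — does not work, because that coset need not contain any element of support at most $r+1$. Concretely, take $r=1$, $n=3$, $A=(4,6,1)$: this is surjective, $A'=(4,6)$ has full rank (so your setup applies with no reordering), $L_0=\ZZ\,(3,-2,0)$ and $d=2$, but the elements of $L=\ker_\ZZ(A)$ with last coordinate $2$ are exactly $(1+3k,\,-1-2k,\,2)$ for $k\in\ZZ$, all of which have full support $3>r+1$. The step of your argument that breaks is the ``Euclidean-type reduction'': with $\gamma'=(1,-1,2)$ the only vectors of $L_0$ supported inside $\supp(\gamma')$ are multiples of $\delta=(3,-2,0)$, and no integer multiple of $\delta$ cancels an entry of $\gamma'$, since $3\nmid 1$ and $2\nmid 1$ — adding multiples of $\delta$ only reduces an entry modulo $\delta_j$, not to zero, so the ``minimal-support representative'' of the coset can still have full support. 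The theorem is of course true in this example: $L$ is generated by the sparse vectors $(1,0,-4)$ and $(0,1,-6)$; but their last coordinates are $-4$ and $-6$, and generation succeeds only because $\gcd(4,6)=2=d$. This divisibility phenomenon is invisible to a single coset $\beta^{(n)}+L_0$. Nor can the strategy be saved by choosing which coordinate to peel more cleverly: for $A=(4,6,25)$ one checks that for every choice of peeled coordinate, the coset of minimal positive value in that coordinate contains no vector of support at most $2$.

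This divisibility issue is precisely what the paper's proof is designed to handle: it considers the kernels $K_I$ of all $(r+1)$-column submatrices and shows that their integer span agrees with $K=\ker_\ZZ(A)$ after tensoring with $\QQ$ and with $\FF_p$ for every prime $p$ (using Lemma~\ref{lem-sparsevectors} together with the saturation observation that $pv\in K_I$ forces $v\in K_I$); spanning over every residue field is what forces generation over $\ZZ$, with no need to exhibit a sparse element in any particular coset. If you want to retain an induction on $n$, the statement you would actually have to prove is that the last coordinates of the sparse elements of $L$ already generate $d\ZZ$ — not that some sparse element has last coordinate exactly $d$ — and that is essentially the full content of the theorem, so the induction buys nothing as it stands. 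A secondary, fixable point: $\ZZ$-surjectivity is not inherited by $A'$ (for $(6,10,15)$ no column can be deleted while keeping the $1\times 1$ minors coprime), so your inductive hypothesis should be phrased for full-rank matrices; this is harmless since $\ker_\ZZ(A)$ depends only on the rational row space, but it needs to be said.
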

\begin{proof} For a subset $I\subseteq [n], |I|=r+1$, let $A_I$ be the matrix obtained from $A$ by taking only the columns whose indices lie in $I$. Let $K_I\subseteq \ZZ^I \subseteq \ZZ^n$ be the kernel of $A_I$, and $K$ be the kernel of $A$. By definition, $\ZZ \langle K_I \rangle \subseteq K$; we will show that these lattices agree upon tensoring with $\QQ$ and $\FF_p$ for each prime $p$. Note that for each such $I$, $K_I$ contains a nonzero vector. Moreover, $K_I$ contains a vector that is nonzero mod $p$ for each prime $p$: if $p v_I \in K_I$, then $v_I\in K_I$ as well. Thus, for any $\kk=\QQ, \FF_p$, we have $\ZZ \langle K_I \rangle \otimes \kk \subseteq K\otimes\kk$, and $\ZZ \langle K_I \rangle \otimes \kk$  satisfies the hypotheses of Lemma~\ref{lem-sparsevectors} with $m=n-(r+1)$, so that its dimension is at least $n-r$. As the sequence
\[ 0 \rightarrow K \rightarrow \ZZ^n \stackrel{A}{\rightarrow} \ZZ^r \rightarrow 0 \]
is split, the dimension of $K \otimes \kk$ is $n-r$. Thus, $\ZZ \langle K_I \rangle \otimes \kk= K\otimes \kk$ for all such $\kk$, so $\ZZ \langle K_I \rangle = K$ as required.
\end{proof}

\begin{cor}\label{cor-r+1}
 Let $V$ be a $n$-dimensional representation of a torus $T$ of rank $r\ls n$. The rational invariants $\kk(V)^T$ are generated by rational invariants each involving at most $r+1$ variables.
\end{cor}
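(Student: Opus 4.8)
The plan is to deduce Corollary~\ref{cor-r+1} directly from Theorem~\ref{thm-r+1}. Recall from Section~\ref{section-setup} that $\kk(V)^T = \spn_\kk\{x^\beta \mid \beta \in \ker_\ZZ A\}$, and that a monomial $x^\beta$ with $\beta \in \ZZ^n$ involves exactly the variables indexed by $\supp(\beta)$. So the number of variables appearing in $x^\beta$ is precisely $|\supp(\beta)| = $ the number of nonzero entries of $\beta$.

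First I would observe that $\kk(V)^T$ is generated as a field (over $\kk$) by the set of Laurent monomials $\{x^\beta \mid \beta \in \ker_\ZZ A\}$, and that multiplication of monomials corresponds to addition of exponent vectors: $x^\beta \cdot x^{\beta'} = x^{\beta + \beta'}$ and $(x^\beta)^{-1} = x^{-\beta}$. Hence if $B = \{\beta^{(1)}, \dots, \beta^{(k)}\}$ is a generating set for the lattice $\ker_\ZZ A$ as a $\ZZ$-module, then every $x^\beta$ with $\beta \in \ker_\ZZ A$ is a Laurent monomial in the $x^{\beta^{(j)}}$, so the finite set $\{x^{\beta^{(1)}}, \dots, x^{\beta^{(k)}}\}$ generates $\kk(V)^T$ as a field over $\kk$.

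Then I would invoke Theorem~\ref{thm-r+1}: since $A$ has full rank $r \le n$, the lattice $\ker_\ZZ A$ is generated by elements with at most $r+1$ nonzero entries (the case $n = r$ being trivial since then $\ker_\ZZ A = \{0\}$ and there are no nonconstant rational invariants, so the statement holds vacuously). Taking $B$ to be such a generating set, each generator $x^{\beta^{(j)}}$ involves at most $r+1$ variables, which is exactly the claim.

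There is essentially no obstacle here — the only thing to be careful about is the bookkeeping translating "lattice generators with $\le r+1$ nonzero entries" into "field generators involving $\le r+1$ variables," and noting that forming products, inverses, and $\kk$-linear combinations of monomials supported on a set $S$ of at most $r+1$ indices only ever produces rational functions in those same $\le r+1$ variables, so no new variables are introduced. One should also note the degenerate case $r = n$ separately, as mentioned above.
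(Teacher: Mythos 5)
Your proof is correct and is precisely the argument the paper intends: the corollary is stated immediately after Theorem~\ref{thm-r+1} with no separate proof, the translation from lattice generators of $\ker_\ZZ A$ with at most $r+1$ nonzero entries to field generators of $\kk(V)^T$ involving at most $r+1$ variables being exactly the routine bookkeeping you describe. Your explicit handling of the degenerate case $r=n$ is a small point the paper glosses over, but otherwise there is nothing to add.
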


\begin{thm}\label{thm-2r+1}
  Let $V$ be a $n$-dimensional representation of a torus $T$ of rank $r\ls n$. The invariants involving at most $2r+1$ variables form a separating set.
\end{thm}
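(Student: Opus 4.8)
The plan is to reduce the statement to the combinatorial characterization of separating monomial subalgebras established in Proposition~\ref{prop-sepalg} (in characteristic zero) and Proposition~\ref{prop-SsepCharp} (in positive characteristic), applied to a carefully chosen subsemigroup $\Ss\subseteq \Ll$. First I would let $\Ss$ be the subsemigroup of $\Ll=\ker_\ZZ A\cap\NN^n$ generated by all $\alpha\in\Ll$ with $|\supp(\alpha)|\ls 2r+1$; equivalently, $\Ss=\Ll\cap\spn_{\NN}\{\alpha\in\Ll : |\supp\alpha|\ls 2r+1\}$. The monomials $x^\alpha$ with $\alpha$ in the generating set are exactly invariants involving at most $2r+1$ variables, so it suffices to show $\Ss$ gives a separating algebra, and then (by passing to a finite set of semigroup generators, all still of small support) extract a finite separating \emph{set} rather than a subalgebra.

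The key step is to verify the two conditions in Proposition~\ref{prop-sepalg}\ref{prop-sepalg2}: (a) for every $I\subseteq[n]$, $\Ll_I\subseteq\ZZ\Ss_I$, and (b) for every $\alpha\in\Ll$ and every $i\in\supp\alpha$, there is $\gamma\in\Ss$ with $i\in\supp\gamma\subseteq\supp\alpha$. Condition (a) is where Corollary~\ref{cor-r+1} (equivalently Theorem~\ref{thm-r+1}) does the work: restricting to the coordinate subspace $V_I$ gives a torus action whose rank is some $r_I\ls r$, and its lattice of relations $\ker_\ZZ A_I$ is generated by vectors with at most $r_I+1\ls r+1\ls 2r+1$ nonzero entries, all of which lie in $\Ss_I$ up to sign; hence $\ZZ\Ss_I\supseteq \ker_\ZZ A_I\supseteq\Ll_I$. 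For condition (b), given $\alpha\in\Ll$ with $i\in\supp\alpha$, set $J=\supp\alpha$; then $\alpha\in\ker_\ZZ A_J$, which by Theorem~\ref{thm-r+1} is generated by elements $\gamma^{(1)},\dots,\gamma^{(k)}\in\ZZ^J$ each with at most $r+1$ nonzero entries. Writing $\alpha=\sum c_j\gamma^{(j)}$, some $\gamma^{(j)}$ has $i$ in its support; replacing $\gamma^{(j)}$ by $-\gamma^{(j)}$ if needed and then adding suitable nonnegative combinations of the other generators (or better, decomposing $\gamma^{(j)}$ into its positive and negative parts, each supported in $J$ and of size $\ls r+1$, hence an element of $\Ll$ of support $\ls 2r+1$ once we note $\gamma^{(j)}=\gamma^{(j)+}-\gamma^{(j)-}$ with both parts in $\Ll$)\,---\,this is exactly the place requiring care\,---\,we produce $\gamma\in\Ss$ with $i\in\supp\gamma\subseteq J=\supp\alpha$, noting $|\supp\gamma|\ls 2r+1$ since both $\gamma^{(j)+}$ and $\gamma^{(j)-}$ have support of size $\ls r+1$ but the two supports are disjoint, so we instead take $\gamma$ to be whichever of $\gamma^{(j)+},\gamma^{(j)-}$ contains $i$, which has support size $\ls r+1\ls 2r+1$; its sum with a fixed small-support element of $\Ll$ supported in $J$ keeps us inside $\Ss$.

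In positive characteristic one argues directly with Proposition~\ref{prop-SsepCharp}: conditions (a) and (b) above hold over $\ZZ$ and are characteristic-free, so Lemma~\ref{lem-SSep} already shows $\Ss$ gives a separating algebra over $\FF_p$; alternatively, since (a) gives $\ZZ\Ss=\ZZ\Ll$, one checks $p^m\Ll\subseteq\Ss$ for suitable $m$ using that $\Ll/(\text{subsemigroup generated by small-support elements})$ is a finite-index phenomenon. Finally, to convert the separating subalgebra into a separating \emph{set}: the subsemigroup $\Ss$ is finitely generated (it is a subsemigroup of the finitely generated $\Ll$ cut out by a support condition, hence Noetherian), and we may take its finite generating set $G$; each generator has support $\ls 2r+1$, and $\{x^\gamma : \gamma\in G\}$ is a finite separating set of invariants each involving at most $2r+1$ variables, as required.

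The main obstacle I expect is bookkeeping in condition (b): ensuring that the small-support generator witnessing $i\in\supp\gamma\subseteq\supp\alpha$ can be chosen to actually lie in the \emph{semigroup} $\Ss$ (i.e. with nonnegative entries) rather than merely in the lattice $\ker_\ZZ A_{\supp\alpha}$. The clean fix is to observe that $\alpha$ itself, being in $\NN^n$, certifies that $\supp\alpha$ supports a strictly positive element of $\ker_\ZZ A$, so any lattice generator $\gamma^{(j)}$ of $\ker_\ZZ A_{\supp\alpha}$ can be made nonnegative by adding a large enough multiple of $\alpha$; this addition only enlarges the support to within $\supp\alpha$ and only increases $|\supp|$ up to $|\supp\alpha|$, but we must then re-examine whether the resulting element still counts as ``small support.'' This is resolved by instead working with the positive and negative parts $\gamma^{(j)\pm}$, which are automatically in $\Ll$ with disjoint supports each of size $\ls r+1$, and it is these that furnish the required $\gamma$; getting this packaging exactly right, and handling the degenerate cases where $\supp\alpha$ is itself small, is the delicate part of the argument.
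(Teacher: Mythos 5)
Your overall strategy matches the paper's: take the subsemigroup $\Ss\subseteq\Ll$ generated by elements of support at most $2r+1$, and verify the two conditions of Lemma~\ref{lem-SSep} (which is characteristic-free), using Theorem~\ref{thm-r+1}/Corollary~\ref{cor-r+1} to control supports. However, there is a genuine gap at the step you yourself flag as delicate, and your proposed fix does not work. You repeatedly assert that for a lattice generator $\gamma^{(j)}\in\ker_\ZZ A_{I}$ of small support, the positive and negative parts $\gamma^{(j)+},\gamma^{(j)-}$ are ``automatically in $\Ll$''; this is false, since $\gamma^{(j)+}$ and $\gamma^{(j)-}$ individually are not in $\ker_\ZZ A$ (only their difference is), so they do not correspond to invariant monomials and are not elements of $\Ss$. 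The same error underlies your verification of condition (a): the small-support lattice generators of $\ker_\ZZ A_I$ do not ``lie in $\Ss_I$ up to sign,'' so the inclusion $\ZZ\Ss_I\supseteq\ker_\ZZ A_I$ does not follow from Theorem~\ref{thm-r+1} alone. Likewise, for condition (b), adding ``a fixed small-support element of $\Ll$ supported in $J$'' to $\gamma^{(j)+}$ neither lands you in $\ker_\ZZ A$ nor is such an element known to exist -- its existence is exactly what condition (b) asks you to prove. Adding a large multiple of $\alpha$ does make things nonnegative but, as you note, destroys the support bound when $|\supp\alpha|>2r+1$, and you offer no repair beyond the false $\gamma^{\pm}\in\Ll$ claim.

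What is missing is the convex-geometric input the paper uses, namely Carath\'eodory's theorem applied to the weights, twice. For condition (b): from $\sum_i\alpha_i m_i=0$ one sees that $-\alpha_{i_0}m_{i_0}$ lies in the convex hull of $\{m_i : i\in\supp\alpha\setminus\{i_0\}\}$, and Carath\'eodory gives a subset $K$ of size at most $r+1$ carrying such a relation; clearing denominators produces a genuinely nonnegative element $\gamma\in\Ll$ with $i_0\in\supp\gamma\subseteq\supp\alpha$ and $|\supp\gamma|\le r+2\le 2r+1$. For condition (a): after reducing via Corollary~\ref{cor-r+1} to a lattice element $\beta=\beta^+-\beta^-$ of support at most $r+1$, one uses that $0$ is an \emph{interior} point of the convex hull of the weights on $\supp\alpha$ (because $\alpha$ is a strictly positive relation there) together with Carath\'eodory to build an auxiliary $\gamma\in\Ll$ whose support contains $\supp\beta^-\cup\{j_0\}$ and has size at most $2r+1$; then a large multiple of $\gamma$ dominates $\beta^-$, so $\beta+N\gamma\in\Ll$ still has support of size at most $(r+1)+r=2r+1$, and $\beta=(\beta+N\gamma)-N\gamma$ exhibits $\beta\in\ZZ\Ss$. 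Without this construction of honest nonnegative small-support relations, your argument does not close.
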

\begin{proof}
Let $\Ss\subseteq \Ll$ be the subsemigroup generated by all elements with support of size at most $2r+1$. We will show that $S$ satisfies the conditions of Lemma~\ref{lem-SSep}.

First we show that for all $\alpha\in \Ll$ with $\alpha_{i}\neq 0$, there exist $\gamma\in \Ss$ such that  $i\in\supp (\gamma)$ and $\supp (\gamma) \subseteq \supp (\alpha)$.
 Take $\alpha\in \Ll$, and suppose $\alpha_{i_0}\neq 0$. If ${|\supp (\alpha)  | \, \ls 2r+1}$, then ${\alpha \in \Ss}$, so there is nothing to do. So we suppose that ${|\supp (\alpha) | \, > 2r+1}$. We can rewrite the equation $0=\sum_{i=1}^n\alpha_i m_i$ as
 \[\frac{-\alpha_{i_0}}{\sum_{i\in\supp (\alpha)\setminus\{i_0\}}\alpha_i}m_{i_0}=\sum_{i\in\supp (\alpha)\setminus\{i_0\}} \left(\frac{\alpha_i}{\sum_{i\in\supp (\alpha)\setminus\{i_0\}}\alpha_i}\right) m_i\,,\]
 and so $(\nicefrac{-\alpha_{i_0}}{\sum_{i\in\supp (\alpha)\setminus\{i_0\}}\alpha_i})m_{i_0}$ is in the convex hull of $\{m_i\mid i\in \supp (\alpha)\setminus\{i_0\}\}$. By Carath\'eodory's Theorem \cite{cc:uvkpgwna} there is a subset $K\subseteq \supp (\alpha)\setminus\{i_0\}$ of size $|K| \ls r+1$ such that $(\nicefrac{-\alpha_{i_0}}{\sum_{i\in\supp (\alpha)\setminus\{i_0\}}\alpha_i})m_{i_0}$ is in the convex hull of $\{m_k \mid k\in K\}$. Hence, we have an equation
\[\frac{-\alpha_{i_0}}{\sum_{i\in\supp (\alpha)\setminus\{i_0\}}\alpha_i}m_{i_0}=\sum_{k\in K} \delta_k m_k\,,\]
where $\delta_k\gs 0$ and $\sum_{k\in K}\delta_k = 1$.  Multiplying by a sufficiently large natural number, we find $\sum_{i\in K\cup \{i_0\}}\gamma_im_i=0$ with $\gamma_i\in\NN$. Define $\gamma\in \NN^n$ as follows:
 \[
  \gamma_i=\left \{ \begin{array}{cl}
                    \gamma_{i_0} & \mathrm{ if }~ i=i_0\\
                    \gamma_i     & \mathrm{ if }~ i\in K\\
                    0           & \mathrm{ otherwise}
                   \end{array}\right.\]
Then $\gamma_{i_0}\neq 0$ and $\gamma$ has support $K\cup \{i_0\}\subseteq \supp (\alpha)$ of size at most $r+2\ls 2r+1$ so that $\gamma \in \Ss$ as required.

Now we show that for all $I\subseteq [n]$ we have $\Ll_I\subseteq \ZZ\Ss_I$. 
Fix $I\subseteq [n]$. If $\Ll_I=0$, we are done, so suppose $\Ll_I\neq 0$. Take $\alpha\in \Ll_I$. Set $I'=\supp (\alpha)$. By Corollary~\ref{cor-r+1}, we can write $\alpha$ as a $\ZZ$-linear combination of elements of $\ker_\ZZ A_{I'}$ with support of size at most $r+1$. It will suffice to show that any $\beta\in \ker_\ZZ A_{I'}$ with $|\supp \beta|\ls r+1$ can be written $\beta=\gamma-\gamma'$ with $\gamma,\gamma'\in\Ll_{I'}$ having support of size at most $2r+1$.

Take $\beta=\beta^+-\beta^-\in\ker_\ZZ A_{I'}$ with $\beta^+,\beta^-\in\NN^n$ with disjoint support and $|\supp (\beta)| \ls r+1$. Set $J^+=\supp (\beta^+)$ and $J^-=\supp (\beta^-)$. Note that we have ${|J^+| + |J^-|\ls r+1}$ and without loss of generality, both $J^+$ and $J^-$ are nonempty, and so ${\max{\{|J^+|,|J^-|\}}\ls r}$. As $\alpha$ has full  support $I'$, $0$ is an interior point of the convex hull of the weight vectors $\{m_i \, | \, i \in I'\}$, that is, there exists an equation of the form
\begin{equation}\sum_{i\in J^-} \lambda_i m_i + \sum_{j \notin J^-} \lambda_j m_j = 0\,,\label{eqn-invfullsupp}\end{equation} 
with $\lambda_i>0$ and $\sum_{i\in I'}\lambda_i=1$.

Set 
\[m'=-\left(\frac{1}{\sum_{j\notin J^-}\lambda_j}\right)\left(\sum_{i\in J^-} \lambda_i m_i \right) \quad \text{and} \quad \lambda_i'=\left(\frac{1}{\sum_{j\notin J^-}\lambda_j}\right)\lambda_i\,.\]
Then Equation~(\ref{eqn-invfullsupp}) can be rewritten as
\[m'=\sum_{i\notin J^-}\lambda_i'm_i\,.\]
Note that $\lambda_i'>0$ and $\sum_{i\notin J^-}\lambda_i'=1$, so that $m'$ is an interior point of the convex hull of $\{ m_i \, | \, i \notin J^-\}$. As $J^+$ is nonempty and disjoint from $J^-$, there exist $j_0\in J^+\setminus J^-$. By Watson's Carath\'eodory Theorem \cite{dw:artkc}, there exists a subset $K\subseteq I'$  with $|K|\ls r$ and $K \cap J^- = \emptyset$ such that $m'$ is in the convex hull of $\{m_{j_0}\} \cup \{ m_k \, | \, k \in K\}$, that is, there are nonnegative rational numbers $\mu_{j_0}, \mu_k$ such that $\sum_{k\in K\cup\{j_0\}}\mu_k=1$ and
\begin{equation} m'=\mu_{j_0}m_{j_0} + \sum_{k\in K} \mu_{k} m_{k}\,.\label{eqn-smallsupp}\end{equation} 
Substituting $m'$ for its value and reorganizing, we then get an equation
\[\sum_{i\in J^-}\left(\frac{\lambda_i}{\sum_{j\notin J^-}\lambda_j}\right)m_i  + \mu_{j_0} m_{j_0} + \sum_{k\in K}\mu_km_k=0\,.\]
It follows that there is $\gamma\in\Ll_I$ with support $J^-\cup \{j_0\}\subseteq \supp (\gamma) \subseteq J^-\cup\{j_0\}\cup K$. Note that $|\supp (\gamma) | \ls |J^-|+1+|K|\ls r+r+1=2r+1$. We may assume $\gamma-\beta^-\in\NN^n$, multiplying $\gamma$ by a large natural number if needed, and so $\beta+\gamma$ has support 
\[\supp(\beta+\gamma)\subseteq \supp (\beta) \cup \supp( \gamma) \subseteq J\cup (J^-\cup\{j_0\}\cup K)=J\cup K\,.\]
It follows that $|\supp (\beta+\gamma)|  \ls |J|+|K|\ls r+1+r=2r+1.$ Thus, we can write $\beta=(\beta+\gamma)-\gamma$ as a difference of elements of $\Ll_I$, with support of size at most $2r+1$.
\end{proof}


\begin{eg}[The bound given by Theorem \ref{thm-2r+1} is sharp]
 We consider the $(2r+1)$-dimensional representation of the torus of rank $r$ given by the matrix of weights:
 \[A:=\left(\begin{array}{c|c|c}
             I & \begin{array}{c}
             5\\
             \vdots\\
             5
                  \end{array} & -6I
            \end{array}\right).\]
As $A$ is already in reduced echelon form,
\[\ker_\ZZ A=\Big\langle v_0=\Big(-5\sum_{j=1}^r e_j, 1, 0\Big), v_i=\big(6e_i,0,e_i\big) \ \big|  \ i\in [r] \Big\rangle\,,\]
as a $\ZZ$-module and so $\Ll=\ker_\ZZ A\cap \NN^n$ is generated as a semigroup by 
\begin{align*}
\bigg\{v_0+\sum_{j=1}^r v_j =\Big(\sum_{j=1}^re_j&,1,\sum_{j=1}^r e_j \Big),  ~7v_0+6\sum_{j=1}^r v_j=\Big(\sum_{j=1}^re_j,7,6\sum_{j=1}^r e_j \Big)\,, \\
 6v_0&+5\sum_{j=1}^r v_j=\Big(0,6,5\sum_{j=1}^r e_j \Big)\,, v_i=(6e_i,0,e_i) \ \Big|  \ i\in [r] \bigg\}\,.
\end{align*}
Indeed, an arbitrary element of $\Ll$ will be of the form
\[\alpha=a_0\Big(-5\sum_{j=1}^r e_j, 1, 0\Big)+\sum_{i=1}^r a_i(6e_i,0,e_i)=\Big(\sum_{j=1}^r(-5a_0+6a_j)e_j, a_0, \sum_{j=1}^r a_je_j \Big)\,,\]
where $a_i\gs 0$ for each $i=0,\ldots,r$ and $-5a_0+6a_j\gs 0$ for each $j=1,\ldots,r$ since all entries of $\alpha$ must be nonnegative. In particular, for each $i=1,\ldots, r$, we will have 

\begin{align*}
   a_i-6\lceil a_0/6\rceil+\lfloor a_o/6\rfloor\gs 0&,  \text{ if } 6|(a_0-1), \text{ and}\\
    a_i-4\lceil a_0/6\rceil-\lfloor a_o/6\rfloor\gs 0&,  \text{ otherwise.}
  \end{align*}
Then we can write
\begin{align*}\alpha=\left\lfloor \frac{a_0}{6}\right\rfloor\left(6v_0+5\sum_{j=1}^rv_j \right)+ \Big(\left\lceil \frac{a_0}{6}\right\rceil &- \left\lfloor \frac{a_0}{6}\right\rfloor\Big)\left(7v_0+ 6\sum_{j=1}^rv_j\right)\\
&+\sum_{i=1}^r\left(a_i-6\left\lceil \frac{a_0}{6}\right\rceil+\left\lfloor \frac{a_o}{6}\right\rfloor\right)v_i\,,
\end{align*}
if $6|(a_o-1)$, and otherwise
\begin{align*}\alpha=\left\lfloor \frac{a_0}{6}\right\rfloor\left(6v_0+5\sum_{j=1}^rv_j \right)+ \Big(\left\lceil \frac{a_0}{6}\right\rceil &- \left\lfloor \frac{a_0}{6}\right\rfloor\Big)\left(v_0+ \sum_{j=1}^rv_j\right)\\
&+\sum_{i=1}^r\left(a_i-4\left\lceil \frac{a_0}{6}\right\rceil-\left\lfloor \frac{a_o}{6}\right\rfloor\right)v_i\,,
\end{align*}
proving our claim. 

Let $\Ss\subseteq \Ll$ be the subsemigroup generated by all elements of $\Ll$ with support of size strictly less than $2r+1$. Our next claim is that this subsemigroup is generated by
\[\Big\{6v_0+5\sum_{j=1}^rv_j,~v_i \ \big| \ i\in[r] \Big\}\,.\]
Suppose $\gamma\in \Ll$ has support of size strictly less than $2r+1$, that is, it has at least one zero entry. As it belongs to $\Ll$ we can write
\[\gamma=g_1\Big(v_0+\sum_{j=1}^r v_j\Big)+g_2\Big(7v_0+6\sum_{j=1}^r v_j\Big)+g_3\Big(6v_0+5\sum_{j=1}^r v_j\Big)+\sum_{i=1}^r a_iv_i\,,\]
where $g_i,a_j$ are nonnegative integers. Hence, as $\gamma$ is equal to
\[(g_1+g_2+6a_1,\ldots,g_1+g_2+6a_r,g_1+7g_2+6g_3,g_1+6g_2+5g_3+a_1,\ldots, g_1+6g_2+5g_3+a_r)\,,\]
we must have $g_1=g_2=0$ since otherwise $\gamma$ has full support.

Our final claim is that $\Ss$ does not give a separating algebra. Indeed, the first $r+1$ entries of any element of $\Ss$ are divisible by 6, so for any prime $p$ and positive integer $m$, 
\[p^m\Big(v_0+\sum_{j=1}^r v_j\Big)=(p^m,\ldots,p^m, p^m,p^m,\ldots,p^m)\]
does not belong to $\Ss$. Hence by Proposition \ref{prop-sepalg}, $\Ss$ does not give a separating subalgebra.   \done
\end{eg}


\subsection{Minimal Size of monomial separating sets for Segre-Veroneses}

In this section, we study the minimal size of a monomial separating set for the affine cone over a Segre-Veronese variety. We consider the representation of a torus of rank $r$ whose ring of invariants is isomorphic to the ring of homogeneous coordinates on Segre-Veronese variety that is the image of the closed embedding $\prod_{i=1}^r \PP^{n_i-1} \hookrightarrow \PP^N$ given by the line bundle $\mcal{O}(a_1,\dots,a_r)$ as described in Section~\ref{section-SV}. Set $I:=\{i\mid a_i=1 \text{ or} \text{ a} \text{ pure} \text{ power} \text{ of}\allowbreak \chara \kk\}$. As in Section~\ref{section-SV}, the ring of invariants is given by:
\[S=\kk\big[\ x_0 M_1 \cdots M_r \ \big| \ M_i \, \text{is a monomial of degree $a_i$ in the variables} \  x_{i,1}, \dots , x_{i, n_i} \big]\,.\]

\begin{prop}\label{prop-SVminmon}
 \begin{enumerate}
  \item The monomial invariants with support of size at most $r+2$ form a separating set. \label{prop-SVminmon1}
  \item The minimal size of a monomial separating set is \[\Bigg(\prod_{h=1}^r n_h\Bigg) \Bigg(1 + \frac{1}{2}\sum_{i\notin I} ({n_i -1})\Bigg)\,.\] \label{prop-SVminmon2}
 \end{enumerate}
\end{prop}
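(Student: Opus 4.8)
The plan is to work throughout with the affine semigroup presentation $S=\kk[\Ll]$, where $\Ll=\ker_\ZZ A\cap\NN^{n}$, $n=1+\sum_h n_h$, and by the description in Section~\ref{section-SV}
\[\Ll=\Big\{\alpha\in\NN^{n}\ \Big|\ \textstyle\sum_{\ell=1}^{n_h}\alpha_{h\ell}=a_h\alpha_0\ \text{for all }h\Big\},\]
the coordinates being indexed by $0$ and the pairs $(h,\ell)$. For $\mathbf j=(j_1,\dots,j_r)\in\prod_h[n_h]$ set $\delta_{\mathbf j}:=e_0+\sum_h a_h e_{h,j_h}$; these are exactly the elements of $\Ll$ with $\alpha_0=1$ of minimal support size $r+1$, and they span the extreme rays of the cone generated by $\Ll$ in $\RR^n$. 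Finding the minimal size of a monomial separating set is the same as minimizing $\lvert\operatorname{Hilb}(\Ss)\rvert$ over all separating subsemigroups $\Ss\subseteq\Ll$, where $\operatorname{Hilb}(\Ss)$ denotes the (finite, unique) minimal generating set of $\Ss$.

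For part~\ref{prop-SVminmon1} I would verify the two hypotheses of Lemma~\ref{lem-SSep} for $\Ss$ generated by all elements of $\Ll$ of support size $\le r+2$ (this improves on Theorem~\ref{thm-2r+1} in the Segre--Veronese case). The support condition is witnessed already by the $\delta_{\mathbf j}$, which meet every block in a single coordinate and have support $r+1$. For the condition $\Ll_J\subseteq\ZZ\Ss_J$ one may take $J=\supp\alpha$; fixing $s_h\in J_h:=\{\ell:(h,\ell)\in J\}$, the group $\Ll_J$ is generated by $\delta_{\mathbf s}$ together with the vectors $e_{h,\ell}-e_{h,s_h}$, and each of the latter equals $\gamma-\delta_{\mathbf s}$ for the support-$(\le r+2)$ element $\gamma=e_0+e_{h,\ell}+(a_h-1)e_{h,s_h}+\sum_{h'\ne h}a_{h'}e_{h',s_{h'}}\in\Ll$.

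For the lower bound in part~\ref{prop-SVminmon2}, take any separating subsemigroup $\Ss$, so that $p^m\Ll\subseteq\Ss$ for some $m$ (Proposition~\ref{prop-SsepCharp}, or Proposition~\ref{prop-sepalg}\ref{prop-sepalg3} in characteristic zero). The recurring tool is: if $\beta\in\Ll$ has support $F$ and $\beta'\in\operatorname{Hilb}(\Ss\cap\Ll_F)$, then $\beta'\in\operatorname{Hilb}(\Ss)$. First, the elements of $\Ll$ supported on $J(\mathbf j):=\{0\}\cup\{(h,j_h)\}_h$ are exactly the multiples of $\delta_{\mathbf j}$, and $p^m\delta_{\mathbf j}\in\Ss$, so $\Ss$ has a Hilbert basis element supported precisely on $J(\mathbf j)$; this yields $\prod_h n_h$ of them. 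Second, fix $i\notin I$ (so $a_i\ge 2$ is not a power of $p$), a tuple $\mathbf j$, and $k\in[n_i]\setminus\{j_i\}$, and set $J=J(\mathbf j)\cup\{(i,k)\}$; every proper subset of $J$ supporting a nonzero element of $\Ll$ lies in $J(\mathbf j)$ or in $J(\mathbf j')$, where $\mathbf j'$ replaces $j_i$ by $k$, so if $\operatorname{Hilb}(\Ss\cap\Ll_J)$ had no full-support element then $\operatorname{gp}(\Ss\cap\Ll_J)\subseteq\ZZ\delta_{\mathbf j}+\ZZ\delta_{\mathbf j'}$; but $\gamma_1:=e_0+e_{i,j_i}+(a_i-1)e_{i,k}+\sum_{h\ne i}a_h e_{h,j_h}\in\Ll$ has support exactly $J$ with $p^m\gamma_1\in\Ss$, while $p^m\gamma_1=x\delta_{\mathbf j}+y\delta_{\mathbf j'}$ forces $xa_i=p^m$, impossible since $a_i$ is not a power of $p$ (in characteristic zero, first fix such a prime $p$). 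Hence $\Ss$ has a Hilbert basis element of full support $J$; all these supports are distinct from one another and from the $J(\mathbf j)$, and summing $\binom{n_i}2$ over pairs in each block $i$ gives $\lvert\operatorname{Hilb}(\Ss)\rvert\ge\prod_h n_h+\sum_{i\notin I}\big(\prod_{h\ne i}n_h\big)\binom{n_i}2=\big(\prod_h n_h\big)\big(1+\tfrac12\sum_{i\notin I}(n_i-1)\big)$.

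The main obstacle is the matching upper bound. I would let $\Ss_0$ be generated by all $\delta_{\mathbf j}$ together with, for every $i\notin I$, every $\mathbf j'\in\prod_{h\ne i}[n_h]$, and every pair $j<k$ in $[n_i]$, the element $e_0+e_{i,j}+(a_i-1)e_{i,k}+\sum_{h\ne i}a_h e_{h,j'_h}$; then $\lvert\operatorname{Hilb}(\Ss_0)\rvert$ is at most the bound just obtained, so it suffices to prove $\Ss_0$ is separating, i.e.\ that $p^m\Ll\subseteq\Ss_0$ for some $m$ (Proposition~\ref{prop-SsepCharp}; in characteristic zero, that $\Ss_0$ meets the hypotheses of Lemma~\ref{lem-SSep}, via Proposition~\ref{prop-sepalg}\ref{prop-sepalg2}). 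The crux is a face-by-face computation: for each face $F$ of the cone generated by $\Ll$ (one for each family of nonempty $S_h\subseteq[n_h]$), fix $s_h\in S_h$; a difference of two $\delta$'s supported on $F$ gives $a_h(e_{h,\ell}-e_{h,s_h})$, and for $h\notin I$ a difference of $\delta_{\mathbf s}$ and a suitable generator of the second kind with $i=h$ gives $(a_h-1)(e_{h,\ell}-e_{h,s_h})$, so $\gcd(a_h-1,a_h)=1$ recovers $e_{h,\ell}-e_{h,s_h}$ itself, whereas for $h\in I$ (where $a_h=p^{e_h}$) one obtains only $p^{e_h}(e_{h,\ell}-e_{h,s_h})$. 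Thus $\operatorname{gp}(\Ss_0\cap F)$ has $p$-power index in $\operatorname{gp}(\Ll)\cap\spn F$ (index $1$ in characteristic zero, where $I$-blocks have $a_h=1$), and $\Ss_0$ contains every extreme ray. A standard induction on $\dim F$ for affine semigroups then upgrades this to $p^m\Ll\subseteq\Ss_0$ for large $m$: if $x\in\Ll$ lies in the relative interior of $F$, then $p^cx\in\operatorname{gp}(\Ss_0\cap F)\cap F$ once $p^c$ kills all the relevant torsion, and one recurses inside $F$; in characteristic zero the same face computation verifies condition~\ref{prop-sepalg2} of Lemma~\ref{lem-SSep} directly. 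The two bounds together give the stated cardinality.
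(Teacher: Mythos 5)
Your proposal is correct in substance, and in places it takes a genuinely different route from the paper. For Part~(\ref{prop-SVminmon1}) both arguments run through Lemma~\ref{lem-SSep}, but you verify the lattice condition directly (the group of $\Ll_J$ is generated by $\delta_{\mathbf s}$ together with the differences $e_{h,\ell}-e_{h,s_h}$, each realized as a difference of two invariants of support at most $r+2$), whereas the paper routes the same condition through Corollary~\ref{cor-r+1} and its analysis of $(r+2)$-dimensional subrepresentations. For the lower bound the paper argues via reductivity: a monomial separating set must restrict to a separating set on every subrepresentation, and for an $(r+2)$-dimensional subrepresentation with repeated factor $i\notin I$ it exhibits two explicit points $(1,\dots,1)$ and $(\zeta,1,\dots,1)$, with $\zeta$ a suitable root of unity, that the two ``short'' generators $\alpha_1,\alpha_2$ fail to separate, forcing one full-support monomial per such subrepresentation. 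You obtain the same count purely semigroup-theoretically: from $p^m\Ll\subseteq\Ss$ (Propositions~\ref{prop-SsepCharp} and~\ref{prop-sepalg}) you force, for each support $J(\mathbf j)$ and each enlarged support $J$, an indecomposable element of $\Ss$ with exactly that support, the key point being that $a_i\nmid p^m$ when $a_i$ is not a power of $p$; this is a clean divisibility counterpart of the paper's root-of-unity witness, it identifies the minimal monomial separating sets as Hilbert bases, and it treats both characteristics uniformly. For the upper bound the paper is shorter: having Part~(\ref{prop-SVminmon1}), it only needs the chosen monomials to restrict to a separating set on each $(r+2)$-dimensional subrepresentation, which it checks with Lemma~\ref{lem-SSep} (and Proposition~\ref{prop-SsepCharp} when the repeated factor lies in $I$); you instead verify separation for $\Ss_0$ globally by a face-by-face index computation.

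The one thin point is the final saturation step of your upper bound. Knowing $p^{c}x\in\operatorname{gp}(\Ss_0\cap F)\cap F$ only places $p^{c}x$ in the saturation of $\Ss_0\cap F$; ``recursing inside $F$'' does not by itself produce a $p$-power multiple inside $\Ss_0$. You still need the standard conductor-type fact that for a finitely generated subsemigroup of $\NN^n$, every sufficiently large multiple of a group element lying in the relative interior of its cone belongs to the semigroup (here $\operatorname{cone}(\Ss_0\cap F)=F$ since the relevant $\delta_{\mathbf j}$ span the extreme rays of $F$), applied to the finitely many Hilbert basis elements $y_j$ of $\Ll$; since $p^m\big(\sum_j n_jy_j\big)=\sum_j n_j\,p^m y_j$, this yields a uniform $m$ with $p^m\Ll\subseteq\Ss_0$. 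This is routine but should be spelled out. Alternatively, you can bypass the face computation entirely: $\Ss_0$ restricts on each $(r+2)$-dimensional subrepresentation to the paper's $\{\alpha_1,\alpha_2,\alpha_3\}$ (or to $\{\alpha_1,\alpha_2\}$ when the repeated factor lies in $I$), so your own Part~(\ref{prop-SVminmon1}), together with the observation that it then suffices to separate on each $(r+2)$-dimensional subrepresentation, already shows $\Ss_0$ is separating --- which is exactly the paper's route.
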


\begin{proof}

As the torus is a reductive group by \cite[Theorem 2.3.16]{hd-gk:cit}, the restriction of any separating set to a subrepresentation must yield a separating set. As the restriction of an invariant monomial will be either zero or the same monomial, a monomial separating set must contain separating sets for any subrepresentation. 

Let $V$ be a $(r+1)$-dimensional subrepresentation of the action specified in the paragraph above. As any proper subset of the set of weights is linearly independent, a $(r+1)$-dimensional subrepresentation has no nonconstant invariants unless its matrix of weights is of the form
\[A:=\left( \begin{array}{c|c}
           I & \begin{array}{c} -a_1 \\ \vdots\\ -a_r\end{array} 
          \end{array}\right),\]
where $I$ is the $r\times r$ identity matrix. As $A$ is in row reduced echelon form over $\ZZ$, $\ker_\ZZ A$ is generated by $\alpha:=(a_1,\ldots,a_r,1)$ as a $\ZZ$-module. As $\alpha$ has positive entries, it must also generate $\ker_\ZZ A \cap \NN^{r+1}$ as a semigroup. That is, the ring of invariants is generated by exactly 1 monomial, and we get one such monomial for each of the 
$(r+1)$-dimensional subrepresentations with nonconstant invariants, of which there are $\prod_{i=1}^r {n_i}$.

Let $U$ be a $(r+2)$-dimensional subrepresentation of $W'$. By the same argument as before, its set of weights is the full set of weights, but one weight is repeated. For simplicity we suppose the repeated weight is $e_1$ so that the matrix of weights is
\[B:=\left( \begin{array}{c|cc}
           I & \begin{array}{cc} -a_1 &1 \\ -a_2 & 0  \\ \vdots & \vdots \\ -a_r & 0
          \end{array}\end{array}\right).\]
As $B$ is in reduced echelon form, its $\ZZ$-kernel is generated by ${\alpha_1:=(a_1,\ldots,a_r,1,0)}$ and ${\beta:=(-1,0,\ldots,0,0,1)}$ as a $\ZZ$-module. It follows that $\alpha_1$, 
\[{\alpha_2:=\alpha_1+a_1\beta=(0,a_2,\ldots,a_r,1,a_1)}\,,\] and \[\alpha_3:=\alpha_1+(a_1-1)\beta=(1,a_2,\ldots,a_r,1,a_1-1)\] also generate $\ker_\ZZ B$ as a $\ZZ$-module. 

We will use Lemma~\ref{lem-SSep} to show that the $\{\alpha_1,\alpha_2,\alpha_3\}$ give a separating set. To establish the first condition it suffices to note that $\alpha_1$ and $\alpha_2$ give a generator for the ring of invariants of the $(r+1)$-dimensional subrepresentation with support $\{1,\ldots,r+1\}$ and $\{2,\ldots,r+2\}$, respectively. To establish the second condition, we remark that $\alpha' \in \ker_\ZZ B \cap \NN^{r+2}$ will have support $\{1,\ldots,r+1\}$, $\{2,\ldots,r+2\}$ or $[r+2]$, and so we can take $\gamma$ equal to $\alpha_1$, $\alpha_2$, $\alpha_3$, respectively. Note that if $a_1=1$, then $\alpha_3=\alpha_1$ and so our $(r+2)$-dimensional representation does not require any new invariants beyond those needed for the $(r+1)$-dimensional subrepresentations. If $a_1=p^k$, where $p=\chara \kk$, then 
\begin{align*}p^k(1,a_2\ldots,a_r,1,a_1-1)&=a_1(1,a_2\ldots,a_r,1,a_1-1)\\
&=(a_1,\ldots,a_r,1,0)+(a_1-1)(0,a_2,\ldots,a_r,1,a_1)
\end{align*}
 belongs to the semigroup generated by $(a_1,\ldots,a_r,1,0)$ and $(0,a_2,\ldots,a_r,1,a_1)$, and so by Proposition~\ref{prop-SsepCharp} it follows that $(a_1,\ldots,a_r,1,0)$ and $(0,a_2,\ldots,a_r,1,a_1)$ give a separating set. Again, our $(r+2)$-dimensional representation does not require any new invariants. But if $a_1$ is not 1 or a pure power of $\chara\kk$, $\alpha_1$ and $\alpha_2$ do not give a separating set. Indeed, let $\zeta$ be a primitive $m$-th root of unity with $a_1/m$ a pure power of $\chara \kk$ and set $u_1=(1,1,\ldots,1)$ and $u_2=(\zeta,1,\ldots,1)$. Then $\alpha_1(u_1)=\alpha_1(u_2)=\alpha_2(u_1)=\alpha_2(u_1)=1$ but $\alpha_3(u_1)=1\neq\zeta=\alpha_3(u_2)$. Therefore we will require an extra monomial. Although this monomial need not be $\alpha_3$, it will have full support. Therefore  each $r+2$-dimensional subrepresentation with nonconstant invariants and repeated weight $i_0\notin I$ will necessitate at least one extra distinct monomial invariant. There are $\sum_{i_0 \notin I} \binom{n_i}{2} \prod_{i\neq i_0}{n_i}$ different such subrepresentations. It follows the minimal size of a separating set will be at least $\prod_{i=1}^rn_i+\sum_{i_0 \notin I} \binom{n_i}{2} \prod_{i\neq i_0}{n_i}$, which simplifies to the formula in Statement~(\ref{prop-SVminmon2}).

 We will use Lemma~\ref{lem-SSep} to show Statement~(\ref{prop-SVminmon1}). Denote by $\Ss$ the semigroup corresponding to the monomial algebra generated by all monomial invariants depending on at most $r+2$ variables and set $\Ll$ to be the semigroup giving all monomial invariants. The first step is to show that for any subrepresentation $V\subseteq W'$, $\Ll_{\supp(V)}\subseteq \ZZ \Ll_{\supp(V)}$. If $|\supp(V)|\ls r+2$, then the statement is trivially true. Suppose $|\supp(V)|> r+2$. By Corollary \ref{cor-r+1}, it follows that the rational invariants on $V$ are generated by the rational invariants with support of size at most $r+1$, and so in particular by the rational invariants with support of size at most $r+2$. Our argument of the previous paragraph shows that for any $(r+2)$-dimensional subrepresentation, the invariant monomials generate the field of rational invariants, therefore, the invariants monomials with support of size at most $r+2$ generated the rational invariants on $V$ as desired. 
 
 Now take $\alpha=(\alpha_0,\alpha_{1,1},\ldots,\alpha_{1,n_1},\ldots,\alpha_{r,1},\ldots,\alpha_{r,n_r})$ to be the exponent vector of a nonconstant invariant in $\kk[W']^{T'}$. As any proper subset of the set of distinct weights is linearly independent, $\wt(\alpha)=\{m_0,m_1,\ldots,m_r\}$. Hence, if $\alpha_{i_0,j_{i_0}}\neq 0$, we know that $\alpha_{0}\neq 0$ and for all $i\in [r]\setminus \{i_o\}$, there is $j_i\in[n_i]$ such that $\alpha_{i,j_i}\neq 0$. Define $\gamma\in\NN^{1+\sum_{i=1}^r n_i}$ as $\gamma_0=1$, $\gamma_{i_0,j_{i_0}}=a_{i_0}$, $\gamma_{i,j_i}=a_i$ for all other $i\in[n]$, with the remaining entries zero. Then
 \[\gamma_0m_0+\sum_{i=1}^r\sum_{j_i=1}^{n_i}\gamma_{i,j_i}m_{i,j_i}=-\sum_{i=1}^ra_ie_i+\sum_{i=1}^ra_ie_i=0,\]
 and so $\gamma$ gives an invariant. By construction $\gamma_{i_0,j_{i_0}}\neq 0$ and $\gamma$ has support of size $r+2$ contained in the support of $\alpha$. We have now established that $\Ss$ satisfies the two conditions of Lemma \ref{lem-SSep} and so the monomial invariants with support of size at most $r+2$ form a separating set, proving Statement~(\ref{prop-SVminmon1}).
 
A consequence of Statement~(\ref{prop-SVminmon1}) is that to have a monomial separating set for the full representation is suffices to have a set of monomials that restricts to a separating set for each $(r+2)$-dimensional subrepresentation. Our argument in the first two paragraphs of the proof gives a construction for such a separating set which has size equal to the formula in Statement~(\ref{prop-SVminmon2}), completing the proof.
\end{proof}

\begin{cor}\label{cor-sparse} The elements in a separating set must contain at least 
\[\Bigg(\prod_{h=1}^r n_h\Bigg) \Bigg(1 + \frac{1}{2}\sum_{i\notin I} ({n_i -1})\Bigg)\,\]
monomials between them altogether.
\end{cor}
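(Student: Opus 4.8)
The plan is to deduce the bound directly from Proposition~\ref{prop-SVminmon}(\ref{prop-SVminmon2}) by showing that the monomials occurring in an arbitrary separating set already form a monomial separating set. Let $E$ be any separating set for the torus representation whose invariant ring is $S=\kvg$; we may assume $E$ is finite, since otherwise there is nothing to prove. Each $f\in E$ is an invariant, and for a torus action the invariant ring is spanned over $\kk$ by invariant monomials, so $f$ is a $\kk$-linear combination of finitely many invariant monomials. Let $M$ be the union, over all $f\in E$, of the set of monomials appearing in $f$. This $M$ is precisely the collection of monomials ``contained in the elements of $E$ altogether'', so the assertion is the inequality $|M|\ge\left(\prod_{h=1}^r n_h\right)\left(1+\tfrac{1}{2}\sum_{i\notin I}(n_i-1)\right)$.

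First I would record the chain $\kk[E]\subseteq\kk[M]\subseteq\kvg$: the first inclusion holds because every $f\in E$ is a linear combination of elements of $M$, and the second because $M$ consists of invariants. Next I would invoke the characterization recalled in Section~\ref{section-SV} (following \cite[Theorem~2.2]{ed:sifrg}): for reductive $G$, a subset $E\subseteq\kvg$ is a separating set exactly when the morphism $\vg\to\Spec\kk[E]$ induced by inclusion is injective. Since this morphism factors as $\vg\to\Spec\kk[M]\to\Spec\kk[E]$, injectivity of the composite forces injectivity of $\vg\to\Spec\kk[M]$, so $M$ is a monomial separating set. Finally, Proposition~\ref{prop-SVminmon}(\ref{prop-SVminmon2}) says every monomial separating set has at least $\left(\prod_{h=1}^r n_h\right)\left(1+\tfrac{1}{2}\sum_{i\notin I}(n_i-1)\right)$ elements, which gives the claim.

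I do not expect a serious obstacle: the argument is essentially a monotonicity statement (enlarging the generated subalgebra inside $\kvg$ cannot destroy the separating property) combined with the already-established Proposition~\ref{prop-SVminmon}. The one point that needs to be stated carefully is that the monomials appearing in an invariant are themselves invariant -- the standard fact that the invariant ring of a diagonalizable group acting linearly on a polynomial ring is spanned by monomials. This is valid both for the rank-$r$ torus presentation of $S$ used in Section~\ref{section-SV} and for the $G=T\times H$ presentation, since $\kk[W]^G=(\kk[W]^T)^H$ and $H$ acts on the $T$-invariant monomials by scalars.
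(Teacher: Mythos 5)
Your proof is correct and takes essentially the same route as the paper: the monomials occurring in the elements of any separating set form a monomial separating set, and Proposition~\ref{prop-SVminmon}(\ref{prop-SVminmon2}) then gives the lower bound. The only (cosmetic) difference is that you justify the key step via the factorization $\vg\to\Spec\kk[M]\to\Spec\kk[E]$ and the reductive-group injectivity criterion, whereas one can argue directly that if all monomials in $M$ take equal values at two points then so does every element of $E$, so no appeal to reductivity is needed.
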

\begin{proof} As the ring of invariants is generated by monomials, the set of monomials contained in the elements of any separating set must form a monomial separating set. The previous proposition applies to this set.
\end{proof}

\section*{Acknowledgements}

The authors would like to thank Anurag Singh and Bernd Sturmfels for fruitful suggestions and questions on the content of this work. We also thank Mateusz Micha\l{}ek and Emre Sert\"oz for pointing out an error in an earlier version, as well as the anonymous referee for catching mistakes and improving the presentation of this manuscript. The first author acknowledges support from the LMS through a Grace Chisholm Young fellowship. The second author acknowledges support by the NSF grant DMS-0943832.


\bibliographystyle{plain}
\bibliography{references_repn_of_tori}

\end{document}